\def\E{\ifmmode{\mathbb E}\else{$\mathbb E$}\fi} 
\def\N{\ifmmode{\mathbb N}\else{$\mathbb N$}\fi} 
\def\R{\ifmmode{\mathbb R}\else{$\mathbb R$}\fi} 
\def\Q{\ifmmode{\mathbb Q}\else{$\mathbb Q$}\fi} 
\def\C{\ifmmode{\mathbb C}\else{$\mathbb C$}\fi} 
\def\H{\ifmmode{\mathbb H}\else{$\mathbb H$}\fi} 
\def\Z{\ifmmode{\mathbb Z}\else{$\mathbb Z$}\fi} 
\def\P{\ifmmode{\mathbb P}\else{$\mathbb P$}\fi} 
\def\T{\ifmmode{\mathbb T}\else{$\mathbb T$}\fi} 
\def\SS{\ifmmode{\mathbb S}\else{$\mathbb S$}\fi} 
\def\DD{\ifmmode{\mathbb D}\else{$\mathbb D$}\fi} 
\newcommand{\del}{\partial}
\newcommand{\Cont}{{\operatorname{Cont}}}
\newcommand{\ben}{\begin{enumerate}}
\newcommand{\een}{\end{enumerate}}
\newcommand{\be}{\begin{equation}}
\newcommand{\ee}{\end{equation}}
\newcommand{\bea}{\begin{eqnarray}}
\newcommand{\eea}{\end{eqnarray}}
\newcommand{\beastar}{\begin{eqnarray*}}
\newcommand{\eeastar}{\end{eqnarray*}}
\newcommand{\bc}{\begin{center}}
\newcommand{\ec}{\end{center}}
\theoremstyle{theorem}
\newtheorem{thm}{Theorem}[section]
\newtheorem{cor}[thm]{Corollary}
\newtheorem{lem}[thm]{Lemma}
\newtheorem{prop}[thm]{Proposition}
\theoremstyle{definition}
\newtheorem{defn}[thm]{Definition}
\newtheorem{rem}[thm]{Remark}
\newtheorem{ques}[thm]{Question}
\newtheorem{exm}[thm]{Example}
\newtheorem{hypo}[thm]{Hypothesis}
\newtheorem{notation}[thm]{\rm\bfseries{Notation}}
\newtheorem*{thm*}{Theorem}
\numberwithin{equation}{section}
\def\R{{\mathbb R}}
\def\Crit{{\hbox{Crit}}}
\def\E{{\mathbb E}}
\def\Z{{\mathbb Z}}
\def\C{{\mathbb C}}
\def\R{{\mathbb R}}
\def\P{{\mathbb P}}
\def\N{{\mathbb N}}
\def\11{{\mathbb I}}
\def\delbar{{\overline \partial}}
\def\C{\mathbb{C}}
\def\Z{\mathbb{Z}}
\def\T{\mathbb{T}}
\def\Q{\mathbb{Q}}
\def\K{\mathbb{K}}
\def\E{\ifmmode{\mathbb E}\else{$\mathbb E$}\fi} 
\def\N{\ifmmode{\mathbb N}\else{$\mathbb N$}\fi} 
\def\R{\ifmmode{\mathbb R}\else{$\mathbb R$}\fi} 
\def\Q{\ifmmode{\mathbb Q}\else{$\mathbb Q$}\fi} 
\def\C{\ifmmode{\mathbb C}\else{$\mathbb C$}\fi} 
\def\H{\ifmmode{\mathbb H}\else{$\mathbb H$}\fi} 
\def\Z{\ifmmode{\mathbb Z}\else{$\mathbb Z$}\fi} 
\def\P{\ifmmode{\mathbb P}\else{$\mathbb P$}\fi} 
\def\SS{\ifmmode{\mathbb S}\else{$\mathbb S$}\fi} 
\def\DD{\ifmmode{\mathbb D}\else{$\mathbb D$}\fi} 
\def\R{{\mathbb R}}
\def\Crit{{\hbox{Crit}}}
\def\E{{\mathbb E}}
\def\Z{{\mathbb Z}}
\def\C{{\mathbb C}}
\def\R{{\mathbb R}}
\def\N{{\mathbb N}}
\def\delbar{{\overline \partial}}
\def\CA{{\mathcal A}}
\def\CC{{\mathcal C}}
\def\CD{{\mathcal D}}
\def\CE{{\mathcal E}}
\def\CF{{\mathcal F}}
\def\CH{{\mathcal H}}
\def\CL{{\mathcal L}}
\def\CM{{\mathcal M}}
\def\CW{{\mathcal W}}
\def\CZ{{\mathcal Z}}
\def\LCI{\text{\rm LCI}}
\def\grad#1{\,\nabla\!_{{#1}}\,}
\def\darr#1{\raise1.5ex\hbox{$\leftrightarrow$}
\mkern-16.5mu #1}
\def\roughly#1{\raise.3ex\hbox{$#1$\kern-.75em
\lower1ex\hbox{$\sim$}}}
\def\opname#1{\mathop{\kern0pt{\rm #1}}\nolimits}
\def\Im{\opname{Im}}
\def\dim{\opname{dim}}
\def\grad{\opname{grad}}
\def\supp{\operatorname{supp}}
\def\Reeb{\operatorname{Reeb}}
\def\Aut{\operatorname{Aut}}
\def\span{\operatorname{span}}
\def\Cont{\operatorname{Cont}}
\def\Crit{\operatorname{Crit}}
\def\Spec{\operatorname{Spec}}
\def\Sing{\operatorname{Sing}}
\def\GFQI{\frak{G}}
\def\Index{\operatorname{Index}}
\def\Image{\operatorname{Image}}
\def\ev{\operatorname{ev}}
\def\Lag{\operatorname{Lag}}
\begin{document}

\quad \vskip1.375truein

\def\mq{\mathfrak{q}}
\def\mp{\mathfrak{p}}
\def\mH{\mathfrak{H}}
\def\mh{\mathfrak{h}}
\def\ma{\mathfrak{a}}
\def\ms{\mathfrak{s}}
\def\mm{\mathfrak{m}}
\def\mn{\mathfrak{n}}
\def\mz{\mathfrak{z}}
\def\mw{\mathfrak{w}}
\def\Hoch{{\tt Hoch}}
\def\mt{\mathfrak{t}}
\def\ml{\mathfrak{l}}
\def\mT{\mathfrak{T}}
\def\mL{\mathfrak{L}}
\def\mg{\mathfrak{g}}
\def\md{\mathfrak{d}}
\def\mr{\mathfrak{r}}
\def\Cont{\operatorname{Cont}}
\def\Crit{\operatorname{Crit}}
\def\Spec{\operatorname{Spec}}
\def\Sing{\operatorname{Sing}}
\def\GFQI{\text{\rm GFQI}}
\def\Index{\operatorname{Index}}
\def\Cross{\operatorname{Cross}}
\def\Ham{\operatorname{Ham}}
\def\Fix{\operatorname{Fix}}
\def\Graph{\operatorname{Graph}}
\def\LCIH{\operatorname{LCIH}}
\def\LCIC{\operatorname{LCIC}}
\def\Ob{\mathfrak{Ob}}
\def\Mor{\mathfrak{Mor}}
\def\co{\colon\thinspace}
\def\PO{\mathfrak{PO}}

\newcommand{\id}{{\bf 1}}

\title[Rational contact instantons]
{Rational contact instantons and Legendrian Fukaya category}
\author{Yong-Geun Oh}
\address{Center for Geometry and Physics, Institute for Basic Science (IBS),
77 Cheongam-ro, Nam-gu, Pohang-si, Gyeongsangbuk-do, Korea 790-784
\& POSTECH, Gyeongsangbuk-do, Korea}
\email{yongoh1@postech.ac.kr}

\begin{abstract} This is the first of a series of papers in preparation
on the Fukaya-type $A_\infty$
category generated by \emph{tame} Legendrian submanifolds, called the 
\emph{Legendrian contact instanton Fukaya category} 
(abbreviated as the Legendrian CI Fukaya category) and its applications to contact dynamics and topology. 
In the present paper, we give the construction of an $A_\infty$ category whose 
objects are Legendrian links and whose structure maps are
defined by the moduli spaces of \emph{finite energy contact instantons} on
tame contact manifolds in the sense of \cite{oh:entanglement1}. 
In a sequel \cite{kim-oh:long-exact}, jointed by Jongmyeong Kim, 
we will explain the relationships with various previous results in the literature
concerning Rabinowitz Fukaya categories \cite{CF,CFO}, \cite{ganatra-gao-venkat}, \cite{BJK}
on the Liouville manifolds with ideal boundary of  contact manifolds, and the Floer theory of 
Lagrangian cobordism \cite{CDGG}, \cite{EES:orientation}. 
\end{abstract}

\date{Nov 8, 2024}

\keywords{Contact manifolds, Legendrian submanifolds, Contact instantons, 
iso-speed Reeb chords, LCI category
}
\subjclass[2010]{Primary 53D42; Secondary 58J32}

\maketitle

\tableofcontents

\section{Introduction}

Let $(M,\xi)$ be an oriented contact manifold equipped with compatible coorientation, i.e.,
the volume form $\mu_\lambda: = \lambda \wedge (d\lambda)^n$ is compatible with the
given orientation of $M$. We
consider the (time-dependent) contact triad
$$
(M,\lambda, J), \quad J = \{J_t\}_{t \in [0,1]}.
$$
In particular we 
Let $H = H(t,x)$ be a time-dependent Hamiltonian. The following elliptic boundary value problem
for maps $u: \R \times [0,1] \to M$
\be\label{eq:perturbed-contacton-intro}
\begin{cases}
(du - X_H \otimes dt)^{\pi(0,1)} = 0, \quad d(e^{g_H(u)}(u^*\lambda + H\, dt)\circ j) = 0\\
u(\tau,0) \in R, \quad u(\tau,1) \in R
\end{cases}
\ee
is introduced in \cite{oh:contacton-Legendrian-bdy,oh:entanglement1}. Here
the function $g_H: \R\times [0,1] \to \R$ is defined by
\be\label{eq:gHu-intro}
g_H(t,x) := g_{(\psi_H^1 (\psi_H^t)^{-1})^{-1}}(u(t,x))
\ee
where $g_\psi$ is the \emph{conformal exponent} of the contactomorphism defined by
the equation $\psi^*\lambda = e^{g_\psi}\lambda$ for positive contactomorphisms $\psi$.

In \cite{oh:entanglement1}, \cite{oh-yso:spectral} and \cite{oh:shelukhin-conjecture},
we develop the theory for the study of contact
dynamics which concerns \emph{entanglement} of Legendrian links which relies on
the study of the moduli spaces of Hamiltonian-perturbed \emph{contact instantons
under the Legendrian boundary condition}, and make its application to a quantitative study of contact dynamics.
The heart of the matter lies in the quantitative study of the moduli space of
perturbed contact instantons and its interplay
with the contact Hamiltonian geometry and calculus. In particular, we have 
constructed the Legendrian contact instanton cohomology (abbreviated as Legendrian CI cohomology
from now on) in \cite{oh-yso:spectral} for the 1-jet bundle and in \cite{oh:entanglement1} 
for the case with small energy, i.e., for the case without bubbling.

All necessary analytical foundations, in the more general setting of \emph{perturbed contact
instantons}, such as a priori elliptic estimates, Gromov-Floer-Hofer style convergence, 
relevant Fredholm theory and the generic mapping and evaluation transversalities
are already established in \cite{oh:entanglement1}, \cite{oh:entanglement1},
\cite{oh:contacton-gluing}, \cite{oh:perturbed-contacton} and
\cite{oh:contacton-transversality} which
 will not be repeated in the present paper but
referred thereto. Only the essential components are proved and
others are summarized in Appendix for readers' convenience and for the self-containedness of
the paper. Frequently we will be brief
leaving only the core of the argument
when we mention these analytical details. We also
use the same notations and terminologies therefrom.
(We refer readers to the survey article 
\cite{oh-kim:survey} for some coherent exposition of these
analytic foundations of (perturbed) contact instantons,
 and with some comparison with the framework of
 pseudoholomorphic curves on the symplectization.)

The main goals of our study in the present paper is to continue the study to the general case 
including the bubbles and 
to construct a Fukaya-type category of Legendrian submanifolds. 

The present paper focus on the case of unperturbed contact instantons (i.e., the case of $H = 0$)
but generalizing the domain $\R \times [0,1]$ of the maps $u$ to
the disc-type surface $\dot \Sigma$ with boundary punctures. We postpone the study of the case $H \neq 0$ 
till \cite{kim-oh:long-exact} and others.

 We  follow the off-shell analytical framework of \cite{oh:contacton-transversality} which is in turn an
 adaptation thereof from  \cite{oh:contacton}, \cite{oh-savelyev}  to the current
 boundary value problem 
 \be\label{eq:contacton-Legendrian-bdy}
\begin{cases}
\delbar^\pi u = 0, \quad d(u^*\lambda \circ j) = 0,\\
u(\overline{z_iz_{i+1}}) \subset \mathsf R
\end{cases}
\ee 
where we denote by $\mathsf R = \sqcup R_{j=1}^k$ a Legendrian link following the notation from
\cite{oh-yso:index}.  

We recall the setting of our conventions and basic notations which 
are not the same as those appearing in the literature concerning the Hamiltonian Floer theory 
of symplectic manifolds with contact ideal boundary. These conventions are continuations
of those listed as `List of conventions' in the present author's book \cite{oh:book1} and of \cite{dMV}.

\bigskip

\noindent{\bf Conventions and Terminologies:}

\medskip

\begin{itemize}
\item {(Symplectic Hamiltonian vector field)} 
The Hamiltonian vector field on symplectic manifold $(P, \omega)$ is defined by $X_H \rfloor \omega = dH$.
We denote by
$$
\phi_H: t\mapsto \phi_H^t
$$
its Hamiltonian flow.
\item {(Contact Hamiltonian)} The contact Hamiltonian of a time-dependent contact vector field $X_t$ is
given by
$$
H: = - \lambda(X_t).
$$
We denote by $X_H$ the contact vector field whose associated contact Hamiltonian is given by $H = H(t,x)$, and its flow by $\psi_H^t$.
\item We denote by $R_\lambda$ the Reeb vector field associated to $\lambda$. We denote by
$$
\phi_{R_\lambda}^t
$$
its flow. Then we have $\phi_{R_\lambda}^t  = \psi_{-1}^t$, i.e., \emph{the Reeb flow is the contact Hamiltonian
flow of the constant Hamiltonian $H = -1$}.
\end{itemize}

\bigskip

\noindent{\bf Acknowledgement:} We thank Jongmyeong Kim for some comments on the 
preliminary version of the present paper.

\section{Iso-speed Reeb chords and their nondegeneracy}

We start with recalling the following standard definition.

\begin{defn}\label{defn:spectrum} Let $\lambda$ be a contact form of contact manifold $(M,\xi)$ and $R \subset M$ a
connected Legendrian submanifold.
Denote by $\frak{X}(M,\lambda)$ (resp. $\frak{X}(M,R;\lambda)$) the set of closed Reeb orbits
(resp. the set of self Reeb chords of $R$).
\begin{enumerate}
\item
We define $\operatorname{Spec}(M,\lambda)$ to be the set
$$
\operatorname{Spec}(M,\lambda) = \left\{\int_\gamma \lambda \mid \lambda \in \frak{X}(M,\lambda)\right\}
$$
and call the \emph{action spectrum} of $(M,\lambda)$.
\item We define the \emph{period gap} to be the constant given by
$$
T(M,\lambda): = \inf\left\{\int_\gamma \lambda \mid \lambda \in \frak{X}(M,\lambda)\right\} > 0.
$$
\end{enumerate}
We define $\operatorname{Spec}(M,R;\lambda)$ and the associated $T(M,\lambda;R)$ similarly using the set
$\frak{X}(M,R;\lambda)$ of Reeb chords of $R$.
\end{defn}
We set $T(M,\lambda) = \infty$ (resp. $T(M,\lambda;R) = \infty$) if there is no closed Reeb orbit (resp. no $(R_0,R_1)$-Reeb chord).
Then we define
\be\label{eq:TMR}
T_\lambda(M;R): = \min\{T(M,\lambda), T(M,\lambda;R)\} > 0
\ee
and call it the \emph{(chord) period gap} of $R$ in $M$.

Consider a 2-component link, i.e., a pair of \emph{connected} Legendrian submanifolds $(R, R')$.

\begin{defn}\label{defn:generators} We denote by
$$
\frak{X}(R,R')
$$
the set of nonnegative iso-speed Reeb chords, i.e., the set of pairs $(\gamma, T)$
satisfying
\be\label{eq:chord-equation}
\begin{cases}
\dot \gamma(t) = T R_\lambda(\gamma(t)),\\
\gamma(0) \in R, \quad \gamma(1) \in R'.
\end{cases}
\ee
\end{defn}
We note that there is a natural involution 
$\tau: \frak{X}(R,R') \to \frak{X}(R',R)$ defined by
\be\label{eq:tau}
\tau(\gamma,T): = (\widetilde \gamma, -T)
\ee
where $\widetilde \gamma$ is the time-reversal of $\gamma$ defined by $\widetilde \gamma(t): = \gamma(1-t)$.

It is useful for us to recall the following notion of
the \emph{Reeb trace} introduced in \cite{oh:entanglement1}.

\begin{defn}[Reeb trace] Let $R$ be a Legendrian submanifold of $(M,\lambda)$.
The \emph{Reeb trace} denoted by $Z_R$ is the union
$$
Z_R: = \bigcup_{t \in \R} \phi_{R_\lambda}^t(R).
$$
\end{defn}


The following is proved in \cite[Appendix B]{oh:contacton-transversality}, which is the
relative verison of a theorem in \cite{ABW}.

\begin{thm}[Theorem B.3 \cite{oh:contacton-transversality}]\label{thm:Reeb-chords-lambda-text}
Let $(M,\xi)$ be a contact manifold. Let  $(R_0,R_1)$ be a pair of Legendrian submanifolds
allowing the case $R_0 = R_1$.  There
exists a residual subset $\operatorname{Cont}^{\text{\rm reg}}_1(M,\xi) \subset \CC(M,\xi)$
such that for any $\lambda \in \operatorname{Cont}^{\text{\rm reg}}_1(M,\xi)$ all
Reeb chords from $R_0$ to $R_1$ are nondegenerate for $T > 0$, and
Bott-Morse nondegenerate when $T = 0$.
\end{thm}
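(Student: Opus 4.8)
The plan is to prove this as a parametrized transversality (Sard--Smale) statement, viewing nondegeneracy of Reeb chords as a regularity condition on the defining section of an infinite-dimensional bundle, parametrized by the contact form $\lambda$. First I would set up the universal moduli problem. Fix a large Banach space (or Fr\'echet, then pass to a Banach completion / use Floer's $C^\varepsilon$ trick) of contact forms $\lambda = f\lambda_0$ near a reference form, and for $T > 0$ consider the space of pairs $(\gamma, \lambda)$ where $\gamma \in W^{1,p}([0,1], M)$ satisfies the boundary conditions $\gamma(0) \in R_0$, $\gamma(1) \in R_1$. The chord equation $\dot\gamma(t) = T R_\lambda(\gamma(t))$ — with $T$ either fixed or allowed to vary, the latter being the iso-speed version — is then the zero set of a Fredholm section $\mathcal{S}(\gamma, T, \lambda) = \dot\gamma - T R_\lambda(\gamma)$ of an appropriate Banach bundle over this configuration space. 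Nondegeneracy of a chord $(\gamma, T)$ for a fixed $\lambda$ is exactly the statement that the linearization of $\mathcal{S}(\cdot, \cdot, \lambda)$ (the fixed-$\lambda$ section) at $(\gamma, T)$ is surjective, equivalently that the linearized Reeb flow $d\phi_{R_\lambda}^T$ restricted to $T_{\gamma(0)}R_0$ intersects $T_{\gamma(1)}R_1$ transversally inside $T_{\gamma(1)}M$ (after correcting by the Reeb direction for the iso-speed/period parameter).

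The key step is to verify that the universal section $\mathcal{S}$ is submersive along its zero set, i.e. that the full linearization, including the variation in $\lambda$, is surjective. Here one uses that varying $\lambda = f\lambda_0$ changes the Reeb vector field $R_\lambda$ in a sufficiently flexible way: given a chord $\gamma$ and a nonzero cokernel element $\eta$ (an element of the dual, supported along $\gamma$, satisfying the formal adjoint equation, hence nowhere vanishing by unique continuation for the linearized ODE), I would produce an infinitesimal variation $\dot\lambda = h\lambda_0$ — with $h$ supported near an interior point $t_\ast \in (0,1)$ where $\gamma$ is an embedding — whose induced variation $\delta R$ of the Reeb field pairs nontrivially with $\eta$ at that point. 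Concretely, the derivative of $R_\lambda$ in the direction of a conformal change has both a Reeb-direction component and a $\xi$-direction component that can be prescribed essentially arbitrarily near a regular point of $\gamma$, which gives surjectivity of $D\mathcal{S}$; this is the heart of the ABW-type argument and the part that must be adapted to the \emph{relative} (Legendrian boundary) setting. Then the universal zero set $\mathcal{M} = \mathcal{S}^{-1}(0)$ is a Banach manifold, the projection $\pi\colon \mathcal{M} \to \mathcal{C}(M,\xi)$ is Fredholm, and Sard--Smale gives a residual set $\operatorname{Cont}^{\mathrm{reg}}_1(M,\xi)$ of regular values; for $\lambda$ in this set, every chord with $T > 0$ is nondegenerate. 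One caveat is that $\mathcal{M}$ may fail to be second-countable if the form space is too large, so one either works with a countable exhaustion by compact subsets of $M$ and of the action window, or invokes the standard separability fix; since $M$ is fixed and chords of bounded action below any fixed threshold form a compact set, intersecting countably many residual sets indexed by action thresholds yields the full residual set.

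For the $T = 0$ case the chords are exactly the constant paths at points of $R_0 \cap R_1$ (when $R_0 \ne R_1$) or all of $R$ (when $R_0 = R_1$), and these cannot be made nondegenerate in the naive sense — hence the Bott--Morse formulation. Here I would argue that for generic $\lambda$ (in fact for \emph{any} $\lambda$ after a generic $C^\infty$-small Legendrian perturbation, but since we are perturbing $\lambda$ not $R$, one perturbs $\lambda$ to make the intersection clean) the intersection $R_0 \cap R_1$ is a clean intersection and the critical submanifold is Bott--Morse nondegenerate, meaning the kernel of the linearized chord operator at a constant chord equals the tangent space to the critical manifold. When $R_0 = R_1 = R$, the critical manifold is $R$ itself sitting inside $\mathfrak{X}(R,R)$ as the constant chords, and Bott--Morse nondegeneracy there is automatic from the fact that $\ker = T_{\gamma(0)}R$ with no extra Jacobi fields at $T = 0$; when $R_0 \pitchfork R_1$ transversally as Legendrians the $T=0$ chords are already isolated and nondegenerate. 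The delicate point, which I expect to be the main obstacle overall, is the surjectivity-of-$D\mathcal{S}$ step in the relative setting: one must ensure the supporting point $t_\ast$ can be chosen in the open interval so that the cokernel element is genuinely detected there, and that the conformal variations of $\lambda$ (as opposed to variations of the full contact distribution) already suffice — both are handled in \cite[Appendix B]{oh:contacton-transversality} by the quoted theorem, so in the present paper I would simply cite that result and indicate the minor bookkeeping needed to combine the $T>0$ and $T=0$ statements into the single residual set $\operatorname{Cont}^{\mathrm{reg}}_1(M,\xi)$.
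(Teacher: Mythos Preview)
Your proposal is correct and aligns with the paper's own treatment: the paper does not prove this theorem in full but cites it from \cite[Appendix~B]{oh:contacton-transversality}, noting only that the proof is the relative (Legendrian-boundary) version of the closed-orbit argument in \cite{ABW} and that it rests on Proposition~\ref{prop:nondeneracy-chords-text} (nondegeneracy $\Leftrightarrow$ surjectivity of $d_{(\gamma,T)}\Phi_\lambda$ for the section $\Phi(T,\gamma,\lambda)=\dot\gamma-T\,R_\lambda(\gamma)$). Your Sard--Smale setup with the universal section $\mathcal S$, the cokernel-killing via conformal variations $h\lambda_0$ localized near an embedded interior point of $\gamma$, and the separate Bott--Morse discussion at $T=0$ are exactly the ingredients that proof uses, and you correctly anticipate at the end that in the present paper one simply cites the external result.
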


We consider the Moore path space as the set
$$
\Theta(M;R,R'): = \left\{ (\gamma,T) \, \Big\vert\, \gamma: [0,1] \to M,\,  \gamma(0) \in R, \, \gamma(1) \in R',
\quad \int \gamma^*\lambda = T \right \}.
$$
Recall in our definition that the set
$\mathfrak{X}(R,R'): = \mathfrak{X}(R,R';\lambda)$ consists of iso-speed Reeb chords, i.e., the
set of pairs  $(\gamma, T) \in \Theta(M;R,R')$ satisfying the equation
$$
\dot \gamma = T R(\lambda), \quad T = \int \gamma^*\lambda.
$$
We consider the assignment
\be\label{eq:Phi}
\Phi: (T,\gamma,\lambda) \mapsto \dot \gamma - T \,R_\lambda(\gamma)
\ee
as a section of the  vector bundle over
$$
(0,\infty) \times \Theta(M;R,R') \times \Cont(M,\xi).
$$
 We have
$$
\dot \gamma - T\, R_\lambda(\gamma)
\in \Gamma(\gamma^*TM; T_{\gamma(0)}R_0, T_{\gamma(1)}R_1).
$$
We  define the vector bundle
$$
\CH(R_0,R_1) \to (0,\infty) \times \Theta(M;R_0,R_1) \times \Cont(M,\xi)
$$
whose fiber at $(T,\gamma,\lambda)$ is $\Gamma(\gamma^*TM)$. We denote by
$\pi_i$, $i=1,\, 2, \, 3$ the corresponding projections as before.

We denote $\frak{X}(M,\lambda;R_0,R_1) = \Phi_\lambda^{-1}(0)$,
where
$$
\Phi_\lambda: = \Phi|_{ (0,\infty) \times \Theta(M;R_0,R_1) \times \{\lambda\}}.
$$
Then we have
$$
\mathfrak{X}(R_0,R_1;\lambda) =  \Phi_\lambda^{-1}(0) = \frak{X}(M,\xi) \cap \pi_3^{-1}(\lambda).
$$
The proof of Theorem \ref{thm:Reeb-chords-lambda-text}
relies on the following proposition which is also
 proved in \cite[Appendix B]{oh:contacton-transversality}.

\begin{prop} [Proposition B.4 \cite{oh:contacton-transversality}]
\label{prop:nondeneracy-chords-text}
Suppose $R_0 \cap R_1 = \emptyset$.
A Reeb chord $(\gamma, T)$ of $(R_0,R_1)$ is nondegenerate if and only if
the linearization
$$
d_{(\gamma, T)}\Phi: \R \times \Gamma\left(\gamma^*TM;T_{\gamma(0)}R_0,T_{\gamma(1)}R_1\right)
\to \Gamma(\gamma^*TM)
$$
is surjective.
\end{prop}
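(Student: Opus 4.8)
The plan is to identify the nondegeneracy of a Reeb chord $(\gamma,T)$ with the invertibility of a suitable linearized operator, and then to account for the extra $\R$-direction coming from the period parameter $T$. First I would recall the precise meaning of nondegeneracy for an iso-speed Reeb chord: writing $\phi^t := \phi^t_{R_\lambda}$ for the Reeb flow, the pair $(\gamma,T)$ corresponds to a point $\gamma(0)\in R_0$ with $\phi^T(\gamma(0))\in R_1$, and nondegeneracy means precisely the transversality
\be
d\phi^T\big(T_{\gamma(0)}R_0\big) \,\pitchfork\, T_{\gamma(1)}R_1 \quad\text{inside } T_{\gamma(1)}M,
\ee
equivalently $d\phi^T\big(T_{\gamma(0)}R_0\big) + T_{\gamma(1)}R_1 = T_{\gamma(1)}M$ (the intersection is forced to contain $R_\lambda(\gamma(1))$, so when $R_0\cap R_1=\emptyset$ one expects this intersection to be exactly one-dimensional, spanned by the Reeb direction). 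The hypothesis $R_0\cap R_1=\emptyset$ is what guarantees $T>0$ and lets us work on the open stratum $(0,\infty)$ where $\Phi$ is a genuine section of a vector bundle with Legendrian boundary conditions built into the fiber.

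Next I would compute the linearization $d_{(\gamma,T)}\Phi$ explicitly. Splitting the domain $\R\times\Gamma(\gamma^*TM;T_{\gamma(0)}R_0,T_{\gamma(1)}R_1)$, the derivative in the vector-field variable $\eta$ gives the Jacobi-type operator
\be
\eta \;\longmapsto\; \frac{\nabla\eta}{dt} - T\,\nabla_\eta R_\lambda,
\ee
a first-order linear ODE operator along $\gamma$ with boundary conditions $\eta(0)\in T_{\gamma(0)}R_0$, $\eta(1)\in T_{\gamma(1)}R_1$, while the derivative in the $T$-direction is the vector $-R_\lambda(\gamma(\cdot))$ (a genuine element of the fiber, with no boundary constraint since $R_\lambda$ is everywhere transverse to neither $R_i$ but its restriction at the endpoints need not lie in $T R_i$). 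The core analytic fact is that this ODE operator is Fredholm of index $0$ between the appropriate Sobolev completions, because the homogeneous equation $\nabla\eta/dt = T\nabla_\eta R_\lambda$ is solved exactly by pushing a vector in $T_{\gamma(0)}M$ forward under $d\phi^t$; hence the kernel of the operator $\eta\mapsto \nabla\eta/dt - T\nabla_\eta R_\lambda$ \emph{with} the two Legendrian boundary conditions is canonically identified with $d\phi^T(T_{\gamma(0)}R_0)\cap T_{\gamma(1)}R_1$, and by the index computation its cokernel has the same dimension plus or minus the contribution of the $T$-slot.

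Then I would assemble the equivalence. Since the pure-$\eta$ operator has index $0$, the full operator $d_{(\gamma,T)}\Phi$ on $\R\times\Gamma(\dots)$ has index $1$; its kernel always contains the pair arising from reparametrization/the Reeb direction, so surjectivity of $d_{(\gamma,T)}\Phi$ is equivalent to the kernel being exactly one-dimensional, which unwinds to $d\phi^T(T_{\gamma(0)}R_0)\cap T_{\gamma(1)}R_1$ being spanned by $R_\lambda(\gamma(1))$ alone — and that is exactly the nondegeneracy condition. Conversely, if $(\gamma,T)$ is degenerate there is an extra Jacobi field tangent to both Legendrians, producing a kernel element not accounted for by the $T$-direction, so $d_{(\gamma,T)}\Phi$ fails to be surjective. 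The main obstacle, and the step needing care rather than routine bookkeeping, is the precise handling of the period parameter: one must check that the image of $\partial/\partial T$ under $d\Phi$ is genuinely independent modulo the image of the pure-$\eta$ operator (so that it "uses up" the index-$1$ slack exactly), and that the ODE operator with the nonlocal pair of Legendrian boundary conditions is indeed Fredholm of index $0$ — both of which I would cite from the Fredholm package of \cite{oh:contacton-transversality} and reduce to the transversality statement displayed above.
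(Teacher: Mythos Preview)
The paper does not give its own proof of this proposition; it merely quotes the statement from \cite[Proposition~B.4]{oh:contacton-transversality}. So there is no argument to compare against except for correctness, and your proposal contains two genuine errors that happen to cancel.

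First, you assert that the intersection $d\phi^T\bigl(T_{\gamma(0)}R_0\bigr)\cap T_{\gamma(1)}R_1$ is ``forced to contain $R_\lambda(\gamma(1))$'' and hence that the kernel of $d_{(\gamma,T)}\Phi$ always contains a reparametrization pair. This is false: $R_0,R_1$ are Legendrian, so $T_{\gamma(i)}R_i\subset\xi_{\gamma(i)}$, whereas $R_\lambda$ is transverse to $\xi$. The Reeb vector is therefore \emph{never} tangent to either $R_i$, and your candidate kernel element $\eta(t)=at\,R_\lambda(\gamma(t))$ violates the boundary condition at $t=1$. (You may be importing intuition from closed Reeb orbits, where reparametrization does contribute a kernel element.) Second, the pure-$\eta$ ODE operator has Fredholm index $n+n-(2n+1)=-1$, not $0$; consequently the full operator on $\R\times\Gamma(\dots)$ has index $0$, not $1$.

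With these two corrections the argument runs cleanly. The general solution of the linearized equation with $\eta(0)=v_0$ is $\eta(t)=d\phi^{Tt}(v_0)+at\,R_\lambda(\gamma(t))$; since the Reeb flow preserves $\xi$, one has $d\phi^{T}(v_0)\in\xi_{\gamma(1)}$, so imposing $\eta(1)\in T_{\gamma(1)}R_1\subset\xi_{\gamma(1)}$ forces $a=0$ and then $d\phi^T(v_0)\in T_{\gamma(1)}R_1$. Thus $\ker d_{(\gamma,T)}\Phi \cong d\phi^T\bigl(T_{\gamma(0)}R_0\bigr)\cap T_{\gamma(1)}R_1$. Because the full operator has index $0$, surjectivity is equivalent to this kernel vanishing, which is exactly the nondegeneracy condition $d\phi^T\bigl(T_{\gamma(0)}R_0\bigr)\pitchfork T_{\gamma(1)}R_1$ in $\xi_{\gamma(1)}$.
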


Now we consider a \emph{general Legendrian link} $\vec R = (R_1, \ldots, R_k)$ be given. 
We consider a boundary marked bordered Riemann surface 
$
(\Sigma, \{z_1, \cdots, z_m\})
$
and decompose the boundary of $\Sigma$ into
$$
\del \Sigma = \sqcup_{j=1}^h \del_j \Sigma
$$
the union of connected components $\del_j \Sigma \cong S^1$.
We then take its associated  punctured bordered Riemann surface 
$$
\dot \Sigma: = \Sigma \setminus \{z_1, \cdots, z_m\}.
$$
\begin{defn}\label{defn:nondegeneracy-chords}
We say a Reeb chord $(\gamma, T)$ of $(R_0,R_1)$ is nondegenerate if
the linearization map $\Psi_\gamma = d\phi^T(p): \xi_{\gamma(0)} \to \xi_{\gamma(1)}$ satisfies
$$
\Psi_\gamma(T_{\gamma(0)} R_0) \pitchfork T_{\gamma(1)} R_1  \quad \text{\rm in }  \,  \xi_{\gamma(1)}
$$
or equivalently
$$
\Psi_\gamma(T_{\gamma(0)} R_0) \pitchfork T_{\gamma(1)} Z_{R_1} \quad \text{\rm in} \, T_{\gamma(1)}M.
$$
\end{defn}
More generally, we consider the following situation.
We recall the definition of \emph{Reeb trace} $Z_R$ of a Legendrian submanifold
$$
Z_R: = \bigcup_{t \in \R} \phi_{R_\lambda}^t(R).
$$
(See \cite[Appendix B]{oh:contacton-transversality} for detailed discussion on its genericity.)
\begin{hypo}[Nondegeneracy]\label{hypo:nondegeneracy}
Let $\vec{R}=(R_1,\cdots,R_k)$ be a chain of Legendrian submanifolds.
We assume that we have
$$
Z_{R_i} \pitchfork R_j
$$
for all $i \neq  j= 1,\ldots, k$.
\end{hypo}
Here $\phi^t_{R_\lambda}$ is the flow generated by the Reeb vector field $R_\lambda$.
We note that this nondegeneracy is equivalent the nondegeneracy of Reeb chords between $R_i$ and $R_j$.
(See \cite[Corollary 4.8]{oh:entanglement1}.)

\section{Bridged Legendrian links and grading of Reeb chords}
\label{sec:anchored}

In this section and the next, we provide the off-shell framework of
the contact instantons attached to a Legendrian link in general by closely following the exposition of 
\cite[Par 2]{oh-yso:index}.

Let $\vec R = \{R_1, \cdots, R_k\}$ be a Legendrian link consisting
of connected Legendrian submanifolds $R_i$. We denote by
$$
\mathsf R = \bigsqcup_{i=1}^k R_i
$$
its disjoint union.

\begin{defn}\label{defn:Reeb-chords-intro} 
Let $(\gamma, T)$ be an iso-speed Reeb chord with $\gamma:[0,1] \to M$ satisfying
\be\label{eq:isospeed-Reeb}
\begin{cases}
\dot \gamma = T R_\lambda(\gamma(t)),\\
\gamma(0), \, \gamma(1) \in \mathsf R.
\end{cases}
\ee
 We denote by
$
\mathfrak{X}(M,\vec R;\lambda) 
$
the set of iso-speed $\lambda$-Reeb chords of the Legendrian link $\vec R$.
\end{defn}

We have the decomposition
\be\label{eq:Reeb-decompose}
\mathfrak{X}(M,\vec R;\lambda) = \bigcup_{(i,j) \in \underline{k} \times \underline{k}} 
\mathfrak{X}(R_i,R_j;\lambda).
\ee
By definition, each Reeb chord $\gamma$ satisfies
$$
\gamma(0) \in R_i, \quad \gamma(1) \in R_j
$$
respectively for some pair $i, \, j \in \underline k$ allowing the case $i = j$.

\begin{defn}\label{defn:gamma01} We
have two maps $s, \, t: \frak{X}(M,\vec R;\lambda) \to \{1, \ldots, k\}=:\underline{k}$ the values of which
are defined to be   $s(\gamma): = i, \quad t(\gamma) : = j$
when $\gamma$ satisfies 
\be\label{eq:gamma01}
\gamma(0) \in R_i, \quad \gamma(1) \in R_j.
\ee
We call the associated components the \emph{source} and the \emph{target} component
respectively.
\end{defn}

In terms of this definition, we can rephrase the definitions of self-chords and trans-chords 
from \cite{oh:entanglement1} as follows.

\begin{defn} We say an iso-speed Reeb chord $(\gamma,T)$ a \emph{self-chord} 
if $s(\gamma) = t(\gamma)$
and a \emph{trans-chord} otherwise.
\end{defn}
With this definition, we have another decomposition
\be\label{eq:Reeb-decompose-self-trans}
\mathfrak{X}(M,\vec R;\lambda) = \mathfrak{X}^{\text{\rm self}}(M,\vec R;\lambda)
\cup \mathfrak{X}^{\text{\rm trans}}(M,\vec R;\lambda)
\ee
where we have further decompositions
$$
\mathfrak{X}^{\text{\rm self}}(M,\vec R;\lambda) =  \bigcup_{i \in \underline{k}} 
\mathfrak{X}(R_i,R_i;\lambda)
$$
and
$$
\mathfrak{X}^{\text{\rm trans}}(M,\vec R;\lambda) = 
\bigcup_{(i,j) \in \underline{k} \times \underline{k};i \neq j} 
\mathfrak{X}(R_i,R_j;\lambda).
$$
Then by definition, we have
$$
\gamma \in \mathfrak{X}\left(R_{s(\gamma)}, R_{t(\gamma)}\right)
$$
and the decomposition
$$
\vec \gamma = \vec \gamma^{\text{\rm self}} \sqcup \vec \gamma^{\text{\rm trans}}.
$$
For the purpose of computing the Fredholm index of the linearized operator in terms of a topological index, 
we utilize the notion of \emph{bridged Legendrian links} which is an adaptation of that of based Lagrangian pairs studied in \cite{fooo:book1} in symplectic geometry.

We denote by $\Lambda_{(\xi,d\lambda)}= \Lambda_{(\xi,d\lambda)}(M)  \to M$ the bundle of Lagrangian Grassmannians 
of the symplectic vector bundle  $(\xi, d\lambda) \to M$. Its fiber at $x \in M$ is given by 
the set
$$
\Lambda_{(\xi,d\lambda)}|_x = Lag(\xi_x,d\lambda|_x)
$$
of Lagrangian subspaces of  the symplectic vector space $(\xi_x, d\lambda|_x)$.

\begin{defn}[Graded bridges]\label{defn:graded-bridges}
Let $(M,\xi)$ be a contact manifold equipped with a contact form $\lambda$.
\begin{enumerate}
\item Let $(R,R')$ be a pair of
Legendrian submanifolds and $\ell$ be a base path from $R$ to $R'$, which we call a \emph{bridge}
from $R$ to $R'$. A \emph{grading} of $\ell$ is a section of $\ell^*\Lambda_{(\xi,d\lambda)}$ and denote by
$\alpha$.
\item A \emph{graded system of bridges}
is a collection of graded bridges $(\ell_{ij},\alpha_{\ij})$ for each given
pair $R_i, \, R_j \in \vec R$. We denote by $\vec \ell$ the collection $\{\ell_{ij}\}$.
\item A Legendrian link $\vec R = (R_1, \cdots, R_k)$ equipped with a graded  system of bridges
is said to be \emph{graded bridged}.
\end{enumerate}
\end{defn}

\section{Off-shell bordered maps and enumeration of asymptotic Reeb chords}

Next let $(\Sigma, \{z_1, \cdots, z_m\})$ be a compact bordered compact Riemann surface 
with boundary $\del \Sigma$ decomposed into the union
$$
\del \Sigma = \sqcup_{j=1}^h \del_j\Sigma
$$
of its connected components $\del_j \Sigma \cong S^1$. We assume 
 $h \geq 1$ for the number $h$ of boundary components in the present paper.

Then we decompose the set $\vec z$ into the union of
the boundary marked points $\vec z = \sqcup_{j=1}^h \vec z^j$:
For each $j$, we enumerate the subset $\vec z^j$ counterclockwise (with respect to the boundary
orientation of $\del \Sigma$) into 
$$
\vec z^j = \{z^j_1,\ldots, z^j_{s_j}\} \subset \del_j \Sigma
$$
which partitions $\del_j \Sigma$ into the union of arc-segments
\be\label{eq:ejalpha}
\del_j \Sigma = \cup_{\alpha=1}^{s_j} e^j_\alpha, \, \quad e^j_\alpha: = \overline{z^j_\alpha z^j_{\alpha+1}}.
\ee
Then we have
\be\label{eq:m-sum-ellj}
m = \sum_{j=1}^h s_j.
\ee

We have another enumeration 
\be\label{eq:vec-zj}
\vec z^j = \left\{p^+_{j,1},\ldots,p^+_{j,s^+_j},p^-_{j,1},\ldots,p^-_{j,s^-_j}\right\}, \quad s^+_j + s^-_j= s_j
\ee
of the elements of $\vec z^j$ consisting of those of postive punctures and of negative ones.
Then the (boundary) punctured surface
$$
\dot \Sigma = \Sigma \setminus \vec z 
$$
carries the strip-like ends near the punctures on some pairwise disjoint open subsets thereof
\beastar
& &U_{i,\alpha}^+ = \phi_{i,\alpha}^+([0,\infty)\times[0,1])\subset \dot \Sigma \quad {\rm for} \; \alpha=1,\ldots,s^+_j \\
& &U_{j,\beta}^- = \phi_{j,\beta}^-((-\infty,0]\times[0,1])\subset \dot \Sigma \quad {\rm for} \; \beta =1,\ldots,s^-_j
\eeastar
where $\phi_{i,\alpha}^+$'s and $\phi_{j,\beta}^-$'s are bi-holomorphic maps from the corresponding semi-strips 
into $\dot \Sigma$ respectively. We also put
$$
s^+ = \sum_{j = 1}^h s^+_j, \quad s^- = \sum_{j=1}^h s^-_j
$$
which are further decomposed into 
$$
s^\pm = s^\pm_{\text{\rm self}} + s^\pm_{\text{\rm trans}}
$$
by now in the obvious way. Then we have 
\be\label{eq:m-sum-s=-}
m = s^+ + s^- = (s^+_{\text{\rm self}} + s^+_{\text{\rm trans}})
 + (s^-_{\text{\rm self}} + s^-_{\text{\rm trans}}).
\ee

\subsection{Asymptotic boundary conditions at the punctures}

Once the above identification of asymptotic boundary conditions for the linearized operators
is established,
we are ready to provide the precise off-shell function spaces of the moduli space of
contact instantons. 
We will be brief in their off-shell description by referring readers
\cite[Section 11.2]{oh:contacton} for the closed case which can be 
duplicated with incorporation of boundary conditions. 

We consider any smooth map $u:\dot \Sigma \to M$,  not necessarily being a contact instanton,
which satisfies  the boundary condition
\be\label{eq:contacton-Legendrian-bdy}
u(e^j_\alpha) \subset R_j,\quad e^j_\alpha: = \overline{z^j_\alpha z^j_{\alpha+1}}
\ee
where $e^j_\alpha$ is the arc-segment of $\del \dot \Sigma$ given in \eqref{eq:ejalpha}.

Under the nondegeneracy conditions given in Hypothesis \ref{hypo:nondegeneracy}, 
there exist two collections of Reeb chords
\beastar
\underline \gamma_i & = & \{\gamma_{i,1}^+,\cdots, \gamma^+_{i,s^+_j}\} \\
\overline \gamma_j & = & \{\gamma_{j,1}^-,\cdots, \gamma^-_{j,s^-_j}\}
\eeastar
for $i,\, j = 1, \ldots, h$ at the positive and at the negative punctures of $\del \dot \Sigma$ respectively 
such that $u$ satisfies
\be\label{eq:u-bdy-condition}
u(z) \in R_i \quad \text{ for } \, z \in e^i_\alpha \subset \del_i \dot \Sigma.
\ee
Furthermore at the punctures $p^+_{i,\alpha}$ and $p^-_{j,\beta}$, $u$ satisfies
 the asymptotic condition 
\be\label{eq:asymp-condition}
\begin{cases}
\lim_{\tau \to \infty}u((\tau,t)_{i,\alpha}) = \gamma^+_{i,\alpha}(T_{i,\alpha}(t+t_{i,\alpha})), \\
\lim_{\tau \to - \infty}u((\tau,t)_{j,\beta}) = \gamma^-_{j,\beta}(T_{j,\beta}(t-t_{j,\beta}))
\end{cases}
\ee
for some $\gamma^+_{i,\alpha}$ and $\gamma^-_{j,\beta}$ and
$t_{i,\alpha}, \, t_{j,\beta} \in \del_j\Sigma$ respectively.
Here $t_{i,\alpha},\, t_{j,\beta}$ depend on the given strip-like coordinate, and
$$
T_{i,\alpha} = \int (\gamma^+_{i,\alpha})^*\lambda, \, 
T_{j,\beta} = \int ( \gamma^-_{j,\beta})^*\lambda
$$
are the periods of relevant Reeb chords.

\begin{defn}[Off-shell bordered maps]\label{defn:offshell-borderedmap}
We call a map satisfying the boundary condition 
\eqref{eq:u-bdy-condition}, \eqref{eq:asymp-condition} an \emph{off-shell bordered map
attached to the Legendrian link $\vec R$}.
\end{defn}  

\subsection{Symplectic bundle pair associated to asymptotic Reeb chords}

Let $(R, R'; \ell)$ be a 2-component bridged Legendrian link  that satisfies
transversality stated in Hypothesis \ref{hypo:nondegeneracy}, i.e.,
$Z_{R} \pitchfork R'$, and consider its grading $\alpha$
which is a section of $\ell^*\Lambda_{2n}(\xi,d\lambda) \to [0,1]$.
Then we are given a symplectic bundle pair 
$$
(\ell^*\xi, \alpha) \to ([0,1], \del [0,1]).
$$ 
Let $\gamma$ be an iso-speed Reeb chord  from $R$ to $R'$ and consider a map 
$w:[0,1]^2 \rightarrow M$ satisfying the boundary condition
\bea
& &w(\tau,0) \in  R, \; w(\tau,1)\in R', \label{eq:bdy-R0R1}\\
& &w(0,t) =  \ell(t), \; w(1,t)=\gamma(t). \label{eq:asymp-gamma}
\eea
Denote by $[w,\gamma]$ the homotopy class of such $w$'s.

\begin{rem}\label{rem:[w,gamma]} By the exponential convergence result from \cite{oh-wang:CR-map1},
\cite{oh-yso:index},
we can compactify the contact instanton map $w: [0,\infty) \times [0,1] \to M$
satisfying
$$
\begin{cases}
w(0,t)=\ell(t), \; w(\tau,0)\in R, \\
 w(\tau,1)\in R', \; \lim_{\tau\rightarrow\infty}w(\tau,t) = \gamma(t)
 \end{cases}
$$
to a continuous map from $\overline w: [0,\infty) \cup \{\infty\} \times [0,1] \cong [0,1]^2$. This latter map
defines a  relative homotopy class $[w,\gamma]$ as that of a map $\overline w$
where the condition at $\tau =1$ is replaced by the
asymptotic condition at $\tau =\infty$. We denote by
$$
[w,\gamma]
$$
the homotopy class of this map $\overline w$. The analytical index of the contact instanton $w$ will
be expressed in terms of some topological index of this compactified map $\overline w$.
(See \cite{fooo:anchored} for a similar practice.)
\end{rem}
Now we are ready to assign an integer grading to each Reeb chord $\gamma \in \frak{X}(R,R')$ 
when the link $(R, R')$ is graded bridged by $(\ell, \alpha)$.
More specifically we consider an iso-speed Reeb chord $(\gamma, T_0)$ with
$T_0 = \int \gamma^*\lambda$. Then we can express $\gamma$ as 
$\gamma(t)=\phi^{T_0t}_{R_\lambda}(\gamma(0))$. Note that $T_0$
can be either positive or negative. 
If $T_0 < 0$, then the curve $\gamma(t)$ is tangent to the Reeb vector field but
in the opposite direction. 
Note that $d\phi^{T_0}_{R_\lambda}(T_{\gamma(0)}R)$ and $T_{w(\tau,1)}R'$
are transversal to each other by the nondegeneracy hypothesis given in
Hypothesis \ref{hypo:nondegeneracy}.

Now let $w: [0,1]^2 \to M$ be a smooth map satisfying
\eqref{eq:bdy-R0R1} and \eqref{eq:asymp-gamma}, and
$$
 (\ell, \alpha_\ell)
$$ 
be a graded bridge from $R$ to $R'$. Next for given Reeb chord $\gamma \in \mathfrak{X}(R,R')$
let $\alpha_\gamma$ be the section of $\gamma^*\Lambda_{(\xi,d\lambda)}$ given by
$$
\alpha_\gamma(t) = d\phi^{T_0t}_{R_\lambda}\left(T_{\gamma(0)}R\right).
$$
Then we pick a Lagrangian path  $\alpha^-$
from $T_{w(\tau,1)}R'$ to $d\phi^{T_0}_{R_\lambda}(T_{\gamma(0)}R)$  that satisfy the conditions
\begin{itemize}
\item $\alpha^-(0) = T_{w(\tau,1)}R', \, \alpha^-(1) = d\phi^{T_0}_{R_\lambda}(T_{\gamma(0)}R)$,
\item $\alpha^-(t) \in Lag(\xi_p,d\lambda|_p)\setminus Lag_1(\xi_p, d\lambda|_p; T_pR')$ for $p = \gamma(0)$,
\item $-(\alpha^-)'(1)$ is positively directed.
\end{itemize}
as in \cite[Proposition 8.3]{oh-yso:index}. 
Concatenating these 3 paths with the Gauss maps 
$$
\tau\in [0,1] \mapsto  TR|_{w(\tau,0)}, \,TR'|_{w(1-\tau,1)},
$$
we have a continuous section over $\del [0,1]^2$ 
\be\label{eq:tilde-alpha}
\widetilde{\alpha}_{([w,\gamma];\alpha_\ell)} := (\alpha_{\widetilde \ell}) \#( TR|_{w(\tau,0)})
\# (\alpha_\gamma)\#(\alpha^-) \# (TR'|_{w(1-\tau,1)}).
\ee
\begin{rem}\label{rem:pentagon}
Strictly speaking, $\widetilde{\alpha}_{([w,\gamma];\alpha_\ell)}$ is defined
 over the `pentagon' made by replacing the $(1,1)$ vertex 
of $\del [0,1]^2$ by an interval over which $\alpha^-$ is put. For the simplicity of exposition, we will
keep saying `over $\del [0,1]^2$' here and henceforth.
\end{rem}

This way we obtain the following symplectic bundle pair.

\begin{defn}[Symplectic bundle pair over {$[0,1]^2$}] \label{defn:symp-bundle-pair}
Let $w: [0,1]^2 \to M$ be a smooth map satisfying
\eqref{eq:bdy-R0R1} and \eqref{eq:asymp-gamma}, and $\alpha$ be a grading of
$\ell$. Then it induces the symplectic bundle pair
\be\label{eq:bundlepair-square}
\left(w^*\xi, \widetilde \alpha_{([w,\gamma];\alpha_\ell)}\right) \to ([0,1]^2, \del [0,1]^2).
\ee
\end{defn}

\subsection{Absolute grading of Reeb chords}

Now we are ready to associate 
a natural grading of the Reeb chord $\gamma$ by assigning the Maslov index 
of this symplectic bundle pair.

Recalling the decomposition \eqref{eq:Y-decompose} of 
 $TM = \xi \oplus \R \langle R_\lambda \rangle$,
we choose a trivialization $\Phi:w^*TM \rightarrow [0,1]^2\times\R^{2n+1}$
by taking the diagonal form of the trivialization as follows.

\begin{defn}[Diagonal trivialization $\Phi$ of $w^*TM$]\label{defn:Phi} $\Phi$ satisfies
\begin{itemize}
\item it respects the splitting $w^*TM = w^*\xi \oplus \R\langle R_\lambda \rangle$ and
$$
\R^{2n+1} \cong \C^n \oplus \R,
$$
\item  $\Phi|_{w^*\xi}:w^*\xi \rightarrow [0,1]\times\R^{2n}\times\{0\} \cong [0,1]^2\times\R^{2n}$ is a symplectic trivialization.
\end{itemize}
\end{defn}

Then we can associate a Lagrangian path $\alpha^\Phi_{[w,\gamma];\alpha}$ 
defined on $\partial[0,1]^2\setminus \{(1,1)\}$ by
\be\label{eq:lag-loop-piece}
\begin{cases}
\alpha^\Phi_{[w,\gamma];\alpha}(\tau,i)  =  \Phi(T_{w(\tau,i)}R_i) \qquad {\rm for}
\; i=0,1  \\
\alpha^\Phi_{[w,\gamma];\alpha}(0,t)  =  \Phi(\alpha(t)), \\
 \alpha^\Phi_{[w,\gamma];\alpha}(1,t)=\Phi(d\phi^{T_0t}_{R_\lambda}(T_{\gamma(0)}R_0)).
\end{cases}
\ee
 Note that
$$
\lim_{t\rightarrow 1}\alpha^\Phi_{[w,\gamma];\alpha}(1,t)=\Phi(d\phi^{T_0}_{R_\lambda}(T_{\gamma(0)}R_0))
$$
and
$$
\lim_{\tau\rightarrow 1}\alpha^\Phi_{[w,\gamma];\alpha}(\tau,1)=\Phi(T_{\gamma(1)}R_1).
$$
\begin{defn}[Grading of Reeb chords]\label{defn:grading-chord}
We define $\mu([w,\gamma];\alpha)$ to be the Maslov index of this Lagrangian loop 
$\widetilde{\alpha}^\Phi_{[w,\gamma];\alpha}$ in $(\R^{2n},\omega_0)$.
We call $\mu([w,\gamma];\alpha)$  the Maslov index of the bundle pair
\eqref{eq:bundlepair-square}.
\end{defn}
Note that this index is independent of the choice of the diagonal trivializations $\Phi$
and depends only on the $[w,\gamma]$ homotopy classes.

\begin{rem}\label{rem:GammaRR'}
In general, this defines a $\Z/\Gamma$ grading where $\Gamma = \Gamma_{RR'}$ is the positive generator of the
abelian group
\be\label{eq:Gamma}
\Gamma_{RR'}: = \{ \mu(A) \in \Z \mid A \in \pi_1(\Omega(R,R'))\}
\ee
where $\mu(A)$ is the Maslov index of the annulus map $v: S^1 \times [0,1] \to M$
with $[v] = A$ satisfying the boundary condition
$$
v(\theta,0) \in R, \, v(\theta,1) \in R' \quad \theta \in S^1.
$$
\end{rem}

\section{Off-shell energy of contact instantons on disc-type surfaces}
\label{sec:offshellenergy}

We restrict ourselves to the case of $g=0$ in the present paper, i.e., $(\dot \Sigma,j)$ 
of disc-type surfaces.
We first recall the following basic existence and the uniqueness result 
on the minimal area metrics proved in \cite{zwiebach,wolf-zwieb}.

\begin{thm}[Zwiebach] When $g = 0$ and $k \geq 3$, there exists a unique
minimal area metric associated to each $(\Sigma, j) \in \CM_{0,k}$, which continuously extends to
the compactification $\overline \CM_{0,k}$.
\end{thm}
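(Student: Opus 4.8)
The plan is to reduce the statement to Strebel's theory of Jenkins--Strebel quadratic differentials. On $(\Sigma,j)\in\CM_{0,k}$ the minimal area problem asks for the conformal metric $\rho=e^{2\sigma}\,|dz|^2$ of least reduced area among those for which every non-contractible closed curve has $\rho$-length $\ge 2\pi$; a small loop around a puncture then already forces a semi-infinite flat cylinder of circumference $2\pi$ there, so the total area is infinite and one minimizes instead the \emph{reduced} area obtained by discounting those cylinders. I would prove that a minimizer exists, is unique, and equals $|\phi|$, the flat metric of the unique Jenkins--Strebel quadratic differential $\phi$ on $\Sigma$ with double poles of equal prescribed residue at the $k$ punctures, normalized so that each cylinder of its horizontal foliation has core geodesic of length $2\pi$. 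Existence and uniqueness of such a $\phi$ for given residues is Strebel's theorem; its horizontal trajectories are closed and, because $g=0$, the complement of the critical graph is a disjoint union of exactly $k$ cylinders, one around each puncture. What is special to genus zero is precisely that the area-minimizing metric has this purely cylindrical (Jenkins--Strebel) trajectory structure.

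Following the argument of \cite{wolf-zwieb}, I would first check that $|\phi|$ is admissible: a non-contractible closed curve is freely homotopic to a concatenation of core circles and arcs crossing cylinders, hence has $|\phi|$-length $\ge 2\pi$. For minimality the horizontal foliation of $\phi$ is used as a calibration: it carries the transverse measure $\mu=|\operatorname{Im}\sqrt{\phi}|$, with $\mu(\Sigma)=\sum_i h_i$ the sum of the cylinder heights and $\Area(|\phi|)=2\pi\,\mu(\Sigma)$, and applying Cauchy--Schwarz fibrewise over each cylinder one gets, for any admissible $\rho$,
\[
\Area(\rho)\ \ge\ \int_\Sigma \frac{\ell_\rho(\text{leaf})^2}{2\pi}\;d\mu\ \ge\ 2\pi\,\mu(\Sigma)\ =\ \Area(|\phi|),
\]
where $\ell_\rho$ denotes $\rho$-length and the second inequality uses admissibility, $\ell_\rho(\text{leaf})\ge 2\pi$. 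Hence $|\phi|$ attains the infimum and a minimizer exists. If $\rho$ is any minimizer, equality must hold throughout: equality in Cauchy--Schwarz forces $\sigma$ to be constant along each leaf, i.e.\ $\rho$ flat on every cylinder with the horizontal leaves geodesic, and saturation of the systolic bound forces these closed geodesics to have length exactly $2\pi$; thus the measured foliation of $\rho$ agrees with that of $|\phi|$, and reading off the quadratic differential of $\rho$ and applying Strebel's uniqueness gives $\rho=|\phi|$. (Equivalently one may recast the whole optimization as a strictly convex programme after the standard change of variables, the Strebel differential furnishing the optimal dual ``band structure''.)

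It remains to establish the continuous extension to $\overline{\CM}_{0,k}$. A boundary point is a stable $k$-pointed nodal rational curve $\Sigma_0$ obtained by pinching a collection of disjoint simple closed curves; as $j\to j_0$ those curves acquire conformal moduli tending to $\infty$, so the cylinders of $\phi_j$ homotopic to them become infinitely long, while on compact subsets of the complement $\phi_j$ converges to the Strebel differential of each component of $\Sigma_0$, the nodes becoming new punctures carrying the induced residues. This is the standard continuity of Strebel differentials under degeneration of the complex structure, and it translates --- after collapsing the long cylinders --- into convergence of $|\phi_j|$ to the minimal area metric attached to $j_0$, so the assignment extends continuously over $\overline{\CM}_{0,k}$.

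I expect the genuine obstacle to be the identification underlying the second paragraph: ruling out any ``non-cylindrical'' portion of the minimizer, equivalently showing that the calibration above is sharp. The naive Euler--Lagrange / complementary-slackness analysis only says that the metric is flat where the systolic constraint is active, and one must use the topology of genus-$0$ surfaces to force the active set to be foliated by closed geodesics that cover all of $\Sigma$ --- this fails in higher genus. A secondary difficulty is the uniformity needed in the degeneration step, where the non-compactness of $\CM_{0,k}$ and the unbounded cylinder heights must be reconciled simultaneously.
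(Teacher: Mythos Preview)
The paper does not prove this theorem at all: it is stated as a known result and attributed to the references \cite{zwiebach,wolf-zwieb}, with no argument supplied. There is therefore no ``paper's own proof'' to compare against. Your proposal is a reasonable outline of the Wolf--Zwiebach argument that the paper is citing --- the identification of the minimal area metric with the flat metric $|\phi|$ of a Jenkins--Strebel differential, the length--area (calibration) inequality over the cylinder decomposition, and the continuity under degeneration via convergence of Strebel differentials --- so in substance you are reconstructing the cited source rather than diverging from the paper. The caveats you flag (ruling out a non-cylindrical region of the minimizer in genus zero, and uniform control near the boundary of $\overline{\CM}_{0,k}$) are the genuine technical points of \cite{wolf-zwieb}, and your sketch would need those filled in, but nothing in the paper itself goes beyond invoking the reference.
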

In other words, the minimal area metric provides a natural slice to the well-known
isomorphism between 
$$
\{\text{\rm complex structures}\} \longleftrightarrow  \{\text{\rm conformal isomorphism classes of
associated metrics}\}
$$
and respects the sewing rule of the degeneration of conformal structures.
A representation of the conformal structure on the boundary punctured discs,
was used in Fukaya and the author's work \cite{fukaya-oh} in their study of
adiabatic degeneration of pseudo-holomorphic polygons with Lagrangian
boundaries on the cotangent bundle.

\subsection{Definition of off-shell energy}
\label{subsec:definition-energy}

We  consider a smooth map $w: (\dot \Sigma, \del \dot \Sigma) \to (M, \mathsf R)$.
The norm $|dw|$ of the map
$$
dw:(T\Sigma,h) \to (TM, g)
$$
with respect to the metric $g$ is defined by
$$
|dw|_g^2 := \sum_{i=1}^{2} {|dw(e_i)|_g}^2,
$$
where  $\{ e_1, e_2 \}$ is an orthonormal frame of $T \Sigma$
with respect to $h$.

The following are the consequences from the definition of
contact Cauchy-Riemann map and the compatibility of $J$ to $d\lambda$ on $\xi$, whose
proofs we omit but refer to \cite{oh-wang:CR-map1}.

\begin{prop}\label{prop:energy-omegaarea}
Denote $g_J=\omega(\cdot, J \cdot)$ and the associated norm
by $|\cdot| = |\cdot|_J$. Fix a Hermitian metric $h$ of $(\Sigma,j)$,
and consider a smooth  map $u:\Sigma \to M$. Then we have
\begin{enumerate}
\item[(1)] $|d^\pi w|^2 = |\del^\pi w| ^2 + |\delbar^\pi w|^2$,
\item[(2)] $2w^*d\lambda = (-|\delbar^\pi w|^2 + |\del^\pi w|^2) \,dA $
where $dA$ is the area form of the metric $h$ on $\Sigma$.
\item[(3)] $w^*\lambda \wedge w^*\lambda \circ j = |w^*\lambda|^2\, dA$
\item[(4)] $|\nabla (w^*\lambda)|^2 = |dw^*\lambda|^2 + |\delta w^*\lambda|^2$.
\end{enumerate}
\end{prop}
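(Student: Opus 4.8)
The plan is to verify (1)--(3) by pointwise linear algebra on the Hermitian bundle $(w^*\xi, J, g_J)$ and to recognize (4) as a Weitzenb\"ock-type identity for the $1$-form $w^*\lambda$ on the Riemann surface $(\Sigma, h)$. Throughout I would use only the splitting $TM = \xi \oplus \R\langle R_\lambda\rangle$, in which $R_\lambda$ is $d\lambda$-null and $g$-orthogonal to $\xi$, together with the compatibility of $J$ with $d\lambda|_\xi$, so that $J$ is a $g_J$-isometry and $d\lambda|_\xi(X,Y) = g_J(JX,Y)$.

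For (1) and (2), fix $p \in \Sigma$ and a positively oriented $h$-orthonormal frame $\{e_1, e_2 = je_1\}$ of $T_p\Sigma$. Writing $dw(p) = d^\pi w(p) + (w^*\lambda)(p)\otimes R_\lambda(w(p))$ along $TM = \xi\oplus\R\langle R_\lambda\rangle$, the second summand is $g$-orthogonal to $\xi$ and annihilated by $d\lambda$, hence contributes to neither side of (1) nor (2); so everything reduces to the $\R$-linear map $\phi := d^\pi w(p)\colon (T_p\Sigma, j)\to(\xi_{w(p)}, J)$. Splitting $\phi = \del^\pi w + \delbar^\pi w$ into its $J$-linear and $J$-antilinear parts gives $\del^\pi w(e_2) = J\,\del^\pi w(e_1)$ and $\delbar^\pi w(e_2) = -J\,\delbar^\pi w(e_1)$; setting $u = \del^\pi w(e_1)$ and $v = \delbar^\pi w(e_1)$ and using that $J$ is a $g_J$-isometry, one finds $|\phi(e_1)|^2 = |u|^2 + |v|^2 + 2\langle u,v\rangle$ and $|\phi(e_2)|^2 = |u|^2 + |v|^2 - 2\langle u,v\rangle$. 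Summing the two yields $|d^\pi w|^2 = 2|u|^2 + 2|v|^2 = |\del^\pi w|^2 + |\delbar^\pi w|^2$, which is (1). For (2), the same notation gives
\[
w^*d\lambda(e_1,e_2) = d\lambda|_\xi\big(\phi(e_1),\phi(e_2)\big) = g_J\big(J\phi(e_1),\phi(e_2)\big) = |u|^2 - |v|^2 = \tfrac12\big(|\del^\pi w|^2 - |\delbar^\pi w|^2\big),
\]
and since $dA(e_1,e_2) = 1$ this is exactly $2\,w^*d\lambda = (-|\delbar^\pi w|^2 + |\del^\pi w|^2)\,dA$.

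Item (3) is the elementary pointwise identity $\beta\wedge(\beta\circ j) = |\beta|^2\, dA$ for any $1$-form $\beta$ on the oriented Riemannian surface $(\Sigma, h)$ --- essentially $\beta\wedge*\beta = |\beta|^2\,dA$, with the sign convention of \cite{oh-wang:CR-map1} fixing $\beta\circ j$ relative to the Hodge star --- applied to $\beta = w^*\lambda$; concretely one expands $\beta$ in the coframe dual to $\{e_1,e_2\}$ and takes the wedge. Finally, (4) is a Weitzenb\"ock--Bochner identity for the $1$-form $w^*\lambda$ on $(\Sigma,h)$: it follows by comparing the rough Laplacian $\nabla^*\nabla$ with the Hodge Laplacian $d\delta+\delta d$ and pairing with $w^*\lambda$, together with integration by parts in the global version (the boundary and puncture terms being controlled by the exponential decay of $w$ from \cite{oh-wang:CR-map1}, \cite{oh-yso:index}). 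I expect (4) to be the only item requiring genuine care --- one must keep track of the curvature of $(\Sigma, h)$ and of the behaviour at the strip-like ends --- whereas (1)--(3) are routine; accordingly, as the authors do, I would refer the detailed bookkeeping for (4) to \cite{oh-wang:CR-map1}.
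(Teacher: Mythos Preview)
Your proof is correct and in fact considerably more detailed than what the paper provides: the paper omits the proof entirely, simply referring the reader to \cite{oh-wang:CR-map1}. Your pointwise linear-algebra argument for (1)--(3) is exactly the standard verification one expects to find in that reference, and your identification of (4) as a Weitzenb\"ock-type identity---with the caveat that curvature and boundary/puncture terms must be tracked---is the right diagnosis.

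One remark on (4): as you implicitly recognize by flagging the curvature, the identity $|\nabla\beta|^2 = |d\beta|^2 + |\delta\beta|^2$ is \emph{not} a pointwise identity for a general $1$-form $\beta$ on a surface; the pointwise discrepancy is $2\det(\nabla\beta)$, and in the integrated version the Weitzenb\"ock formula produces a Gauss-curvature term $\int K|\beta|^2$. So (4) should be read either as an $L^2$ identity on a flat domain (e.g.\ the strip $\R\times[0,1]$ or cylindrical ends, which is the setting where it is actually used in the a~priori estimates of \cite{oh-wang:CR-map1}) or with an implicit curvature correction. Your parenthetical ``one must keep track of the curvature of $(\Sigma,h)$'' shows you are aware of this, but it would strengthen the write-up to state explicitly that the identity is exact on flat pieces and acquires a lower-order curvature term otherwise.
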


We then introduce the $\xi$-component of the harmonic energy, which we call
the \emph{$\pi$-harmonic energy}. 

\begin{defn}\label{defn:pi-energy}
For a smooth map $\dot \Sigma \to M$, we define the $\pi$-energy of $w$ by
\be\label{eq:Epi}
E^\pi(j,w) = \frac{1}{2} \int_{\dot \Sigma} |d^\pi w|^2.
\ee
\end{defn}
We have the equality 
\be\label{eq:contact-area}
E^\pi(j,w)  = \int_{\dot \Sigma} w^*d\lambda 
\ee
`on shell' i.e., for any contact Cauchy-Riemann map, which satisfies $\delbar^\pi w = 0$.

As discovered by Hofer in \cite{{hofer:invent}} in the context of symplectization, one
needs to examine the $R_\lambda$-part of energy that controls the asymptotic behavior of
contact instantons near the puncture. For this purpose, the \cite {hofer:invent}-type energy
introduced in \cite{{hofer:invent}} is crucial. In this section, we generalize this energy
to the general context of non-exact case without involving the symplectization.

Following \cite{{hofer:invent}}, \cite{behwz} and \cite{oh:contacton},
 we introduce the following class of
test functions
\begin{defn}\label{defn:CC} We define
\be
\CC =\{\varphi: \R \to \R_{\geq 0} \mid \supp \varphi \, \text{is compact}, \, \int_\R \varphi = 1\}.
\ee
\end{defn}

For the purpose of emphasizing the charge vanishing for the open-string context
established in \cite{oh-yso:index} and the current circumstance of construction of an $A_\infty$ category
for which we consider the domain $\dot \Sigma$ of \emph{disc-type surface} which is
simply connected,  we first verbatim recall the definition of $\lambda$-energy introduced from \cite{oh:contacton} for the closed string context, which we apply to the open string context 
without much change in general. (See Section 4 \& 5 \cite{oh:contacton} for the details.)

This enables us to introduce the following.

\begin{defn}[$\lambda$-energy at a puncture $p$]
We denote the common value of $E_{\CC,f}(j,w)$ by $E^\lambda_p(w)$, and call the \emph{$\lambda$-energy at $p$}.
\end{defn}

On the other hand, by the simply connectedness of $\dot \Sigma$, we can write
$$
w^*\lambda \circ j = dg,
$$
for some globally defined function $g: \cdot \Sigma \to \R$, 
i.e., $w^*\lambda\circ j$ becomes an exact one-form on $\dot \Sigma$. In terms of the strip-like coordinates
$(\tau,t)$ near a puncture, we can express
$g = g(\tau,t)$ and then we have
$$
g = f_p + c_p
$$
for some constant $c_p$. 

\begin{rem} We note that one can lift any smooth exact contact instanton $w: \dot \Sigma \to M$
can be lifted to the symplectization $M \times \R$ by setting $u: = (w,f)$ for each individual instanton $w$
modulo a shift to the radial direction of $M \times \R$.
\end{rem}

\begin{defn}[$E_\CC$-energy] Let $w$ satisfy $d(w^*\lambda \circ j) = 0$. Then we define
$$
E_{\CC}(j,w) = \sup_{\varphi \in \CC} \int_\Sigma d(\psi(g)) \wedge dg\circ j
= \sup_{\varphi \in \CC} \int_\Sigma d(\psi(g)) \wedge (- w^*\lambda).
$$
\end{defn}
 We note that
$$
d(\psi(g)) \wedge dg \circ j = \psi'(g) dg \wedge dg\circ j= \varphi(g) dg \wedge dg\circ j \geq 0
$$
and hence we can rewrite $E_{\CC}(j,w)$ into
$$
E_{\CC}(j,w) = \sup_{\varphi \in \CC} \int_\Sigma \varphi(g) dg \wedge dg\circ j.
$$
For a given smooth map $w$ satisfying $d(w^*\lambda \circ j) = 0$,
we have $E_{\CC;g}(w) = E_{\CC,g}(w)$ whenever $dg = w^*\lambda\circ j\, dt = dg$
on $D^2_\delta(p) \setminus \{p\}$ (and so $g(z) = g(z) + c$ for some constant $c$

This function $g$ seems to deserve a name.

\begin{defn}[Contact instanton potential] We call the above normalized function $g$
the \emph{contact instanton potential} of the contact instanton charge form $w^*\lambda \circ j$.
\end{defn}

The following then is the definition of the total energy.

\begin{defn}[Total energy] Let $w:\dot \Sigma \to Q$ be any smooth map.
We define the total energy of $w$ by
\be\label{eq:total-energy-sum}
E(j,w) = E^\pi(j,w) + E^\lambda(j,w).
\ee
\end{defn}

If $\dot \Sigma$ carries only one puncture, $\dot \Sigma \cong \C$ and so cannot
carry the above minimal area representation but in this case
the closed form $w^*\lambda \circ j$ is automatically exact.
Therefore there exists a function $g: \dot \Sigma \to \R$ such that $w^*\lambda \circ j = dg$
in which case we may regard the pair $(w,g)$ as a pseudoholomorphic map to the
symplectization.

We define
\be\label{eq:finallambdaenergy}
E^\lambda(j,w) = \sup_{\varphi \in \CC} \int_\Sigma d(\varphi(g)) \wedge dg\circ j.
\ee
This energy will be used in our construction of the compactification of moduli space of contact
instantons of genus 0 in a sequel.
Now we define the final form of the off-shell energy.
Let $w:\dot \Sigma \to Q$ be any smooth map.
We define the total energy of $w$ by
\be\label{eq:total-energy}
E(j,w) = E^\pi(j,w) + E^\lambda(j,w)
\ee
In the rest of the paper, we suppress $j$ from the arguments of
the energy $E(j,w)$ and just write $E(w)$ for the simplicity of notations unless necessary 
to emphasize the dependence on $j$.

The following lemma was proved in \cite{oh:contacton} for the closed-string context
whose proof also applies to the open-string context without change.

\begin{lem}[Lemma 8.1 \cite{oh:contacton}]\label{lem:|dw|<infty} 
Suppose $E(w) = E^\pi(w) + E^\lambda(w) < \infty$. Then
$$
\|dw\|_{C^0} < \infty.
$$
\end{lem}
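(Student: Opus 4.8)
The plan is to derive the $C^0$ bound on $dw$ from the finiteness of the total energy by a standard bubbling-off / epsilon-regularity argument, adapted to the contact instanton setting from \cite{oh:contacton}. First I would split the pointwise norm as $|dw|^2 = |d^\pi w|^2 + |w^*\lambda|^2$, using the decomposition $TM = \xi \oplus \R\langle R_\lambda\rangle$ together with Proposition \ref{prop:energy-omegaarea}; so it suffices to bound each of the two pieces separately in $C^0$. For the $\pi$-component I would invoke the a priori elliptic estimate for the contact Cauchy--Riemann equation $\delbar^\pi w = 0$ (established in \cite{oh-wang:CR-map1} and recalled in the analytic package of \cite{oh:entanglement1}, \cite{oh:contacton-transversality}), which gives an interior (and boundary, using the Legendrian condition $u(e^j_\alpha)\subset R_j$) estimate of the form $\|d^\pi w\|_{C^0(B_{r/2})} \le C(r)\,\|d^\pi w\|_{L^2(B_r)}$ \emph{provided} the local $\pi$-energy $\int_{B_r}|d^\pi w|^2$ is below the threshold $\hbar$ coming from the no-bubbling constant.

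The core of the argument is therefore the familiar contradiction scheme: if $|dw|$ were unbounded there would exist a sequence $z_k \in \dot\Sigma$ with $|dw(z_k)| =: R_k \to \infty$; after the usual Hofer rescaling trick one produces, in the limit, a nonconstant finite-energy contact instanton on the plane $\C$ (or the half-plane $\H$ with boundary on a single $R_j$, in the boundary case), i.e.\ a nontrivial bubble. One then has to rule this out using the energy identities: the $\pi$-energy of such a limit is $\int w^*d\lambda$ by \eqref{eq:contact-area}, and combined with the $\lambda$-energy $E^\lambda$ controlling the $R_\lambda$-direction (the Hofer-type energy recalled in Section \ref{sec:offshellenergy}, and which is finite since $E(w) < \infty$), the removable singularity / asymptotic convergence results force the bubble to be either a constant or to carry at least the minimal positive amount of $d\lambda$-area, contradicting either the rescaling (which makes $E^\pi$ of the limit $\le \limsup E^\pi(w) < \infty$ but with concentrated derivative) or the exactness of $w^*\lambda\circ j$ on the simply connected domain. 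The clean way to phrase it: a finite-energy plane-bubble must converge to a Reeb orbit at infinity, but the concentration hypothesis forces its energy to be arbitrarily small after rescaling, while a nonconstant such object has energy $\ge T(M,\lambda) > 0$ by the period gap — contradiction.

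Concretely, the steps in order are: (i) reduce to bounding $|d^\pi w|$ and $|w^*\lambda|$ separately; (ii) state the $\hbar$-regularity estimate for $\delbar^\pi w = 0$ with Legendrian boundary, citing \cite{oh-wang:CR-map1}, \cite{oh:contacton-transversality}; (iii) assume for contradiction that $|dw|$ is unbounded, extract $z_k$ with $|dw(z_k)|\to\infty$, and rescale $w_k(z) := w(z_k + z/R_k)$ (with the boundary-chart variant when $z_k$ approaches $\del\dot\Sigma$); (iv) use the uniform energy bound $E(w_k) \le E(w) < \infty$ together with elliptic bootstrapping to get $C^\infty_{loc}$-convergence to a nonconstant finite-energy contact instanton $w_\infty$ on $\C$ or $\H$; (v) apply the removable-singularity / asymptotic-convergence theorem of \cite{oh-wang:CR-map1}, \cite{oh-yso:index} to see $w_\infty$ extends over $\infty$ with a Reeb orbit (or Reeb chord) asymptotics, forcing $\int w_\infty^*d\lambda \ge T_\lambda(M;\mathsf R) > 0$; (vi) observe that the concentration hypothesis, via a cutoff-and-rescale computation, makes $E^\pi(w_\infty)$ vanish or makes the derivative of $w_\infty$ blow up at a single point while energy stays finite — either way contradicting (v). I expect step (vi)—pinning down exactly which normalization of the rescaling yields the clean contradiction, and handling the $\lambda$-energy (not just the $\pi$-energy) under rescaling so that the Hofer energy of the bubble is controlled—to be the main technical obstacle, though since \cite{oh:contacton} carries out the closed-string version verbatim, the write-up can be kept brief by citing \cite[Lemma 8.1]{oh:contacton} and indicating only the changes needed for the Legendrian boundary condition.
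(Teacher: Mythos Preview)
Your proposal is correct and aligns with the paper's treatment: the paper gives no proof here but simply cites \cite[Lemma 8.1]{oh:contacton} and remarks that the closed-string argument carries over to the Legendrian boundary setting without change, which is exactly what you propose to do in your final sentence. The bubbling/rescaling scheme you outline is indeed the content of that cited lemma.

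One sharpening worth making explicit in your write-up concerns the interplay of the two energies in steps (v)--(vi), which you correctly flag as the delicate point. The period-gap lower bound $E^\pi(w_\infty)\ge T_\lambda(M;\mathsf R)$ in (v) does \emph{not} by itself exclude a nonconstant bubble with $d^\pi w_\infty\equiv 0$, i.e.\ one lying entirely in a Reeb trajectory. For such a bubble the potential $g$ is harmonic with bounded gradient on $\C$ (or on $\H$ with Neumann condition, after using the Legendrian boundary), hence affine; a direct computation then shows $E^\lambda(w_\infty)=\infty$ whenever $g$ is nonconstant. Since $E^\lambda$ is monotone under restriction and conformally invariant under rescaling, the hypothesis $E^\lambda(w)<\infty$ passes to the limit and rules this case out. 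With that in hand, $E^\pi(w_\infty)>0$ is forced, and your tail-energy argument (the blow-up points $z_k$ must escape to a puncture, so the local $\pi$-energy near $z_k$ tends to zero) gives the contradiction. This is the mechanism by which \emph{both} summands in $E(w)=E^\pi(w)+E^\lambda(w)$ are genuinely needed, and it is worth stating cleanly rather than leaving it implicit.
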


\subsection{Properness of contact instanton potential function and $\lambda$-energy}
\label{subsec:enerbybound}

In this subsection, we recall the explanation in the closed-string
context of arbitrary genus from \cite[Section 9]{oh:contacton} on
the relationship between the $\pi$-energy,
the $\lambda$-energy and the contact instanton potential function $f$, and 
adapt it to the current open-string context of genus 0. 

Let $h$ be the minimal area metric of $(\dot \Sigma,j)$ and 
$w$ and $g$ as above. 
We first note that the function  $g: \dot \Sigma \to \R$ is proper if and only if
\be\label{eq:f-proper}
g(v_j) = \pm \infty
\ee
for all exterior vertex $v_j \in V(T)$ where $T$ is the tree 
associated the minimal area metric of the puncture Riemann surface $(\dot \Sigma,j)$.

We now collect a series results proved in \cite{oh:contacton} 
in the closed-string context whose proofs equally apply without change
except slight adaptations to the current open-string context.

\begin{cor}[Corollary 9.1 \cite{oh:contacton}]
Suppose that $E(w) < \infty$ and let $f$ be the function defined
in section \ref{sec:offshellenergy}. Then $|dg|_{C^0} < \infty$.
\end{cor}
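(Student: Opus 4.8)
Here $g$ denotes the contact instanton potential of the preceding definition, namely the normalized primitive of the exact one-form $w^*\lambda\circ j$ on the simply connected surface $\dot\Sigma$ (the function written $f$ in \cite{oh:contacton}). The plan is to read off the bound directly from Lemma \ref{lem:|dw|<infty} by a pointwise comparison of norms, taking all norms on the domain with respect to the minimal area metric $h$ of $(\dot\Sigma,j)$ used there; since the ends of $(\dot\Sigma,h)$ near the punctures are flat half-strips, the $C^0$-norm is the honest pointwise supremum and no degeneration of $h$ enters.

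First I would use that $j$ is a pointwise isometry on one-forms: in any local $h$-orthonormal coframe on $\dot\Sigma$ one checks $|\beta\circ j|_h=|\beta|_h$ for every one-form $\beta$. Applied to $\beta=w^*\lambda$, and recalling $dg=w^*\lambda\circ j$, this gives $|dg|_h=|w^*\lambda|_h$ pointwise on $\dot\Sigma$. Next I would use the structure of the contact triad metric $g_J$, for which $R_\lambda$ is a unit field $g_J$-orthogonal to $\xi=\ker\lambda$: any $v\in T_xM$ splits as $v=v^\pi+\lambda(v)R_\lambda$ with $v^\pi\in\xi_x$, so $|v|_{g_J}^2=|v^\pi|_{g_J}^2+\lambda(v)^2$ and hence $|\lambda(v)|\le|v|_{g_J}$. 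Taking $v=dw(X)$ for $X$ a unit tangent vector of $(\dot\Sigma,h)$ and maximizing over such $X$ yields $|w^*\lambda|_h\le|dw|_{h,g_J}$ at every point, so that $|dg|_{C^0}\le\|dw\|_{C^0}$.

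To finish, the hypothesis $E(w)=E^\pi(w)+E^\lambda(w)<\infty$ together with Lemma \ref{lem:|dw|<infty} gives $\|dw\|_{C^0}<\infty$, whence $|dg|_{C^0}<\infty$, as claimed. This is essentially a bookkeeping corollary of Lemma \ref{lem:|dw|<infty}, so I do not expect a genuine obstacle; the one point deserving care is that the comparison be set up with the triad metric $g_J$ on $M$ — so that the constant is $1$ and no hypothesis on the image of $w$ is needed — and with the same domain metric $h$ as in Lemma \ref{lem:|dw|<infty}.
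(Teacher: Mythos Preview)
Your argument is correct and is exactly the intended one: the corollary is placed immediately after Lemma~\ref{lem:|dw|<infty} because $dg = w^*\lambda\circ j$ together with the pointwise inequality $|w^*\lambda|_h \le |dw|_{h,g_J}$ in the triad metric reduces the bound on $|dg|_{C^0}$ directly to the bound on $\|dw\|_{C^0}$. The paper does not spell out a proof here (it cites \cite{oh:contacton}), but your derivation is the standard one and matches the logic of the cited source.
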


\begin{prop}[Proposition 9.2 \cite{oh:contacton}]\label{prop:proper-energy} 
Suppose that $E^\pi(w) < \infty$ and the function $g:\dot \Sigma \to \R$ is proper.
Then $E(w) < \infty$.
\end{prop}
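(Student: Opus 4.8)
The plan is to show that the finiteness of the total energy $E(w) = E^\pi(w) + E^\lambda(w)$ is controlled by the two hypotheses --- finiteness of the $\pi$-energy and properness of $g$ --- by estimating the $\lambda$-energy term by term near each puncture and on the compact part of $\dot\Sigma$. Recall that $E(w) = E^\pi(w) + E^\lambda(w)$ with $E^\lambda(w) = \sup_{\varphi\in\CC}\int_{\dot\Sigma} d(\varphi(g))\wedge dg\circ j = \sup_{\varphi\in\CC}\int_{\dot\Sigma}\varphi(g)\, dg\wedge dg\circ j$, and that by Proposition \ref{prop:energy-omegaarea}(3) the integrand equals $\varphi(g)\,|w^*\lambda|^2\, dA \geq 0$. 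So the entire content is an upper bound for $\int_{\dot\Sigma}\varphi(g)\,|w^*\lambda|^2\, dA$ that is uniform over all test functions $\varphi\in\CC$.

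First I would invoke the Corollary of \cite[Cor. 9.1]{oh:contacton} quoted just above: since $E^\pi(w)<\infty$ and (once we know $E(w)<\infty$, which we are proving, so one must be a little careful about circularity here) $\|dg\|_{C^0}<\infty$; in fact one should extract from that corollary the purely $\pi$-energy-driven bound $\|dg\|_{C^0} = \|w^*\lambda\circ j\|_{C^0} \leq \|dw\|_{C^0}$, and the latter is finite once $E^\pi(w)<\infty$ together with the a priori $C^0$-estimate of Lemma \ref{lem:|dw|<infty} --- or rather the version of it that only needs the $\pi$-energy and the asymptotic control near punctures. Granting $\|dg\|_{C^0}\leq C_0$, the integrand is pointwise bounded by $\varphi(g)\,C_0^2\, dA$. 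On the compact core $K\subset\dot\Sigma$ (the complement of the strip-like ends) this contributes at most $C_0^2\,\mathrm{Area}(K)\cdot\sup\varphi$ --- but $\sup\varphi$ is not uniformly bounded over $\CC$, so this crude bound is not quite enough; instead one changes variables. The key mechanism is that $\int\varphi(g)\,dg\wedge dg\circ j$, being the pullback under $g$ of a measure of total mass $\int_\R\varphi = 1$ against the "level-set length" of $g$, is controlled by a coarea-type identity: $\int_{\dot\Sigma}\varphi(g)\,dg\wedge dg\circ j = \int_\R \varphi(s)\Big(\int_{g^{-1}(s)} |dg|\, d\ell\Big)\, ds$, where the inner integral is the $|dg|$-weighted length of the level set. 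By properness of $g$, each level set $g^{-1}(s)$ is compact; by $\|dg\|_{C^0}\leq C_0$ and the finiteness of $E^\pi(w) = \frac12\int|d^\pi w|^2$, the total "$dg$-length" of level sets is integrable, uniformly in $s$ on the range relevant to any compactly supported $\varphi$ away from the punctures.

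The main obstacle, and where I would spend the real effort, is the analysis near the punctures: there $g\to\pm\infty$ (by the characterization \eqref{eq:f-proper} of properness), the ends are non-compact, and one must show the tail of $\int\varphi(g)\,|w^*\lambda|^2\,dA$ over a strip-like end $[0,\infty)\times[0,1]$ is uniformly small. Here one uses the exponential convergence of $w$ to a Reeb chord established in \cite{oh-wang:CR-map1}, \cite{oh-yso:index} (valid under Hypothesis \ref{hypo:nondegeneracy}): $|d^\pi w|(\tau,t)\to 0$ exponentially as $\tau\to\infty$, while $w^*\lambda\to T\,dt$, so $|w^*\lambda|^2\,dA$ is asymptotically $T^2\,d\tau\,dt$ which is \emph{not} integrable on the end --- this is exactly why the weight $\varphi(g)$ is indispensable. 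On the end, $g(\tau,t) = f_p(\tau,t) + c_p$ with $f_p\approx T\tau + \text{const}$ (up to exponentially decaying corrections and the bounded $t$-oscillation), so $g$ is, up to a bounded error, a linear reparametrization of $\tau$; hence $\int_0^\infty\!\!\int_0^1 \varphi(g(\tau,t))\,T^2\,d\tau\,dt \approx |T|\int_\R\varphi(s)\,ds = |T| < \infty$ uniformly in $\varphi$. Making this rigorous requires: (i) pinning down the asymptotics $g(\tau,t) = T\tau + O(1) + O(e^{-\delta\tau})$ from the cited convergence theorems, (ii) performing the change of variables $s = g(\tau,t)$ fibrewise in $t$ with a Jacobian bounded above and below, and (iii) summing the finitely many puncture contributions and adding the compact-core contribution to get $E^\lambda(w)\leq \sum_p |T_p| + C_0^2\,\mathrm{Area}(K) + \varepsilon$, uniformly in $\varphi$, whence $E(w) = E^\pi(w)+E^\lambda(w)<\infty$. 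I would structure the write-up as: recall the coarea identity; dispatch the compact core using $\|dg\|_{C^0}<\infty$ and properness; then treat each strip-like end via the exponential asymptotics of $g$; and conclude by summation --- essentially transcribing the genus-$g$ argument of \cite[Section 9]{oh:contacton} with the boundary arc-segments $e^j_\alpha\subset\mathsf R$ causing no change since the Legendrian condition $u(e^j_\alpha)\subset R_j$ only restricts, and does not complicate, the level-set geometry of $g$.
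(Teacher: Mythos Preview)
Your proposal has a genuine circularity gap. You repeatedly lean on results --- Corollary 9.1 ($\|dg\|_{C^0}<\infty$), Lemma \ref{lem:|dw|<infty} ($\|dw\|_{C^0}<\infty$), and the exponential asymptotic convergence of $w$ at punctures --- each of which, as stated in the paper and in \cite{oh:contacton,oh-wang:CR-map1,oh-yso:index}, takes the \emph{full} energy bound $E(w)<\infty$ as a hypothesis. You flag this yourself (``one must be a little careful about circularity here'') but then proceed by ``granting $\|dg\|_{C^0}\le C_0$'' and later by assuming the asymptotics $g(\tau,t)=T\tau+O(1)$ on the ends, neither of which you have earned from $E^\pi(w)<\infty$ alone. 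The coarea step and the change of variables on the strip-like ends both rest on these unproved inputs, so the argument as written does not close.

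The intended argument (the one in \cite{oh:contacton} that the paper is simply quoting) is much more direct and avoids all of this: it is a Stokes computation that uses only the two stated hypotheses. Write $\psi'=\varphi$ with $0\le\psi\le 1$; since $\varphi$ has compact support and $g$ is proper, $g^{-1}(\supp\varphi)$ is compact in $\dot\Sigma$. Using $dg\circ j=-w^*\lambda$, one has
\[
\int_{\dot\Sigma}\varphi(g)\,dg\wedge dg\circ j
=\int_{\dot\Sigma}\bigl(-d(\psi(g)\,w^*\lambda)+\psi(g)\,w^*d\lambda\bigr).
\]
The exact term is handled by Stokes on the compact set $g^{-1}([-N,N])$ (the Legendrian boundary kills $w^*\lambda$ on $\partial\dot\Sigma$), reducing it to the level-set integral $\int_{g^{-1}(N)}w^*\lambda$, whose variation in $N$ is controlled by $\int|w^*d\lambda|\le E^\pi(w)$ via Proposition \ref{prop:energy-omegaarea}(2); hence it is uniformly bounded independently of $\varphi$. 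The bulk term is bounded by $\int_{\dot\Sigma}|w^*d\lambda|\le E^\pi(w)$ since $0\le\psi\le 1$. Taking the supremum over $\varphi\in\CC$ gives $E^\lambda(w)<\infty$. No $C^0$ derivative bound, no asymptotic convergence, and no coarea formula are needed.
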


\begin{prop}[Lemma 9.3 \cite{oh:contacton}]
 Suppose $E^\pi(w) < \infty$ and $f$ is proper. Denote by
$\gamma^+_1, \cdots, \gamma^+_k$ (resp. $\gamma^-_1,\cdots, \gamma^-_\ell$)
the Reeb chords of $R_\lambda$ asymptotic to the positive (resp. negative punctures) of
$\dot \Sigma$. Then
\beastar
E^\pi(w) & = & \sum_{j=1}^k \int \overline\gamma_j^*\lambda - \sum_{i=1}^\ell \int \underline\gamma_i^*\lambda\\
E^\lambda(w) & = & \sum_{j=1}^k \int \overline\gamma_j^*\lambda \\
E(w) & = & 2 \sum_{j=1}^k \int \overline\gamma_j^*\lambda - \sum_{i=1}^\ell \int \underline\gamma_i^*\lambda.
\eeastar
\end{prop}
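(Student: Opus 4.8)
The plan is to read all three identities off of a single \emph{flux function} attached to the contact instanton potential $f$ (the function denoted $g$ in Section~\ref{sec:offshellenergy}, characterized by $w^*\lambda\circ j = df$). Since $E^\pi(w)<\infty$ and $f$ is proper, Proposition~\ref{prop:proper-energy} gives $E(w)<\infty$, hence by the corollary above $\|df\|_{C^0}<\infty$; together with properness of $f$ this makes every sublevel set $\{f\le s\}$ compact and (for a.e.\ $s$, by Sard) every level set $\{f=s\}$ a compact $1$-manifold with boundary on $\partial\dot\Sigma$. I would set
\[
\Psi(s):=\int_{\{f=s\}}w^*\lambda\qquad(s\in\R),
\]
orienting $\{f=s\}$ as the boundary of $\{f<s\}$, and establish: (i) $\Psi$ is nondecreasing; (ii) $\Psi(+\infty)=\sum_{j=1}^k\int\overline\gamma_j^*\lambda$ and $\Psi(-\infty)=\sum_{i=1}^\ell\int\underline\gamma_i^*\lambda$; (iii) $E^\pi(w)=\Psi(+\infty)-\Psi(-\infty)$ and $E^\lambda(w)=\Psi(+\infty)$. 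The first two displayed formulas then follow from (ii) and (iii), and the third from $E=E^\pi+E^\lambda$.

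For (i) and the $E^\pi$ part of (iii) I would apply Stokes' theorem to $w^*\lambda$ over the compact region $\{s_1\le f\le s_2\}$, obtaining $\Psi(s_2)-\Psi(s_1)=\int_{\{s_1<f<s_2\}}w^*d\lambda$. The crucial --- and essentially only new --- point relative to the closed-string computation in \cite[\S 9]{oh:contacton} is that the part of $\partial\{s_1<f<s_2\}$ lying in $\partial\dot\Sigma$ contributes nothing, because $w$ maps $\partial\dot\Sigma$ into the Legendrian link $\mathsf R$ and so $w^*\lambda|_{\partial\dot\Sigma}=0$. On shell $2\,w^*d\lambda=|\del^\pi w|^2\,dA\ge 0$ by Proposition~\ref{prop:energy-omegaarea}(2), which gives monotonicity; and since $f$ is proper, $\dot\Sigma=\bigcup_{a<0<b}\{a<f<b\}$ is an exhaustion by compacta, so \eqref{eq:contact-area} together with monotone convergence yields $E^\pi(w)=\int_{\dot\Sigma}w^*d\lambda=\Psi(+\infty)-\Psi(-\infty)$.

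For (ii) I would use the exponential convergence of \cite{oh-wang:CR-map1}, \cite{oh-yso:index}: in a strip-like coordinate $(\tau,t)$ near a puncture, $w(\tau,\cdot)$ converges in $C^1$ to a reparametrization of the asymptotic chord and $f(\tau,t)-T\tau$ stays bounded, $T$ being its period; with properness of $f$ this means the positive punctures are exactly the ends along which $f\to+\infty$. Hence for $s\gg 0$ the level set $\{f=s\}$ consists of exactly one arc $\{\tau=\mathrm{const}\}\times[0,1]$ in the strip-like end of each positive puncture, and $\int_{\{f=s\}}w^*\lambda=\sum_j\int_{\{\tau=\tau_j(s)\}\times[0,1]}w^*\lambda\to\sum_j\int\overline\gamma_j^*\lambda$ as $s\to+\infty$ (the arc integral converging to the period since $\partial_t w\to T_j^+R_\lambda$ there); symmetrically $\Psi(-\infty)=\sum_i\int\underline\gamma_i^*\lambda$, and both are finite as there are finitely many punctures. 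For the $E^\lambda$ part of (iii), using the definition of the $\lambda$-energy and Proposition~\ref{prop:energy-omegaarea}(3) one rewrites $E^\lambda(w)=\sup_{\varphi\in\CC}\int_{\dot\Sigma}\varphi(f)\,df\wedge df\circ j=\sup_{\varphi\in\CC}\int_{\dot\Sigma}\varphi(f)\,|df|^2\,dA$, and the coarea formula identifies this integral with $\int_\R\varphi(s)\,\Psi(s)\,ds$; since $\Psi$ is nondecreasing with finite limit, taking $\varphi$ concentrated near $+\infty$ gives $E^\lambda(w)=\Psi(+\infty)$.

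The hard part is not any single estimate but the boundary bookkeeping that constitutes the ``slight adaptation to the open-string context'': verifying that the Legendrian condition forces $w^*\lambda$ to vanish along $\partial\dot\Sigma$ so that boundary-arc terms disappear from every Stokes and coarea computation, and that near each boundary puncture the level sets $\{f=s\}$ really are arcs ending on the two adjacent boundary arcs, so that the arc-by-arc estimate in (ii) is legitimate. The remaining points are routine: restrict to regular values $s$ and pass to the limit using continuity of $\Psi$ (a consequence of (i) and dominated convergence), and carry out a mechanical orientation check to fix the signs --- in particular $\int\overline\gamma_j^*\lambda,\ \int\underline\gamma_i^*\lambda>0$ under the standing convention that the asymptotic chords have positive period, consistently with $E^\pi(w)\ge 0$. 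With these in place the argument follows verbatim from \cite[\S 9]{oh:contacton}.
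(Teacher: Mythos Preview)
The paper gives no proof of this proposition; it is simply quoted from \cite[Lemma~9.3]{oh:contacton} under the blanket remark that the closed-string proofs there ``equally apply without change except slight adaptations to the current open-string context.'' Your proposal is a correct reconstruction of that argument via the flux function $\Psi(s)=\int_{\{f=s\}}w^*\lambda$, and you have correctly isolated the one genuine open-string adaptation: the Legendrian boundary condition forces $w^*\lambda|_{\partial\dot\Sigma}=0$, so the portions of $\partial\{s_1<f<s_2\}$ lying on $\partial\dot\Sigma$ contribute nothing to the Stokes and coarea computations. The coarea identification $\int_{\dot\Sigma}\varphi(f)\,|df|^2\,dA=\int_\R\varphi(s)\,\Psi(s)\,ds$ (using that along a level curve the unit tangent $T$ satisfies $w^*\lambda(T)=|df|$, since $w^*\lambda=-df\circ j$) and the subsequent evaluation of the supremum over $\varphi\in\CC$ as $\Psi(+\infty)$ are exactly the mechanism behind the $E^\lambda$ identity in the cited source, so your argument is both correct and faithful to the paper's intended proof.

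One cosmetic point: for large $s$ the level set $\{f=s\}$ in a strip-like end is only \emph{approximately} $\{\tau=\mathrm{const}\}\times[0,1]$, not exactly; but this does not affect the limit, since the difference $\int_{\{f=s\}\cap\,\text{end}_j}w^*\lambda-\int_{\{\tau=\tau_0\}\times[0,1]}w^*\lambda$ equals the integral of $w^*d\lambda$ over the region between the two curves and hence is controlled by the $\pi$-energy in the end, which tends to zero.
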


\section{Tame contact manifolds and tame Legendrian submanifolds}
\label{sec:tameness}

We first recall the definition of the notion of \emph{tame contact manifolds} from \cite{oh:entanglement1}.

\begin{defn}[Definition 1.8 \cite{oh:entanglement1}] Let $(M,\lambda)$ be contact manifold.
A function $\psi: M \to \R$ is called \emph{$\lambda$-tame} at infinity if $\CL_{R_\lambda} d\psi = 0$
on $M \setminus K$ for a compact subset $K$.
\end{defn}
We will mostly just say `tame' omitting `at infinity' unless there is a need
to emphasize the latter.

\begin{defn}[Contact {$J$} quasi-pseudoconvexity]
Let $J$ be a $\lambda$-adapted CR almost complex structure. We call a function $\varphi: M \to \R$
\emph{contact $J$ quasi-plurisubharmonic on $U$}
\bea
-d(d \varphi \circ J) + k d\varphi \wedge \lambda & \geq & 0 \quad \text{\rm on $\xi$}, \label{eq:quasi-pseuodconvex-intro}\\
R_\lambda \rfloor d(d\varphi \circ J) & = & g \, d\varphi \label{eq:Reeb-flat-intro}
\eea
for some functions $k, \, g$ on $U$. We call such a pair $(\varphi,J)$
a \emph{contact quasi-pseudoconvex pair} on $U$.
\end{defn}
These properties of $J$ enable us to apply the maximum principle
 in the study of the analysis of the moduli space
of contact instantons associated to $J$.  We refer readers to \cite{oh:entanglement1} for more detailed discussion on the
analysis of contact instantons on tame contact manifolds.

\begin{defn}[Tame contact manifolds, Definition 1.12 \cite{oh:entanglement1}]\label{defn:tame}
Let $(M,\xi)$ be a contact manifold, and let $\lambda$ be a contact form of $\xi$.
\begin{enumerate}
\item  We say $\lambda$ is \emph{tame on $U$}
if $(M,\lambda)$ admits a \emph{contact $J$-quasi pseudoconvex pair} on $U$ with a \emph{proper}
and $\lambda$-tame $\varphi$.
\item We call an end of $(M,\lambda)$ \emph{tame} if $\lambda$ is tame on the end.
\end{enumerate}
We say an end of contact manifold $(M,\xi)$ is tame if it admits a contact form
$\lambda$ that is tame on the end of $M$.
\end{defn}

The upshot of introducing this kind of barrier functions on contact manifold is the
amenability of the maximum principle to the pair $(\psi,J)$ in the study of contact instantons.
Unlike in \cite{oh:entanglement1}, we are considering the boundary value problem of contact instantons
under the possibly noncompact Legendrian boundary condition. Because of this
we need to consider the class of \emph{tame Legendrian submanifolds}
$\Lambda$ on a tame contact manifold $(M,\lambda)$ in general which will be amenable to
the strong maximum principle.

\begin{defn}[Tame Legendrian submanifolds] Let $(M,\xi)$ be a tame
contact manifold equipped with a tame contact form  $\lambda$
and a $\lambda$-tame function $\psi$. We say a CR-almost complex structure
$\lambda$ is \emph{tame} to $(\psi,J)$ if the triple $(\Lambda, \psi, J)$ satisfies
\be\label{eq:bdy-tame-triple}
d\psi\circ J(T\Lambda) = 0.
\ee
\end{defn}
Another way of phrasing this definition is that for such a triple 
$\grad_g \psi$ is tangent to $T\Lambda$ with respect to the triad metric
$g = g_{(\lambda, J)}$.

This notion is the contact counterpart of the notion of
\emph{gradient-sectorial} Lagrangian submanifolds in Liouville sectors
introduced by the present author in \cite{oh:gradient-sectorial}.

The following is the counterpart of \cite[Theorem 1.11]{oh:entanglement1} for the case of
bordered contact instantons \emph{under the Legendrian boundary condition}.

\begin{thm} Let $(M,\xi)$ be a contact manifold and consider the contact triad
$(M,\lambda,J)$ associated to it.
Let $\varphi$ be a $\lambda$-tame contact $J$-convex
function. Assume that $\Lambda$ is a Legendrian tangle such that the triple $(\Lambda,\varphi,J)$ is also tame.
Then for any contact instanton $w: \dot \Sigma \to M$ for the
triad $(M,\lambda, J)$,
\begin{enumerate}
\item  the composition $\varphi\circ w$ is
a subharmonic function, i.e., satisfies
$$
\Delta(\varphi\circ w) \geq 0.
$$
\item $\varphi\circ u$ satisfies strong maximum principle along the boundary $\del \dot \Sigma$.
\end{enumerate}
\end{thm}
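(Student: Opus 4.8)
The plan is to reduce both statements to the Bochner–Weitzenböck identity for the contact instanton equation together with the two defining properties of a contact quasi-pseudoconvex pair $(\varphi,J)$, exactly as in the closed-string argument of \cite{oh:entanglement1}, and then to handle the boundary contribution using the tameness of the triple $(\Lambda,\varphi,J)$. First I would write $w = (w^\pi, w^{R_\lambda})$ with respect to the splitting $TM = \xi \oplus \R\langle R_\lambda\rangle$ and recall from \cite{oh-wang:CR-map1} (and Proposition \ref{prop:energy-omegaarea}) that a contact instanton satisfies $\delbar^\pi w = 0$ together with $d(w^*\lambda\circ j) = 0$. The key local computation is
\be\label{eq:Delta-varphi-w}
\Delta(\varphi\circ w)\, dA = -w^*\bigl(d(d\varphi\circ J)\bigr) + (\text{terms from } d(w^*\lambda\circ j)=0),
\ee
where one uses $\delbar^\pi w = 0$ to identify $w^* (d(d\varphi\circ J))|_\xi$ with a nonnegative multiple of $|d^\pi w|^2$ via \eqref{eq:quasi-pseuodconvex-intro}, and uses the Reeb-flatness relation \eqref{eq:Reeb-flat-intro}, $R_\lambda \rfloor d(d\varphi\circ J) = g\, d\varphi$, together with $d(w^*\lambda\circ j) = 0$ to absorb the $R_\lambda$-component into a term that is also controlled (here one exploits that along a contact instanton the would-be bad first-order term $d\varphi(w)\wedge w^*\lambda$ pairs against an exact form). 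Combining these gives $\Delta(\varphi\circ w)\geq 0$, which is part (1).

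For part (2), the point is that $\varphi\circ w$ is subharmonic on the interior and we need the Hopf strong maximum principle to apply \emph{at} a boundary point $z_0 \in \del\dot\Sigma$ lying on an arc mapped into a component $\Lambda_j$ of the Legendrian tangle. The tameness hypothesis $d\psi\circ J(T\Lambda) = 0$, i.e.\ \eqref{eq:bdy-tame-triple}, is precisely what is needed: it says $\grad_g\varphi$ is tangent to $\Lambda$ along the boundary, hence the outward normal derivative of $\varphi\circ w$ at $z_0$ is controlled — more precisely, the boundary condition $w(\del\dot\Sigma)\subset \mathsf R$ together with \eqref{eq:bdy-tame-triple} forces $\del_\nu(\varphi\circ w)$ to vanish at an interior-maximum candidate, contradicting the Hopf lemma unless $\varphi\circ w$ is locally constant. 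I would spell this out by choosing a half-disc holomorphic coordinate around $z_0$, writing the Cauchy–Riemann type relation for $w$ along the boundary, and computing $\del_\nu(\varphi\circ w) = d\varphi(\del_\nu w)$; since $\del_\tau w \in T\Lambda$ and $\del_\nu w = -J\,\del_\tau w$ (up to conformal factor) by $\delbar^\pi w=0$ modulo the Reeb direction, \eqref{eq:bdy-tame-triple} gives $d\varphi(\del_\nu w) = d\varphi(-J\del_\tau w) = -(d\varphi\circ J)(\del_\tau w) = 0$, which is the desired Neumann-type condition making the strong maximum principle applicable.

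The main obstacle I anticipate is bookkeeping the Reeb-direction term cleanly in \eqref{eq:Delta-varphi-w}: the identity $-d(d\varphi\circ J) + k\, d\varphi\wedge\lambda \geq 0$ only holds \emph{on $\xi$}, so one must carefully separate the $\xi$-part of $w^*(d(d\varphi\circ J))$ (controlled by quasi-plurisubharmonicity and $\delbar^\pi w=0$) from the mixed $\xi$–$R_\lambda$ and pure $R_\lambda$ parts (controlled by \eqref{eq:Reeb-flat-intro} and the closedness $d(w^*\lambda\circ j)=0$), and verify that the cross terms involving $d\varphi(w)\wedge w^*\lambda$ genuinely integrate against an exact form and do not spoil the sign. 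This is exactly the computation carried out in \cite{oh:entanglement1} for the closed case, so I would state it as a lemma, cite the closed-case computation, and indicate only the modification at the boundary — namely that \eqref{eq:bdy-tame-triple} is the hypothesis that makes the boundary terms in the integration by parts vanish and makes the Hopf lemma applicable, so that no new analytic input beyond \cite{oh:entanglement1} is required.
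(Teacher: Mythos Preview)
Your proposal is correct and follows essentially the same route as the paper. For (1) the paper simply cites \cite[Theorem 1.11]{oh:entanglement1}, as you plan to; for (2) the paper argues exactly as you do---assume a boundary maximum at $z_0$, choose isothermal coordinates with $\partial/\partial x$ tangent to $\partial\dot\Sigma$, and show $\partial_\nu(\varphi\circ u)(z_0)=0$, contradicting the Hopf lemma. The paper's handling of your ``modulo the Reeb direction'' concern is the observation that since $\Lambda$ is Legendrian, $\partial u/\partial x(z_0)\in T_{u(z_0)}\Lambda\subset\xi_{u(z_0)}$, so $(du)^\pi(\partial/\partial x)=du(\partial/\partial x)$ and one can write $d\varphi\bigl(J\,\partial u/\partial x\bigr)=0$ directly from \eqref{eq:bdy-tame-triple}; you should make this Legendrian step explicit rather than leaving it as a caveat.
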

\begin{proof} Statement (1) is proved in the proof of \cite[Theorem 1.11]{oh:entanglement1}.
Therefore it remains to show that $\varphi \circ u$ also satisfies strong maximum principle.

Suppose to the contrary that $\varphi \circ u$ achieves a (local) maximum at
a point $z_0 \in \del \dot \Sigma$. Choose any isothermal coordinate $x + i y$ centered at $z_0$
such that $\frac{\del}{\del x}$ is tangent to $\del \dot \Sigma$ and
$$
\frac{\del}{\del y}\Big|_{z_0} = \frac{\del}{\del \nu}.
$$
Then we have
$$
j\frac{\del}{\del \nu} = \frac{\del}{\del x}
$$
at $z_0$. Then we have
\bea\label{eq:normal-derivative}
\frac{\del}{\del \nu}(\varphi \circ u)(z_0) & = & d\varphi du \left(\frac{\del}{\del \nu}\right) =
d\varphi du \left(j \frac{\del}{\del x}\right) \nonumber \\
& = & d\varphi\left(J \frac{\del u}{\del x}(z_0)\right) = 0
\eea
where the last equality follows from the following 3 facts combined:
\begin{itemize}
\item $\frac{\del u}{\del x}(z_0)$ is tangent to $\Lambda$ at $z_0$.
\item $T_{u(z_0)}\Lambda \subset \xi_{u(z_0)}$, and $J \xi \subset \xi$.
\item The triple $(\Lambda, \varphi, J)$ is a tame triple.
\end{itemize}
Then \eqref{eq:normal-derivative} violates the strong maximum principle, unless the map
$u$ is constant in which case there is nothing to prove.
This finishes the proof.
\end{proof}

\begin{exm} Each fiber $J^1_qN$ of the 1-jet bundle $J^1N$ with respect to 
the standard contact form $dz - \theta$
is tame with respect to any CR pseudo-convex pair $(\varphi,J)$ in the sense of \cite{oh:sectorial}.
Or  more generally any exact cylindrical Lagrangian submanifold $L$ in a Liouville manifold $(X,d\alpha)$ has a canonical lift
to a tame Legendrian submanifold $R$ in the contactization $M = X \times \R$ equipped with a contact form $\lambda = dt + \alpha$. 
\end{exm}

\section{Off-shell description of moduli spaces}
\label{subsec:off-shell}

For the exposition of this section,  we first consider general bordered
compact surfaces of arbitrary genus to be consistent with that of \cite{oh:contacton} (for the closed case).

We will be mainly interested in the two cases:
\begin{enumerate}
\item A generic nondegenerate case of $R_1, \cdots, R_k$ which in particular
are mutually disjoint,
\item The case where $R_1, \cdots, R_k = R$.
\end{enumerate}
The  second case is transversal in the Bott-Morse sense both for the Reeb
chords and for the moduli space of contact instantons, which is
rather straightforward and easier to handle, and so omitted.

We choose a $\lambda$-adapted CR-almost complex structure $J$.
Let $(\Sigma, j)$ be a bordered compact Riemann surface, and let $\dot \Sigma$ be the
punctured Riemann surface with $\{z_1,\ldots, z_k \} \subset \del \Sigma$.
For each given such on $(\dot \Sigma,j)$,
we regard the assignment
$$
\Upsilon: u \mapsto \left(\delbar^\pi u, d(u^*\lambda \circ j)\right), \quad
\Upsilon: = (\Upsilon^\pi,\Upsilon^\perp)
$$
as a section of a (infinite dimensional) vector bundle, and define the moduli space
$$
\widetilde \CM((\dot \Sigma,\del \dot \Sigma),(M,\vec R);J):= \Upsilon^{-1}(0), 
\quad \vec R = (R_1,\cdots, R_k)
$$
of finite energy maps $w: \dot \Sigma \to M$ satisfying the equation
\eqref{eq:contacton-Legendrian-bdy}. 

For the first case, all the asymptotic Reeb chords  are nonconstant and have nonzero
action $T \neq 0$.
 In particular, the relevant punctures $z_i$
are not removable. Therefore we have the decomposition of the finite energy moduli space
$$
\CM((\dot \Sigma,\del \dot \Sigma),(M,\vec R);J) =
\bigcup_{\vec \gamma \in \prod_{i=0}^{k-1}\frak{X}(R_i,R_{i+1})}
\CM((\dot \Sigma,\del \dot \Sigma),(M,\vec R);J;\vec \gamma)
$$
by the asymptotic convergence result from \cite{oh:contacton-Legendrian-bdy}.
Depending on the choice of strip-like coordinates we divide the punctures
$$
\{z_1, \cdots, z_k\} \subset \del \Sigma
$$
into two subclasses
$$
p_1, \cdots, p_{s^+}, q_1, \cdots, q_{s^-} \in \del \Sigma
$$
as the positive and negative boundary punctures. We write $k = s^+ + s^-$.

Let $\gamma^+_i$ for $i =1, \cdots, s^+$ and $\gamma^-_j$ for $j = 1, \cdots, s^-$
be two given collections of Reeb chords at positive and negative punctures
respectively. Following the notations from \cite{behwz}, \cite{bourgeois}
(but applied to the Reeb chords instead of closed Reeb orbits),
we denote by $\underline \gamma$ and $\overline \gamma$ the corresponding
collections
\beastar
\underline \gamma & = & \{\gamma_1^+,\cdots, \gamma_{s^+}^+\} \\
\overline \gamma & = & \{\gamma_1^-,\cdots, \gamma_{s^-}^-\}.
\eeastar
For each $p_i$ (resp. $q_j$), we associate the strip-like
coordinates $(\tau,t) \in [0,\infty) \times [0,1]$ (resp. $(\tau,t) \in (-\infty,0] \times [0,1]$)
on the punctured disc $D_{e^{-2 \pi K_0}}(p_i) \setminus \{p_i\}$
(resp. on $D_{e^{-2 \pi K_0}}(q_i) \setminus \{q_i\}$) for some sufficiently large $K_0 > 0$.

\begin{defn}\label{defn:Banach-manifold} We define
\be\label{eq:offshell-space}
\CF((\dot \Sigma, \del \dot \Sigma),(M, \vec R);\underline \gamma,\overline \gamma)
=: \CF(M,\vec R;\underline \gamma, \overline \gamma)
\ee
to be the set of smooth maps satisfying the boundary condition
\be\label{eq:bdy-condition}
w(z) \in R_i \quad \text{ for } \, z \in \overline{z_{i-1}z_i} \subset \del \dot \Sigma
\ee
and the asymptotic condition
\be\label{eq:limatinfty}
\lim_{\tau \to \infty}w((\tau,t)_i) = \gamma^+_i(T_i(t+t_i)), \qquad
\lim_{\tau \to - \infty}w((\tau,t)_j) = \gamma_j^-(T_j(t-t_j))
\ee
for some $t_i, \, t_j \in [0,1]$, where
$$
T_i = \int_0^1 (\gamma^+_i)^*\lambda, \quad T_j = \int_0^1 ( \gamma^-_j)^*\lambda.
$$
Here $t_i,\, t_j$ depends on the given analytic coordinate and the parameterizations of
the Reeb chords.
\end{defn}
We will fix the domain complex structure $j$ and its associated K\"ahler metric $h$.
We regard the assignment
\be\label{eq:Upsilon}
\Upsilon: w \mapsto \left(\delbar^\pi w, d(w^*\lambda \circ j)\right), \quad
\Upsilon: = (\Upsilon_1,\Upsilon_2)
\ee
as a section of the (infinite dimensional) vector bundle:
We first formally linearize and define a linear map
\be\label{eq:DUpsilonw}
D\Upsilon(w): \Omega^0(w^*TM,(\del w)^*T\vec R) \to \Omega^{(0,1)}(w^*\xi) \oplus \Omega^2(\Sigma)
\ee
where we have the tangent space
$$
T_w \CF = \Omega^0(w^*TM,(\del w)^*T\vec R).
$$

Then the following explicit formula thereof is derived in \cite{oh:contacton} for the closed
string case which also holds with Legendrian boundary condition added.
(We alert readers that the notation $B$ used here is nothing to do with the $B$ used before
to denote the second fundamental form.)

\begin{thm}[Theorem 10.1 \cite{oh:contacton}; See also Theorem 1.15
\cite{oh-savelyev}] \label{thm:linearization} In terms of the decomposition
$du = d^\pi u + u^*\lambda\otimes R_\lambda$
and $Y = Y^\pi + \lambda(Y) R_\lambda$, we have
\bea
D\Upsilon^\pi(u)(Y) & = & \delbar^{\nabla^\pi}Y^\pi + B^{(0,1)}(Y^\pi) +  T^{\pi,(0,1)}_{du}(Y^\pi)\nonumber\\
&{}& \quad + \frac{1}{2}\lambda(Y) (\CE_{R_\lambda}J)J(\del^\pi u)
\label{eq:DUpsilon1}\\
D\Upsilon^\perp(u)(Y) & = &  - \Delta (\lambda(Y))\, dA + d((Y^\pi \rfloor d\lambda) \circ j)
\label{eq:DUpsilon2}
\eea
where $B^{(0,1)}$ and $T_{du}^{\pi,(0,1)}$ are the $(0,1)$-components of $B$ and
$T_{du}^\pi$, where $B, \, T_{du}^\pi: \Omega^0(u^*TM) \to \Omega^1(u^*\xi)$ are
 zero-order differential operators given by
\be\label{eq:B}
B(Y) =
- \frac{1}{2}  u^*\lambda \otimes \left((\CE_{R_\lambda}J)J Y\right)
\ee
and
\be\label{eq:torsion-du}
T_{du}^\pi(Y) = \pi T(Y,du)
\ee
respectively.
\end{thm}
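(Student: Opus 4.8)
\noindent\emph{Proof strategy.}
The plan is to reduce everything to the closed-string computation of \cite[Theorem 10.1]{oh:contacton} (see also \cite[Theorem 1.15]{oh-savelyev}). The essential observation is that $\Upsilon = (\Upsilon_1,\Upsilon_2)$ is a first-order differential operator whose value at an interior point of $\dot\Sigma$ depends only on the $1$-jet of $u$ there; imposing the Legendrian boundary condition \eqref{eq:bdy-condition} leaves $\Upsilon$ itself untouched and only cuts out the domain of $D\Upsilon(u)$, restricting it to the subspace $\Omega^0(u^*TM,(\del u)^*T\vec R)$ of variations tangent to $\vec R$ along $\del\dot\Sigma$. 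Hence the pointwise formula for $D\Upsilon(u)(Y)$ coincides with the one obtained in the closed case, and it remains to recall that derivation, which I would organize as follows.

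First I would treat $D\Upsilon^\perp(u)$. Fix a variation $u_s$ with $u_0 = u$ and $\frac{d}{ds}\big|_{s=0}u_s = Y$. The Cartan-type formula for pullbacks under a variation of maps gives
$$
\frac{d}{ds}\Big|_{s=0} u_s^*\lambda = d(\lambda(Y)) + u^*(Y\rfloor d\lambda),
$$
and since $R_\lambda\rfloor d\lambda = 0$ one may replace $Y$ by $Y^\pi$ in the last term. Precomposing with $j$ and applying $d$, the contribution $d\big(d(\lambda(Y))\circ j\big)$ equals $-\Delta(\lambda(Y))\,dA$ by the two-dimensional identity $d(df\circ j) = -\Delta f\,dA$, while what is left is $d\big((Y^\pi\rfloor d\lambda)\circ j\big)$. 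This yields \eqref{eq:DUpsilon2}.

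Next, for $D\Upsilon^\pi(u)$, I would write $\delbar^\pi u = \tfrac12\big(d^\pi u + J\,d^\pi u\circ j\big)$ and differentiate along $u_s$ using the contact triad connection $\nabla$ of \cite{oh-wang:CR-map1}, which preserves the splitting $TM = \xi\oplus\R\langle R_\lambda\rangle$, is $J$-complex-linear in $\xi$-directions, and has torsion expressible through $d\lambda$ and $\CE_{R_\lambda}J$. Three kinds of terms appear. The leading term $\pi\nabla Y$ has $(0,1)$-part $\delbar^{\nabla^\pi}Y^\pi$. Commuting $\nabla$ past the variation brings in the torsion $\pi\,T(du,Y)$; after expanding $Y = Y^\pi + \lambda(Y)R_\lambda$ and $du = d^\pi u + u^*\lambda\otimes R_\lambda$ and invoking the torsion identities of the triad connection, the part built from $Y^\pi$ and $d^\pi u$ becomes $T^{\pi,(0,1)}_{du}(Y^\pi)$ and the remaining mixed part assembles into $B^{(0,1)}(Y^\pi)$ with $B$ as in \eqref{eq:B}. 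Finally, the variation of $J$ is felt, since $\nabla|_\xi$ is complex linear, only by the $R_\lambda$-component of $Y$, through the identity $\nabla_{R_\lambda}J = \tfrac12(\CE_{R_\lambda}J)J$; the $(0,1)$-projection of the resulting term is $\tfrac12\lambda(Y)(\CE_{R_\lambda}J)J(\del^\pi u)$, the factor $\del^\pi u$ being the $(1,0)$-part selected by that projection. Collecting these three contributions gives \eqref{eq:DUpsilon1}.

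The hard part is the bookkeeping in the $D\Upsilon^\pi$ step: one must use the precise torsion identities and the almost-complex-linearity of the contact triad connection from \cite{oh-wang:CR-map1} to separate cleanly the two zeroth-order operators $B^{(0,1)}$ and $T^{\pi,(0,1)}_{du}$ from the $\CE_{R_\lambda}J$-term, and to fix the signs and the factor $\tfrac12$; modulo those structure equations the rest is a routine if lengthy computation. Since none of it involves $\del\dot\Sigma$, the formula is unchanged when the Legendrian boundary condition is present, which is exactly what the statement asserts.
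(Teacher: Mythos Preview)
Your proposal is correct and matches the paper's treatment exactly: the paper does not re-prove this result but simply cites \cite[Theorem 10.1]{oh:contacton} and \cite[Theorem 1.15]{oh-savelyev}, remarking that the formula derived there in the closed-string case ``also holds with Legendrian boundary condition added.'' Your observation that $\Upsilon$ is a first-order local operator, so that the Legendrian condition only restricts the domain of $D\Upsilon(u)$ without altering the pointwise linearization, is precisely the justification the paper has in mind for this remark.
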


For the simplicity of notation, we also introduce the following notation.
\begin{notation}[Codomain of $\Upsilon$]\label{nota:CD} For given fixed $\vec R$, we define
\be\label{eq:CDw}
\CC\CD_{(J,\vec R),(j,w)}: = \Omega_J^{(0,1)}(w^*\xi) \oplus \Omega^2(\Sigma)
= \CH^{\pi(0,1)}_w \oplus \Omega^2(\Sigma)
\ee
and
\beastar
\CC\CD_{(J,\vec R)} & = & \bigcup_{(j,w) \in \CF} \{(j,w)\} \times \CC\CD_{(J,\vec R),(j,w)}\\
\CC\CD_{\vec R} & = & \bigcup_{J,(j,w) \in \CF} \{J\} \times \CC\CD_{(J,\vec R)}.
\eeastar
Here $\CC\CD$ stands for `codomain'.
\end{notation}

Since we will not vary $\vec R$ in the present discussion, we omit $\vec R$
from notation and simply write $\CC\CD = \CC\CD_{\vec R}$.
We also denote by $\CC\CD^{\text{\rm univ}}$ as the further union
\be\label{eq:CD-univ}
\CC\CD^{\text{\rm univ}} = \bigcup_{(J,\vec R)}\{(J,\vec R)\} \times  \CC\CD_{(J,\vec R)}.
\ee
Let $k \geq 2$ and $p > 2$. We denote by
\be\label{eq:CWkp}
\CW^{k,p}: = \CW^{k,p}((\dot \Sigma, \del \dot \Sigma),(M, \vec R); \underline \gamma,\overline \gamma)
\ee
the completion of the off-shell function space \eqref{eq:offshell-space}.
It has the structure of a Banach manifold modeled by the Banach space given by the following

\begin{defn}[Tangent space $T_w\CW^{k,p}$]\label{defn:tangent-space} We define
$$
W^{k,p} (w^*TM, (\del w)^*T\vec R; \underline \gamma,\overline \gamma)
$$
to be the set of vector fields $Y = Y^\pi + \lambda(Y) R_\lambda$ along $w$ that satisfy
\be\label{eq:tangent-element-pi}
\begin{cases}
Y^\pi \in W^{k,p}\left((\dot\Sigma, \del \dot \Sigma), (w^*\xi, (\del w)^*T\vec R)\right), \\
Y^\pi(z)\in T\vec R\quad \text{for }\, z \in \del \dot \Sigma
\end{cases}
\ee
and
\be\label{eq:tangent-element-lambda}
\begin{cases}
\lambda(Y) \in W^{k,p}((\dot \Sigma, \del \dot \Sigma),(\R, \{0\})),\\
\lambda(Y)(z) = 0 \quad \text{for }\, z \in \del \dot \Sigma
\end{cases}
\ee
\end{defn}
Here we use the splitting
$$
TM = \xi \oplus \span_\R\{R_\lambda\}
$$
where $\span_\R\{R_\lambda\}: = \CE$ is a trivial line bundle and so
$$
\Gamma(w^*\CE) \cong C^\infty\left((\dot \Sigma, \del \dot \Sigma), (\R,\{0\})\right).
$$
The above Banach space is decomposed into the direct sum
\be\label{eq:tangentspace}
W^{k,p}((\dot\Sigma,\del \dot \Sigma),( w^*\xi, (\del w)^*T\vec R))
\bigoplus W^{k,p}((\dot \Sigma,\del \dot \Sigma), ( \R, \{0\})) \otimes R_\lambda :
\ee
by writing $Y = (Y^\pi, g R_\lambda)$ with a real-valued function $g = \lambda(Y(w))$ on $\dot \Sigma$.
Here we measure the various norms in terms of the triad metric of the triad $(M,\lambda,J)$.

Now for each given $J$ and $w \in \CW^{k,p}((\dot \Sigma,\del \dot \Sigma), (M, \vec R);\underline \gamma,\overline \gamma)$,
we consider the Banach space
$$
\Omega^{(0,1)}_{k-1,p}(w^*\xi;J): = W^{k-1,p}(\Lambda_J^{(0,1)}(w^*\xi))
$$
the $W^{k-1,p} $-completion of $\Omega^{(0,1)}(w^*\xi) = \Gamma(\Lambda^{(0,1)}(w^*\xi))$ and form the bundle
\be\label{eq:CH01}
\CH_{k-1,p}^{(0,1)}(M,J): = \bigcup_{w \in \CW^{k,p}} \Omega^{(0,1)}_{k-1,p}(w^*\xi;J)
\ee
over $\CW^{k,p}$.

\begin{defn}\label{defn:CHCM01} We associate the Banach space
\be\label{eq:CH01-w}
\CC\CD^{(0,1)}_{k-1,p}(M,\lambda;J)|_w: = \Omega^{(0,1)}_{k-1,p}(w^*\xi;J) \oplus \Omega^2_{k-2,p}(\dot \Sigma)
\ee
to each $w \in \CW^{k,p}$ and form the bundle
\beastar
\CC\CD^{(0,1)}_{k-1,p}(M,\lambda;J)& : = & \bigcup_{w \in \CW^{k,p}} \CC\CD^{(0,1)}_{k-1,p}(M,\lambda)|_w\\
& \cong & \CH_{k-1,p}^{(0,1)}(M,J) \bigoplus \left(\CW^{k,p} \times \Omega^2_{k-2,p}(\dot \Sigma)\right)
\eeastar
over the Banach manifold $\CW^{k,p}$ given in \eqref{eq:CWkp}.
\end{defn}

Then we can regard the assignment
$$
\Upsilon_1: w \mapsto \delbar^\pi w
$$
as a smooth section of the bundle $\CH_{k-1,p}^{(0,1)}(M,\lambda) \to \CW^{k,p}$. Furthermore
the assignment
$$
\Upsilon_2: w \mapsto d(w^*\lambda \circ j)
$$
defines a smooth section of the trivial bundle
$$
\Omega^2_{k-2,p}(\Sigma) \times \CW^{k,p} \to \CW^{k,p}.
$$
We summarize the above discussion into the following lemma.

\begin{lem}\label{lem:Upsilon} Consider the vector bundle
$$
\CC\CD_{k-1,p}(M, \vec R;J) \to \CW^{k,p}.
$$
The map $\Upsilon$ continuously extends to a continuous section still denoted by
$$
\Upsilon: \CW^{k,p} \to \CC\CD_{k-1,p} (M,\vec R;J).
$$
\end{lem}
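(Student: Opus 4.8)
The plan is to reduce the claim to the local coordinate estimates for first-order quasi-linear operators, exactly as in the closed-string treatment of \cite[Section 11.2]{oh:contacton}, and then to isolate the two places — the boundary arcs and the strip-like ends — where the Legendrian boundary condition and the asymptotic constraints could interfere.

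First I would note that both components of $\Upsilon$ are first-order differential operators assembled from $w$ and its first derivatives by composition with the \emph{smooth} tensors $\lambda$, $J$, $j$ and the splitting $TM = \xi \oplus \R\langle R_\lambda\rangle$. Since $k \geq 2$ and $p > 2$, on the two-dimensional $\dot\Sigma$ one has the continuous Sobolev embedding $W^{k,p}\hookrightarrow C^{k-1}$ together with the multiplication property $C^{k-1}\cdot W^{k-1,p}\subset W^{k-1,p}$; hence, by the standard Sobolev composition and multiplication lemmas, $w^*\lambda$, $w^*\lambda\circ j$ and $\delbar^\pi w$ all lie in $W^{k-1,p}$, and therefore $d(w^*\lambda\circ j)\in W^{k-2,p}$. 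These are precisely the fibers $\Omega^{(0,1)}_{k-1,p}(w^*\xi;J)$ and $\Omega^2_{k-2,p}(\dot\Sigma)$ of $\CC\CD_{k-1,p}(M,\vec R;J)$ from Definition \ref{defn:CHCM01}. Continuity of $w \mapsto \Upsilon(w)$ is then the continuity of a Nemytskii operator in the supercritical range; in fact the explicit linearization formula of Theorem \ref{thm:linearization} exhibits a first derivative $D\Upsilon(w)$ depending continuously on $w$, so on each coordinate chart $\Upsilon$ is even $C^\infty$.

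Next I would take care of the bundle structure of the target. The $\Upsilon_2$-component lands in the \emph{trivial} bundle $\Omega^2_{k-2,p}(\dot\Sigma)\times\CW^{k,p}$, so there is nothing to trivialize there. For $\Upsilon_1$ the fiber $\Omega^{(0,1)}_{k-1,p}(w^*\xi;J)$ varies with $w$, and I would trivialize $\CH^{(0,1)}_{k-1,p}(M,J)$ near a given $w_0$ by parallel transport along the short geodesics of the triad metric $g_{(\lambda,J)}$ from $w$ back to $w_0$, as in \cite{oh:contacton}; this is of class $C^{k-1}$ because a $W^{k,p}$-small perturbation of $w_0$ is $C^{k-1}$-small, which shows $\CH^{(0,1)}_{k-1,p}(M,J)\to\CW^{k,p}$ is a genuine Banach vector bundle and that, in such a trivialization, $\Upsilon_1$ is a map between open subsets of fixed Banach spaces to which the previous paragraph applies. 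To chart $\CW^{k,p}$ itself near $w_0$ one uses instead an $\vec R$-adapted exponential map (modifying the triad metric near each $R_j$ so that $R_j$ becomes totally geodesic, as is standard for Lagrangian-type boundary conditions), which converts the boundary constraint $w(e^j_\alpha)\subset R_j$ into the linear conditions $Y^\pi(z)\in T\vec R$, $\lambda(Y)(z)=0$ of Definition \ref{defn:tangent-space}; since the target fibers carry \emph{no} boundary condition, their trivializations need no such adaptation, and the computations above go through with boundary terms simply appended. This is the precise sense in which the closed-string argument ``can be duplicated with incorporation of boundary conditions''.

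The step I expect to be the main obstacle is the bookkeeping near the punctures. There $\CW^{k,p}$ is modeled, in the strip-like coordinates $(\tau,t)$ of \eqref{eq:limatinfty}, on an exponentially weighted Sobolev space with the asymptotic iso-speed Reeb chord $\gamma^{\pm}$ subtracted off, and one must verify that $\Upsilon$ sends this asymptotic model into the corresponding weighted target space — i.e. that $\delbar^\pi w$ and $d(w^*\lambda\circ j)$ decay at the expected exponential rate. This is exactly where the exponential convergence estimates of \cite{oh-wang:CR-map1}, \cite{oh-yso:index} and the invertibility of the asymptotic operator (guaranteed by the nondegeneracy of Hypothesis \ref{hypo:nondegeneracy}) are invoked, verbatim from the closed-string case; once this is in place, the continuity — indeed smoothness — of the extended section $\Upsilon\colon \CW^{k,p}\to\CC\CD_{k-1,p}(M,\vec R;J)$ follows from the Nemytskii-operator continuity of the first two paragraphs.
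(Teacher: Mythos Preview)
Your proposal is correct and matches the paper's approach: the paper does not give a separate proof of this lemma but presents it as a summary of the preceding off-shell setup, referring to \cite[Section 11.2]{oh:contacton} for the closed-string case and remarking that the argument ``can be duplicated with incorporation of boundary conditions'' --- which is exactly the reduction you carry out. If anything, you supply more detail (the Nemytskii-operator continuity, the $\vec R$-adapted exponential chart, and the weighted-Sobolev bookkeeping at the punctures) than the paper records explicitly.
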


With these preparations, the following is a consequence of the exponential estimates established
in \cite{oh:contacton-Legendrian-bdy}.
(See \cite{oh-wang:CR-map1} for the closed string case of vanishing charge.)

\begin{prop}\label{prop:exp-decay}
Assume $\lambda$ is nondegenerate.
Let $w:\dot \Sigma \to M$ be a contact instanton and let $w^*\lambda = a_1\, d\tau + a_2\, dt$.
Suppose
\bea
\lim_{\tau \to \infty} a_{1,i} = 0, &{}& \, \lim_{\tau \to \infty} a_{2,i} = T(p_i)\nonumber\\
\lim_{\tau \to -\infty} a_{1,j} = 0, &{}& \, \lim_{\tau \to -\infty} a_{2,j} = T(q_j)
\eea
at each puncture $p_i$ and $q_j$.
Then $w \in \CW^{k,p}(M, \vec R;\underline \gamma,\overline \gamma)$.
\end{prop}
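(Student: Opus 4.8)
The plan is to reduce the statement to the exponential $C^\infty$-convergence of finite energy contact instantons near a \emph{nondegenerate} puncture, which is the main analytic input already established in \cite{oh:contacton-Legendrian-bdy} (the interior, vanishing-charge prototype being \cite{oh-wang:CR-map1}), and then to repackage that convergence as membership in the Sobolev completion $\CW^{k,p}$ of Definition \ref{defn:Banach-manifold}. The hypotheses $a_{1,i}\to 0$, $a_{2,i}\to T(p_i)$ (and their negative-puncture analogues) are precisely the ones under which those exponential estimates apply once one knows that $w(\tau,\cdot)$ subconverges to an actual Reeb chord; the nondegeneracy of $\lambda$ is what pins the limit down and produces the decay.

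First I would work near a fixed positive puncture $p_i$ in a strip-like coordinate $(\tau,t)\in[0,\infty)\times[0,1]$, the negative punctures being symmetric. Since $w$ has finite energy, Lemma \ref{lem:|dw|<infty} gives $\|dw\|_{C^0}<\infty$; combined with $E^\pi(w)<\infty$ and the contact instanton equation $\delbar^\pi w=0$, $d(w^*\lambda\circ j)=0$, the standard no-loss-of-energy argument shows $d^\pi w(\tau,\cdot)\to 0$ in $C^0([0,1])$ as $\tau\to\infty$, so that $dw(\tau,\cdot)$ becomes asymptotically tangent to $R_\lambda$. Together with $a_{1,i}\to 0$ and $a_{2,i}\to T(p_i)$ this forces, via interior and boundary elliptic estimates for $\delbar^\pi$ under the Legendrian boundary condition $w(e^i_\alpha)\subset R_i$, any $C^\infty$-subsequential limit of $w(\tau,\cdot)$ to be an iso-speed Reeb chord $\gamma^+_i$ of period $T(p_i)$ between the two relevant components of $\vec R$. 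Because $\lambda$ is nondegenerate (Theorem \ref{thm:Reeb-chords-lambda-text}; equivalently Hypothesis \ref{hypo:nondegeneracy}) the set of such chords is discrete, hence the limit is unique and $w(\tau,t)\to\gamma^+_i(T(p_i)(t+t_i))$ for a well-defined shift $t_i\in[0,1]$ fixed by the chosen coordinate; the same at each negative puncture gives $\gamma^-_j$ and $t_j$. Now I would invoke the exponential decay estimates of \cite{oh:contacton-Legendrian-bdy}: since the asymptotic limit $\gamma^+_i$ is nondegenerate, writing $w(\tau,t)=\exp_{\gamma^+_i(T(p_i)(t+t_i))}\bigl(\zeta_i(\tau,t)\bigr)$ for $\tau$ large one has $|\nabla^\ell\zeta_i(\tau,t)|\le C_\ell e^{-\delta\tau}$ for all $\ell$ and some $\delta>0$ controlled by the smallest absolute eigenvalue of the asymptotic operator, whose invertibility is exactly the transversality $\Psi_{\gamma^+_i}(T_{\gamma(0)}R)\pitchfork T_{\gamma(1)}R'$ of Definition \ref{defn:nondegeneracy-chords}. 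Assembling: on a compact exhaustion of $\dot\Sigma$ the map $w$ is smooth, hence in $W^{k,p}_{\rm loc}$; on each half-strip the correction $\zeta_i$ (resp. $\zeta_j$) lies in $W^{k,p}$ because exponential decay of $\zeta$ together with its derivatives up to order $k$ immediately yields $\int_{[0,\infty)\times[0,1]}\bigl(\sum_{\ell\le k}|\nabla^\ell\zeta_i|\bigr)^p<\infty$; and $\zeta_i|_{\partial}\in TR$ together with the Legendrian boundary condition \eqref{eq:contacton-Legendrian-bdy} matches the constraints in Definition \ref{defn:tangent-space}. Thus $w$ satisfies \eqref{eq:bdy-condition} and \eqref{eq:limatinfty} and differs from a fixed smooth reference map in the model of Definition \ref{defn:Banach-manifold} by a $W^{k,p}$ vector field, i.e.\ $w\in\CW^{k,p}((\dot\Sigma,\del\dot\Sigma),(M,\vec R);\underline\gamma,\overline\gamma)$.

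The main obstacle is the step that upgrades the prescribed convergence of the $\lambda$-components $a_1,a_2$ to genuine $C^\infty$-subconvergence of $w(\tau,\cdot)$ to a Reeb chord, and, once the limit is nondegenerate, the exponential decay of $\zeta$ in every $C^\ell$-norm — but both are exactly the content of the asymptotic analysis of \cite{oh:contacton-Legendrian-bdy} (with its interior counterpart \cite{oh-wang:CR-map1}), so in the present paper the proposition is essentially a packaging statement. The only genuinely new bookkeeping is the verification that exponentially weighted decay compatible with the Legendrian boundary values lands in the specific Banach model of Definition \ref{defn:Banach-manifold}, which is routine.
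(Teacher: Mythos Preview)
Your proposal is correct and matches the paper's approach: the paper does not give a proof but simply states that the proposition is a consequence of the exponential estimates established in \cite{oh:contacton-Legendrian-bdy} (with \cite{oh-wang:CR-map1} cited for the closed-string vanishing-charge case). Your write-up correctly unpacks what that citation is doing---reducing to subsequential convergence to a Reeb chord, using nondegeneracy for uniqueness and invertibility of the asymptotic operator, invoking the exponential $C^\infty$-decay, and then observing that exponential decay trivially implies $W^{k,p}$-membership in the model of Definition~\ref{defn:Banach-manifold}---and you are right that the only new content here is the routine bookkeeping of matching the decay to the specific Banach manifold structure.
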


Now we are ready to define the moduli space of contact instantons with prescribed
asymptotic condition.
\begin{defn}\label{defn:tilde-modulispace} Consider the zero set of the section $\Upsilon$
\be\label{eq:defn-tildeMM}
\widetilde \CM(M,\lambda, \vec R;\underline \gamma,\overline \gamma;J) =  \Upsilon^{-1}(0)
\ee
in the Banach manifold $\CW^{k,p}(M,\lambda,\vec R;\underline \gamma,\overline \gamma)$.
We write $w \sim w'$ for two elements therefrom if there is a biholomorphism
$\varphi$ of the punctured bordered Riemann surfaces $(\dot \Sigma, \del \dot \Sigma)$ such that
$w' = w \circ \varphi$, and define the quotient space and
\be\label{eq:defn-MM}
\CM(M,\lambda, \vec R;\underline \gamma,\overline \gamma;J)
= \widetilde \CM(M,\lambda, \vec R;\underline \gamma,\overline \gamma;J)/\sim
\ee
to be the set of equivalence classes of contact instantons $w$ under the equivalence relation $\sim$.
\end{defn}
This definition does not depend on the choice of $k$ or $p$  as long as $k\geq 2, \, p>2$.
We call an equivalence class $[w]$ an isomorphism class of contact instantons and often just write it as $w$
by an abuse of notation whose meaning should be clear from the give context.

Recall that the glued domain is homeomorphic to
the original domain $\Sigma$ and so 
$$
\chi(\widetilde \Sigma) = \chi(\Sigma) = 1 - \widetilde g = 2- 2g -h
$$
where $\widetilde g$ is the genus of the complex double $\Sigma_\C$ of $\Sigma$ \cite{katz-liu}.

The following index formula is proved in \cite{oh-yso:index}.
 
\begin{thm}[Theorem 11.4 \cite{oh-yso:index}]\label{thm:index-formula} 
Let $u$ be a contact instanton 
satisfying \eqref{eq:bdy-condition} and \eqref{eq:limatinfty}. Then
we have
\beastar
\Index D\Upsilon(u) & = & 
\Index L_{\rm glued} \\
&{}& - \sum_{i,\alpha}\Index L_{w_{i,\alpha}^+} +  \sum_{j,\beta} \Index L_{w_{j,\beta}^-} \\
&=& \mu\left(E_{u;(\vec \ell,\vec w)}, \alpha_{u;(\vec \ell,\vec w)}\right) 
+ n (2-2g-h)\\
& {} & - \sum_{i,\alpha}\mu\left([w_{i,\alpha}^+,\gamma^+_{i,\alpha}];\alpha^+_{i,\alpha}\right) 
+ \sum_{j,\beta}\mu\left([w_{j,\beta}^-,\gamma^-_{j,\beta}];\alpha^-_{j,\beta}\right). \\
\eeastar
\end{thm}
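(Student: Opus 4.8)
The plan is to reduce the index computation to the standard setup for elliptic operators on a punctured bordered Riemann surface and then invoke the gluing formula together with the known Maslov-type index computation for the model operators on strips. First I would recall from \cite{oh:contacton} and \cite{oh-yso:index} that the linearized operator $D\Upsilon(u)$, thanks to the explicit formula of Theorem~\ref{thm:linearization}, differs from the standard Cauchy--Riemann operator $\delbar^{\nabla^\pi}$ on $w^*\xi$ coupled with the Laplacian on the $R_\lambda$-component only by zeroth-order terms ($B^{(0,1)}$, $T^{\pi,(0,1)}_{du}$, and the $(\CE_{R_\lambda}J)J$-term), hence has the same Fredholm index as the model operator $L$ obtained by dropping these compact perturbations and, after the conformal rescaling, the $\Omega^2$-factor contributes nothing to the index over the disc-type surface because of the Bott--Morse nondegeneracy and the exact form of $w^*\lambda\circ j$. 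So $\Index D\Upsilon(u) = \Index L$, where $L$ is a real-linear CR operator on the bundle pair $(w^*\xi, (\del w)^*T\vec R)$ over $(\dot\Sigma,\del\dot\Sigma)$ with strip-like ends, together with the trivial summand on the $R_\lambda$-line which is a Dirichlet Laplacian of index $0$.

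Next I would pass to the glued (doubled-neck) domain $\widetilde\Sigma$: following the exposition of \cite{oh-yso:index}, one caps off each strip-like end $U^\pm_{i,\alpha}$ using the chosen maps $w^\pm_{i,\alpha}$ and the graded bridges $(\ell_{ij},\alpha_{ij})$, producing a closed-up bundle pair whose operator is $L_{\rm glued}$ on a surface with $\chi(\widetilde\Sigma)=\chi(\Sigma)=2-2g-h$. The excision/gluing principle for Fredholm indices of CR operators (linear gluing along strip-like ends, as in \cite{oh:contacton-gluing}) then yields
$$
\Index L = \Index L_{\rm glued} - \sum_{i,\alpha}\Index L_{w^+_{i,\alpha}} + \sum_{j,\beta}\Index L_{w^-_{j,\beta}},
$$
the signs reflecting that positive punctures are outputs and negative ones are inputs in our conventions. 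This is the first displayed line of the theorem.

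For the second line I would apply the Riemann--Roch / Maslov index formula for bundle pairs over a bordered surface: $\Index L_{\rm glued} = \mu(E_{u;(\vec\ell,\vec w)},\alpha_{u;(\vec\ell,\vec w)}) + n\,\chi(\widetilde\Sigma)$, where the rank-$2n$ symplectic bundle pair and its Lagrangian boundary data are exactly the ones assembled in Definition~\ref{defn:symp-bundle-pair} and \eqref{eq:tilde-alpha} after gluing, and $n\,\chi(\widetilde\Sigma)=n(2-2g-h)$ is the contribution of the Euler characteristic (the $R_\lambda$-line contributing $0$, as noted). For each capping piece $w^\pm_{i,\alpha}$ the operator $L_{w^\pm_{i,\alpha}}$ lives over the half-disc $[0,1]^2$ with the boundary data described in Section~4's ``Symplectic bundle pair associated to asymptotic Reeb chords,'' so by Definition~\ref{defn:grading-chord} its index is precisely the Maslov index $\mu([w^\pm_{i,\alpha},\gamma^\pm_{i,\alpha}];\alpha^\pm_{i,\alpha})$. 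Substituting these into the first line gives the asserted formula.

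The main obstacle I expect is bookkeeping rather than conceptual: getting the signs and the normalizations of the Maslov contributions at positive versus negative punctures exactly consistent with the ``pentagon'' convention of Remark~\ref{rem:pentagon} and the time-reversal involution \eqref{eq:tau}, and verifying that the zeroth-order perturbation terms and the $\Omega^2$-component genuinely contribute nothing to the index under the Legendrian boundary condition (this uses that $\lambda(Y)$ satisfies homogeneous Dirichlet boundary conditions, cf.\ \eqref{eq:tangent-element-lambda}, so that summand is an isomorphism-index-zero Laplacian). Once the neck-stretching/gluing identity for indices is granted from \cite{oh:contacton-gluing} and the additivity of the Maslov index under concatenation of Lagrangian paths is used to split $\widetilde\alpha_{([w,\gamma];\alpha_\ell)}$ along the glued necks, the computation closes.
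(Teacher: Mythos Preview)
The paper does not contain a proof of this theorem: it is quoted verbatim as Theorem 11.4 of \cite{oh-yso:index}, and the present paper simply imports the result and moves on to the dimension formula. So there is nothing in the paper to compare your proposal against.

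That said, your outline is the standard and expected one, and it matches in spirit what the cited reference carries out: reduce $D\Upsilon(u)$ via Theorem~\ref{thm:linearization} to a block-triangular operator whose diagonal is a real-linear CR operator on $(w^*\xi,(\del w)^*T\vec R)$ plus a Dirichlet-type Laplacian on the Reeb component, cap the strip-like ends with the half-plane operators $L_{w^\pm_{i,\alpha}}$ attached to the graded bridges, apply linear gluing/excision for Fredholm indices, and then evaluate each piece by boundary Riemann--Roch. Your identification of $\Index L_{\rm glued}$ with $\mu(E_{u;(\vec\ell,\vec w)},\alpha_{u;(\vec\ell,\vec w)}) + n\chi(\Sigma)$ and of $\Index L_{w^\pm}$ with $\mu([w^\pm,\gamma^\pm];\alpha^\pm)$ is exactly Definition~\ref{defn:grading-chord}. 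The one place where you are a bit quick is the claim that the $\Omega^2$-component contributes index zero: on a punctured surface with strip-like ends the Dirichlet Laplacian is not automatically index zero without analyzing the asymptotic operator, and in \cite{oh-yso:index} this is handled by showing the Reeb-directional summand, with the boundary condition \eqref{eq:tangent-element-lambda} and the nondegenerate asymptotics, is an isomorphism (hence index zero) after the appropriate weighted-Sobolev setup. Modulo that verification your sketch is correct and is essentially the argument of the cited paper.
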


Assuming $h \geq 1$ and the stability condition i.e., 
$$
g \geq 1, \text{\rm or }\, (g,h) = (0,1), \, k \geq 3, \,  \text{\rm or }\, 
(g,h) = (0,2), \, m \geq 1,
$$
the dimension of the moduli space
of bordered stable curves with $k$ marked points
$$
\CM_{(g;h),m}:= \left\{ ((\Sigma,j), (z_1, \cdots, z_m)) \mid \Sigma \, \text{of type $(g;h)$} \right\}
\Big\slash\Aut(\Sigma,j)
$$
is given by
$$
\text{\rm vit. dim} \CM(\dot \Sigma,\CE;\vec \gamma;B) = 
\Index D\Upsilon(u) + \dim \CM_{(g;h),m}.
$$
Furthermore  we have the formula
\bea\label{eq:dim-CMghk}
\dim \CM_{(g;h),m} & = & (\chi(\Sigma)+ m-1) - \dim \Aut(\Sigma, \vec z) \nonumber\\
& = & \begin{cases}  1-2g-h+m  \quad & \text{\rm unless $(g,h) = (0,1)$ or $(g,h) = (0,2)$}\\
m-1 \quad & \text{\rm if $(g,h) = (0,2)$}\\
m-3 \quad & \text{\rm if $(g,h) = (0,1)$}.
\end{cases}
\eea
Combining Theorem \ref{thm:index-formula} and \eqref{eq:dim-CMghk}, we can explicitly
compute the  virtual dimension of the moduli space.

\part{Contact instanton DGAs of Legendrian links}

By now, we have a pretty complex system of `networks' of asymptotic Reeb chords and 
bridges joining components of the Legendrian link $\vec R$ and various contact instantons
and off-shell maps $u$ and $w$ carrying their asymptotic chords and bridges. Because of its complexity,
enumerating them needs some strategy not only for the clarity of exposition but also to properly
encode their \emph{entanglement structure}. Our strategy is to enumerating them 
in the following order:
\begin{enumerate}
\item We start with enumerating the 
components of the link $\vec R$ and the given marked points of $\del \Sigma$ (or equivalently
the punctures of $\del \dot \Sigma$). These are independent of the contact instanton map $u$.
\item For given contact instanton $u$, we enumerate them according to those of the 
punctures of $\dot \del \Sigma$ respectively.
\item Then for each given asymptotic Reeb chord $\gamma$ of $u$, we use the values 
$s(\gamma)$ and $t(\gamma)$ of the source and the target maps $s$ and $t$ respectively
and enumerate the associated Legendrian submanifolds by $R_{s(\gamma)}$ and $R_{t(\gamma)}$.
\item Next we enumerate the bridge from $R_{s(\gamma)}$ to $R_{t(\gamma)}$ by
$$
\ell_{s(\gamma)t(\gamma)}
$$
which was given when we equip the link $\vec R$ with the bridges between its connected components.
\item Finally we use the enumeration of the bounding square $[w,\gamma]$ using the enumeration of
the asymptotic chord $\gamma$ and omitting its dependence on the associated bridge $\ell$ at $t=1$.
This is because the enumeration of $\ell$ is automatically given by the associated asymptotic chord $\gamma$.
\end{enumerate}

\section{Moduli spaces of disc contact instantons}
\label{sec:on-shell}

Let $(\Sigma, j)$ be a bordered compact Riemann surface  of disk-type with boundary marked points
$$
\vec z = \{z_1,\ldots, z_m \} \subset \del \Sigma,
$$
 and let $\dot \Sigma$ be the punctured Riemann surface. 
 
We partition $\vec z$ into $\vec z = \vec z^+ \sqcup \vec z^-$ 
with $m = s^+ + s^-$ as in \eqref{eq:vec-zj}. 
We specialize to the case
\be\label{eq:null}
s^- = 1, \, s^+ = k, \quad m = k+1
\ee
which enters in the construction of contact instanton cohomology \cite{oh-yso:spectral}, and
will enter in the construction of a Fukaya-type category of contact manifolds in the present paper. 
We set $z_0$ to be a positive puncture and the rest of $z_i$'s for $i = 1, \cdots k$ to be negative ones
adopting the enumeration strategy of the book \cite{fooo:book1,fooo:book2}.

Let $\vec{R} = (R_1,\ldots,R_k)$ be a Legendrian link and 
define the set of maps
$$
C^\infty(\dot \Sigma,\vec R;\vec \gamma) 
= C^\infty(\dot \Sigma,\vec R;\underline \gamma,\overline \gamma), \quad \vec \gamma = \underline \gamma \cup \overline \gamma
$$
to be the set of all smooth $u: \dot \Sigma \to M$ on $\dot \Sigma$ such that 
\be\label{eq:uzjpj}
\begin{cases}
 u\left((\infty,t)_{z^+_{i,\alpha}}\right) = \gamma^+_{i,\alpha}(t)  \\
 u(e^i_{\alpha-1}) \subset R_{s(\gamma^+_{i,\alpha})}, \quad u(e^i_{\alpha}) \subset R_{t(\gamma^+_{i,\alpha})} 
 \end{cases}
 \ee
 where we have
 $$
 \gamma^+_{i,\alpha} \in 
 \mathfrak{X}\left(R_{s(\gamma^+_{i,\alpha})},R_{t(\gamma^+_{i,\alpha})}\right)
 $$
 at the positive puncture $p^+_{i,\alpha}$ and that similar conditions hold at the negative
 punctures $p^-_{j,\beta}$. 
 
We equip $\vec R$ with a system of graded bridges 
$$
\vec \ell = \{\ell_{ab}\}_{a,b \in \underline k}.
$$
For a given collection of Reeb chords $\vec \gamma = (\underline \gamma,\overline \gamma)$,
we write
\be\label{eq:ell+ialpha}
\ell^+_{i,\alpha}: = \ell_{s(\gamma^+_{i,\alpha})t(\gamma^+_{i,\alpha})}
\ee
to be the bridge from $R_{s(\gamma^+_{i,\alpha})}$ to $R_{t(\gamma^+_{i,\alpha})}$ and similarly for
$\ell^-_{j,\beta}$.

Let $\mathcal{E} = (\vec R, \vec \ell)$, 
be a graded bridged Legendrian link.  We denote by
$$
\mathcal{F}((\dot{\Sigma},\del \dot{\Sigma}), \mathcal{E}; \gamma_0, \overline \gamma)
\subset C^\infty(\dot \Sigma,\vec R;\gamma_0, \overline \gamma) 
$$
the set of such maps satisfying \eqref{eq:bdy-condition}-\eqref{eq:limatinfty} 
and denote by
$$
 \pi_2(\mathcal{E};\gamma_0, \overline \gamma) 
 = \pi_2(\dot \Sigma;\vec{R},\vec \ell;\gamma_0, \overline \gamma)
 $$
the set of homotopy classes of such maps.  We denote by 
 $$
\CF(\dot \Sigma,\CE;\vec \gamma; B) \subset
C^\infty(\dot \Sigma,\CE;\gamma_0, \overline \gamma)
$$ 
the subset consisting of the maps $u$ in class 
$ [u] = B \in \pi_2(\dot \Sigma,\CE;c, \overline b) = :\pi_2(\CE;\gamma_0, \overline \gamma)$.
We recall the nonlinear section $\Upsilon$ given in \eqref{eq:Upsilon}
and consider its extension to the Sobolev space
$$
\CW^{k,p} := \CW^{k,p}((\dot{\Sigma},\del \dot{\Sigma}), \mathcal{E}; \gamma_0, \overline \gamma)
$$
restricted to the subset of elements $u$ in class $[u]=B$.

We will fix a domain complex structure $j$ and its associated K\"ahler metric $h$.
Before launching on our computation,  we first
remind readers of the decomposition $Y = Y^\pi + \lambda(Y)\, R_\lambda$.
Noting that $Y^\pi$ and $\lambda(Y)$ are independent of each other, we will write
\be\label{eq:Y-decompose}
Y = Y^\pi + f R_\lambda, \quad f: = \lambda(Y)
\ee
where $f: \dot \Sigma \to \R$ is an arbitrary function satisfying the boundary condition
\be\label{eq:Y-bdy-condition}
Y^\pi(\del \dot \Sigma) \subset T\vec R, \quad f|_{\del \dot \Sigma} = 0
\ee
by the Legendrian boundary condition satisfied by $Y$.

By this decomposition and the associated decomposition of the codomain,
we are led to the linearized operator of the form
$$
D\Upsilon(u): \Omega^0(u^*TM,(\del u)^*T\vec R) \to \Omega^{(0,1)}(u^*\xi) \oplus \Omega^2(\Sigma)
$$
of each contact instanton $u$, i.e., a map $u$ satisfying $\Upsilon(u) = 0$.
We regard the assignment
\be\label{eq:Upsilon}
\Upsilon: u \mapsto \left(\delbar^\pi u, d(u^*\lambda \circ j)\right), \quad
\Upsilon: = (\Upsilon^\pi,\Upsilon^\perp)
\ee
as a section of the (infinite dimensional) vector bundle: Using the decomposition
$TM = \xi \oplus \R\langle R_\lambda \rangle$, we write $\Upsilon = (\Upsilon^\pi, \Upsilon^\perp)$.
Then we can regard the assignment
$$
\Upsilon^\pi: u \mapsto \delbar^\pi u
$$
as a smooth section of some infinite dimensional vector bundle and
the assignment
$$
\Upsilon^\perp: u \mapsto d(u^*\lambda \circ j)
$$
as a map to $\Omega^2(\Sigma)$.
(We refer to \cite{oh:contacton-transversality} for the precise off-shell framework
for these operators.) 
Then we consider the moduli space
$$
\CM((\dot{\Sigma},\del \dot{\Sigma}), \mathcal{E}; \gamma_0, \overline \gamma)
 : = \Upsilon^{-1}(0), \quad \vec R = (R_0,\cdots, R_k)
$$
 of finite energy maps $u: \dot \Sigma \to M$ satisfying the equation
\eqref{eq:contacton-Legendrian-bdy}.

Then we recall the following lemmata from \cite{fooo:anchored}.

\begin{lem}[Compare with Lemma 5.8 \cite{fooo:anchored}]\label{lem:dim-deg} Let 
$\gamma, \, w, \, \alpha_{01}$
be as above, and put
$$
w^+(s,t) = w(s,1-t), \quad \lambda_{10}(t) = \alpha_{01}(1-t).
$$
Then
\be\label{eq:degPD}
\mu([\gamma,w];\alpha_{01}) + \mu([\widetilde \gamma,w^+];\alpha_{10}) = n.
\ee
\end{lem}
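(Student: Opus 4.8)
The plan is to identify both Maslov indices appearing in \eqref{eq:degPD} as Maslov indices of bundle pairs over the square $[0,1]^2$ (in the pentagon-corrected sense of Remark \ref{rem:pentagon}), built from a diagonal trivialization $\Phi$ of $w^*TM$ as in Definition \ref{defn:Phi}, and then to observe that concatenating the boundary data of the two squares produces a \emph{closed} Lagrangian loop whose total Maslov index is forced to equal $n$ by a local normalization at the single corner where the chord asymptotics enter. The key point is the well-known fact that if $\Lambda_0, \Lambda_1$ are two transverse Lagrangian subspaces of $(\R^{2n},\omega_0)$ and one forms the concatenation of a minimal (Maslov-$0$) path from $\Lambda_0$ to $\Lambda_1$ with a path that runs the Gauss map data in the opposite order, the total crossing number over the closed loop is exactly the dimension $n$; this is the same $\frac{n}{2}+\frac{n}{2}$ bookkeeping that appears in Lemma \ref{lem:dim-deg}'s source \cite[Lemma 5.8]{fooo:anchored} and in \cite[Proposition 8.3]{oh-yso:index}.

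First I would write out, using \eqref{eq:lag-loop-piece} and \eqref{eq:tilde-alpha}, the Lagrangian loop $\widetilde\alpha^\Phi_{[w,\gamma];\alpha_{01}}$ over $\partial[0,1]^2$ (resp.\ the pentagon) whose Maslov index is $\mu([w,\gamma];\alpha_{01})$ by Definition \ref{defn:grading-chord}. Then I would do the same for $w^+(s,t)=w(s,1-t)$ with the reversed bridge $\lambda_{10}(t)=\alpha_{01}(1-t)$ and the time-reversed chord $\widetilde\gamma$, obtaining the loop defining $\mu([\widetilde\gamma,w^+];\alpha_{10})$. The substitution $t\mapsto 1-t$ swaps the two Legendrian boundary components, swaps the bridge endpoints, swaps the roles of the edges $w(\tau,0)$ and $w(\tau,1)$, and reverses orientation of the $t$-edges; concretely the Gauss-map pieces $TR|_{w(\tau,0)}$ and $TR'|_{w(1-\tau,1)}$ of the first loop reappear (reversed) as the corresponding pieces of the second loop. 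I would assemble the two boundary loops along their common pieces and check that all the pieces coming from $\Phi$-images of tangent spaces of $R$, $R'$, and of $d\phi^{T_0t}_{R_\lambda}(T_{\gamma(0)}R)$ cancel in pairs under the matching, leaving only a contribution localized at the corner/pentagon edge carrying the transition path $\alpha^-$.

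Next I would evaluate that leftover contribution. Near the $(1,1)$ corner the relevant data is: a short path from $T_{\gamma(1)}R'$ (equivalently $\Phi$-image thereof) to $d\phi^{T_0}_{R_\lambda}(T_{\gamma(0)}R)$ through $Lag(\xi_p)\setminus Lag_1(\xi_p;T_pR')$ with $-(\alpha^-)'(1)$ positively directed, concatenated from the other square with its mirror; by the transversality hypothesis (Hypothesis \ref{hypo:nondegeneracy}, i.e.\ $\Psi_\gamma(T_{\gamma(0)}R)\pitchfork T_{\gamma(1)}R'$ in $\xi_{\gamma(1)}$) these two subspaces are transverse, and the normalization conditions in the bulleted list preceding \eqref{eq:tilde-alpha} pin down the crossing form sign. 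The standard computation (a rotation from $\Lambda$ to a transverse $\Lambda'$ and back, run in the positive sense, in $(\R^{2n},\omega_0)$) gives crossing number exactly $n$. Since the Maslov index is a homotopy invariant of the closed loop in $\Lambda_{(\xi,d\lambda)}$ independent of the diagonal trivialization $\Phi$ (as already noted after Definition \ref{defn:grading-chord}), summing the two index contributions yields $\mu([\gamma,w];\alpha_{01})+\mu([\widetilde\gamma,w^+];\alpha_{10})=n$.

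The main obstacle I anticipate is purely bookkeeping: getting the orientations and the pentagon-vertex conventions to match up correctly when the two squares are glued, so that every $\Phi$-image piece of a Gauss map cancels against exactly its mirror from the other square with the right sign, and the residual term is genuinely localized at a single corner rather than spread around the loop. Once the cancellation is organized cleanly, the value $n$ is dictated by the local transverse-pair model and the positivity normalization of $\alpha^-$, and there is no further analytic input needed — this is entirely a statement about Lagrangian paths in a symplectic vector space, so all the contact-geometric data ($\lambda$, $J$, the Reeb flow) enters only through the single transversality assumption, which we already have.
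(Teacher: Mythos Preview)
The paper does not actually prove this lemma: it is stated with the parenthetical ``Compare with Lemma 5.8 \cite{fooo:anchored}'' and no argument is given in the text, the result being imported verbatim from the Lagrangian setting of \cite{fooo:anchored} to the present contact/Legendrian one. Your proposal is exactly the standard proof of that source lemma, transported through the diagonal trivialization $\Phi$ of Definition \ref{defn:Phi}: glue the two pentagon-loops \eqref{eq:tilde-alpha} for $(w,\gamma,\alpha_{01})$ and $(w^+,\widetilde\gamma,\alpha_{10})$, cancel the matching Gauss-map and bridge pieces, and read off the residual crossing number $n$ from the two $\alpha^-$ arcs at the chord corner using the transversality of Hypothesis \ref{hypo:nondegeneracy} and the positivity normalization preceding \eqref{eq:tilde-alpha}. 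So your approach is correct and is the same as the one the paper implicitly invokes by citation.

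One small sharpening worth making explicit when you write it up: the sum $\mu([\gamma,w];\alpha_{01})+\mu([\widetilde\gamma,w^+];\alpha_{10})$ is not literally the Maslov index of a concatenated loop until you arrange the two pentagon-loops to share a basepoint and run one of them with reversed orientation; the reflection $t\mapsto 1-t$ reverses the boundary orientation of $[0,1]^2$, so the $w^+$-loop is traversed clockwise relative to the $w$-loop, and it is this orientation reversal that makes the Gauss-map edges cancel rather than double. Once that is said, the leftover is genuinely the concatenation of the $\alpha^-$ arc for $\gamma$ with the $\alpha^-$ arc for $\widetilde\gamma$, which is a loop based at $T_{\gamma(1)}R'$ passing through the transverse subspace $d\phi^{T_0}_{R_\lambda}(T_{\gamma(0)}R)$ with the prescribed positive crossing, and its Maslov number is $n$ as you say.
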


\begin{lem}[Compare with Lemma 5.12 \cite{fooo:anchored}]\label{lem:poly} Let $\CE$ be a graded bridged Legendrian link and
assume that the graded base path $(\ell_{0k},\lambda_{0k})$ is given by \eqref{eq:pi2-polygonal}
for given $(\ell_{i(i+1)},\gamma_{i(i+1)})$ for $i = 0, \ldots, k-1$ and $B \in \pi_2({\CE},\vec \gamma)$. Then
$$
\mu(\CE,\vec \gamma;B) + \sum_{i=0}^{k-1}
 \mu\left([\gamma_{i(i+1)},w_{i(i+1)}];\alpha_{i(i+1)}\right) =
\mu([\gamma_{0k},w_{0k}];\alpha_{0k}).
$$
\end{lem}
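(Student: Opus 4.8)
The plan is to deduce the identity from the additivity of the Maslov index of symplectic bundle pairs under gluing of bordered surfaces along strip-like ends, following verbatim the scheme of the proof of Lemma 5.12 in \cite{fooo:anchored}, with the bounding squares of Definition \ref{defn:symp-bundle-pair} and iso-speed Reeb chords playing the roles of bounding disks and Lagrangian intersection points. The homotopy-invariance and trivialization-independence statements needed have all been recorded already (Definition \ref{defn:grading-chord} and the remark following it, together with \cite{oh-yso:index}), so the content is a careful bookkeeping of boundary Lagrangian data at the corners.

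First I would assemble a glued capping of $\gamma_{0k}$. Fix a representative $u\in\CF((\dot\Sigma,\del\dot\Sigma),\CE;\gamma_0,\overline\gamma)$ of the class $B$, where $\dot\Sigma$ is the disc-type surface with one positive puncture $z_0$ asymptotic to $\gamma_{0k}$ and $k$ negative punctures asymptotic to $\gamma_{01},\dots,\gamma_{(k-1)k}$, and for each $i=0,\dots,k-1$ let $w_{i(i+1)}\colon[0,1]^2\to M$ be the bounding square used to define $\mu([\gamma_{i(i+1)},w_{i(i+1)}];\alpha_{i(i+1)})$, so that $w_{i(i+1)}(0,\cdot)=\ell_{i(i+1)}$, $w_{i(i+1)}(1,\cdot)=\gamma_{i(i+1)}$, with the $R_i$- and $R_{i+1}$-boundary conditions on the two horizontal edges. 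Glue the edge $\{1\}\times[0,1]$ of each $w_{i(i+1)}$ to the corresponding negative strip-like end of $\dot\Sigma$, matching asymptotics. Since $\dot\Sigma$ is of disc type with a single positive puncture, the glued domain is again a disc-type surface carrying exactly one puncture; call the resulting map $w_{0k}'$. Its boundary, traversed from one lip of the remaining puncture to the other, is the concatenation of an $R_0$-arc, the bridge $\ell_{01}$ (the left edge of $w_{01}$), an $R_1$-arc, the bridge $\ell_{12}$, and so on, ending with an $R_k$-arc. By the definition \eqref{eq:pi2-polygonal} of the polygonal concatenation $(\ell_{0k},\lambda_{0k})$ — which expresses it as precisely the homotopy class of this concatenation — the boundary Lagrangian loop attached to $w_{0k}'$ is homotopic to the one defining $\mu([\gamma_{0k},w_{0k}];\alpha_{0k})$; since the relevant bundles are trivial over the contractible base and $\mu$ depends only on the homotopy class of that loop, $w_{0k}'$ computes $\mu([\gamma_{0k},w_{0k}];\alpha_{0k})$.

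Finally, choose a diagonal trivialization of $(w_{0k}')^*TM$ restricting to diagonal trivializations over $u$ and over each $w_{i(i+1)}$. Under the gluing, the Lagrangian loop of Definition \ref{defn:grading-chord} computing $\mu([\gamma_{0k},w_{0k}];\alpha_{0k})$ is exhibited, after inserting and cancelling auxiliary segments at the glued ends, as the concatenation of the loop piece coming from $u$ — which by definition computes $\mu(\CE,\vec\gamma;B)$ — with the loop pieces coming from the $w_{i(i+1)}$ — which compute $\mu([\gamma_{i(i+1)},w_{i(i+1)}];\alpha_{i(i+1)})$. Here, at each old negative puncture, the asymptotic chord path $t\mapsto d\phi^{T_0t}_{R_\lambda}(T_{\gamma(0)}R)$ and the pentagon correction $\alpha^-$ of Remark \ref{rem:pentagon} occur once with each orientation, since both pieces use the same recipe (Proposition 8.3 of \cite{oh-yso:index}), so they contribute nothing; additivity of the Maslov index under concatenation then yields
$$
\mu([\gamma_{0k},w_{0k}];\alpha_{0k}) = \mu(\CE,\vec\gamma;B) + \sum_{i=0}^{k-1}\mu([\gamma_{i(i+1)},w_{i(i+1)}];\alpha_{i(i+1)}),
$$
which is the assertion. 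The delicate point, and the main obstacle, is exactly this matching of the pentagon corrections and Gauss-map segments at the corners so that no stray half-turn of the Lagrangian frame survives; I would make it rigorous by induction on $k$, gluing one square at a time, so that the general identity reduces to the $(k-1)$-gon identity plus a single two-component gluing. That single gluing is the only genuinely new computation: it uses the transversality $d\phi^{T_0}_{R_\lambda}(T_{\gamma(0)}R)\pitchfork T_{\gamma(1)}R'$ from Hypothesis \ref{hypo:nondegeneracy}, which keeps $\alpha^-$ in the open stratum where its Maslov contribution is the expected one, together with the complementary relation of Lemma \ref{lem:dim-deg} to handle any orientation reversal at the output puncture; once it is checked, the general case follows formally.
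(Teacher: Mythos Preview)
Your argument is correct and follows essentially the same strategy as the paper's own proof (which appears later as Lemma~\ref{thm:poly}): both rest on gluing the bounding squares $w_{i(i+1)}$ to the polygon $u$ representing $B$ and invoking additivity/homotopy invariance of the Maslov index of the resulting Lagrangian boundary loop. The only presentational difference is that the paper glues in the time-reversed square $w_{k0}^+$ as well, closing the domain completely, and then reads off the identity from the fact that the concatenated loop
\[
\sum_{i=0}^{k-1} \alpha_{([\gamma_{i(i+1)},w_{i(i+1)}];\lambda_{i(i+1)})}
+ \alpha_{([\gamma_{k0},w_{k0}^+];\lambda_{k0}^+)} +  \widetilde{\alpha}_w
\]
is null-homologous on the glued disc; you instead leave the positive puncture open and identify the glued map directly with a representative of $[\gamma_{0k},w_{0k}]$. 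These are equivalent, and your more explicit bookkeeping of the pentagon corrections $\alpha^-$ at the glued ends (together with the inductive reduction to a single two-component gluing) is a welcome elaboration of what the paper's terse proof leaves implicit.
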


The  condition \eqref{eq:null} implies 
$$
\mu\left(E_{u;[w,\vec \ell]}, \alpha_{u;[w,\vec \ell]}\right) = 0.
$$
The following special case of the index formula immediately follows combining the
above discussions.

\begin{cor}[Corollary 11.5 \cite{oh-yso:spectral}]\label{cor:disc-index}
 Suppose $\Sigma = D^2$ and the condition \eqref{eq:null} with $k+1$ 
boundary marked points.  Then we have
\bea
\Index D\Upsilon(u) & = & n + \mu\left([w_{k(k+1)}^-,\gamma_{k+1}];\alpha^-_{k(k+1)}\right)
 - \sum_{i=1}^{k}\mu\left([w_{(i-1)i}^+,\gamma_i];\alpha^+_{(i-1)i}\right)
\nonumber\\
& = &  \mu \left([w_{0k}^+,\widetilde \gamma_0];\alpha^+_{0k}\right)
 - \sum_{i=0}^{k}\mu \left([w_{(i-1)i}^+,\gamma_i];\alpha^+_{(i-1)i}\right)
\eea
where we take the identification $k+1 \equiv 0 \mod k+1$, and $\widetilde \gamma_j$
and $w^+_{0k}(\tau,t)$ are the 
maps defined by 
$$
\widetilde \gamma_0(t): = \gamma_0(1-t), \quad w^+_{0k}(\tau,t) := w_{k(k+1)}^-(-\tau,1-t).
$$
\end{cor}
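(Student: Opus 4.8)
The plan is to derive Corollary~\ref{cor:disc-index} purely formally from Theorem~\ref{thm:index-formula} together with the two combinatorial lemmata (Lemmata~\ref{lem:dim-deg} and~\ref{lem:poly}) and the degree-vanishing observation $\mu(E_{u;[w,\vec\ell]},\alpha_{u;[w,\vec\ell]}) = 0$ that follows from the configuration~\eqref{eq:null}. First I would specialize Theorem~\ref{thm:index-formula} to $\Sigma = D^2$, so $g = 0$ and $h = 1$, whence $n(2-2g-h) = n$. Under~\eqref{eq:null} there is exactly one negative puncture $z_0$ (which we have declared positive in the sign convention of this section's enumeration — care with the $\pm$ bookkeeping is needed here) and $k$ punctures $z_1,\dots,z_k$ of the opposite type. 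Plugging these counts into the index formula, and using $\mu(E_{u},\alpha_{u}) = 0$, collapses the general formula to
$$
\Index D\Upsilon(u) = n + \mu\!\left([w^-_{k(k+1)},\gamma_{k+1}];\alpha^-_{k(k+1)}\right) - \sum_{i=1}^{k}\mu\!\left([w^+_{(i-1)i},\gamma_i];\alpha^+_{(i-1)i}\right),
$$
which is the first displayed equality. The indexing shift $k+1 \equiv 0 \bmod (k+1)$ and the relabeling of the lone negative-type chord as $\gamma_{k+1} = \gamma_0$ (its role being that of the "output" chord of the $A_\infty$ operation) is just a notational normalization carried over from \cite{fooo:book1}.

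For the second equality I would invoke Lemma~\ref{lem:dim-deg} with $\gamma = \gamma_0$, $w = w_{0k}$, $\alpha_{01} = \alpha_{0k}$: the identity $\mu([\gamma_0,w_{0k}];\alpha_{0k}) + \mu([\widetilde\gamma_0,w^+_{0k}];\alpha_{0k}^{+}) = n$ lets me trade the $n$ plus the term attached to the negative puncture for $\mu([w^+_{0k},\widetilde\gamma_0];\alpha^+_{0k})$, after matching the reparametrized data $w^+_{0k}(\tau,t) = w^-_{k(k+1)}(-\tau,1-t)$ and $\widetilde\gamma_0(t) = \gamma_0(1-t)$ exactly as in the statement of the corollary. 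Concretely, $n + \mu([w^-_{k(k+1)},\gamma_{k+1}];\alpha^-_{k(k+1)}) = \mu([w^+_{0k},\widetilde\gamma_0];\alpha^+_{0k})$, and substituting this into the first equality — then reindexing the sum to run over $i = 0,\dots,k$ by absorbing the now-converted term as the $i=0$ summand — produces the second displayed line. (Lemma~\ref{lem:poly} is the structural statement that makes the polygonal concatenation of bridges consistent, so that the graded base path $\ell_{0k}$ appearing implicitly in $\widetilde\alpha$ is the one determined by the $\ell_{i(i+1)}$'s; I would cite it to justify that the $\alpha^+$'s on both sides refer to the same gradings.)

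The step I expect to be the genuine obstacle is the sign-and-orientation bookkeeping: confirming that the negative puncture $z_0$ in the book's convention \cite{fooo:book1,fooo:book2} contributes with a $+$ sign in Theorem~\ref{thm:index-formula}'s sum $+\sum_{j,\beta}\mu(\dots)$ while the $k$ inputs contribute with the $-\sum_{i,\alpha}\mu(\dots)$, and that the time-reversal involution $\tau$ from~\eqref{eq:tau} together with the relation $\mu([\gamma,w];\alpha) + \mu([\widetilde\gamma,w^+];\alpha') = n$ of Lemma~\ref{lem:dim-deg} converts the output contribution correctly. This is where an off-by-one in the puncture count, or a sign slip in identifying $\widetilde\gamma_0$ with the time-reversal of the output chord, would corrupt the final formula; everything else is mechanical substitution. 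I would therefore write the proof by first fixing, once and for all, which puncture is "positive" in the sense of this section versus in the sense of the index theorem, verify the base case $k=1$ by hand against the known Legendrian CI cohomology differential of \cite{oh-yso:spectral}, and only then state the general reduction.
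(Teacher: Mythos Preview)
Your proposal is correct and follows essentially the same route as the paper. The paper does not give a standalone proof of the corollary at all: it simply states that ``the following special case of the index formula immediately follows combining the above discussions,'' where ``the above discussions'' are exactly the three ingredients you invoke---Theorem~\ref{thm:index-formula} specialized to $(g,h)=(0,1)$, the vanishing $\mu(E_{u;[w,\vec\ell]},\alpha_{u;[w,\vec\ell]})=0$ forced by~\eqref{eq:null}, and Lemmata~\ref{lem:dim-deg} and~\ref{lem:poly}. Your identification of the sign/puncture bookkeeping as the only nontrivial point, and your suggestion to anchor it by checking $k=1$ against the known differential, is a reasonable elaboration of what the paper leaves implicit.
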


Using these results, we prove the following relationship between the shifted degrees.

\begin{prop}[Lemma 5.2.24 \cite{fooo:anchored}]\label{dim-deg}
Whenever $\dim \CM(\CE, \vec \gamma;B) = 0$, we have
\be\label{misdeg1}
(\mu([\gamma_{0k},w_{0k}];\alpha_{0k}) - 1)
= 1 + \sum_{i=1}^k (\mu([\gamma_{i(i+1)},w_{i(i+1)}];\alpha_{i(i+1)}) - 1).
\ee
\end{prop}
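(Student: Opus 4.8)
The plan is to reduce the statement to a purely arithmetic identity among Maslov indices, using the index formula and the dimension formula that have already been assembled in the excerpt. First I would invoke Corollary \ref{cor:disc-index}, which expresses $\Index D\Upsilon(u)$ for the disc with the marking convention \eqref{eq:null} as
$$
\Index D\Upsilon(u) = \mu\!\left([w_{0k}^+,\widetilde\gamma_0];\alpha^+_{0k}\right) - \sum_{i=0}^{k}\mu\!\left([w_{(i-1)i}^+,\gamma_i];\alpha^+_{(i-1)i}\right).
$$
Here the $i=0$ term on the right-hand side is $\mu([w_{0}^+,\gamma_0];\alpha^+_{0})$, i.e.\ the term associated to the distinguished (positive, then converted to output) puncture; I would pair it with the leading term via Lemma \ref{lem:dim-deg}, which gives $\mu([\gamma_0,w];\alpha_{01}) + \mu([\widetilde\gamma_0,w^+];\alpha_{10}) = n$. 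Substituting this relation eliminates the awkward asymmetry between $\widetilde\gamma_0$ and $\gamma_0$ and turns the index into an expression purely in terms of the $\mu([\gamma_{i(i+1)},w_{i(i+1)}];\alpha_{i(i+1)})$ for $i=0,\dots,k$, up to the additive constant $n$.

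Next I would impose the hypothesis $\dim\CM(\CE,\vec\gamma;B)=0$. By the dimension formula recalled after Theorem \ref{thm:index-formula}, for $\Sigma = D^2$ (the case $(g,h)=(0,1)$) one has
$$
\dim\CM(\dot\Sigma,\CE;\vec\gamma;B) = \Index D\Upsilon(u) + \dim\CM_{(0;1),m} = \Index D\Upsilon(u) + (m-3),
$$
and with $m = k+1$ this reads $\dim\CM = \Index D\Upsilon(u) + (k-2)$. Setting this to zero gives $\Index D\Upsilon(u) = 2-k$. I would then equate this with the expression for $\Index D\Upsilon(u)$ obtained in the first step, collect the $n$'s (they should cancel once Lemma \ref{lem:dim-deg} has been used), and rearrange. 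The target identity \eqref{misdeg1} is, after moving everything to one side,
$$
\mu([\gamma_{0k},w_{0k}];\alpha_{0k}) - \sum_{i=1}^{k}\mu([\gamma_{i(i+1)},w_{i(i+1)}];\alpha_{i(i+1)}) = k+1-k = \cdots
$$
so it should fall out of bookkeeping once the index is known; the factor $-1$ shifts on each side simply redistribute the constant $k$ against the $k$ summands plus the extra $+1$. Lemma \ref{lem:poly} may be needed here as well, to rewrite $\mu([\gamma_{0k},w_{0k}];\alpha_{0k})$ in terms of the polygonal data if the $\alpha_{0k}$ appearing in Corollary \ref{cor:disc-index} is the concatenated one rather than the abstractly chosen graded bridge; in that case Lemma \ref{lem:poly} is precisely the translation dictionary between the two.

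The main obstacle I anticipate is not the arithmetic but the \emph{bookkeeping of conventions}: keeping straight which puncture is positive versus negative under the convention \eqref{eq:null}, which chords have been time-reversed (the $\widetilde{\,\cdot\,}$ operation and the involution $\tau$ of \eqref{eq:tau}), and whether a given $\mu$ refers to the square bundle pair of Definition \ref{defn:symp-bundle-pair} or to its reflected partner $w^+$. Each sign error of this type shifts the constant term, so the delicate part is to verify that, after applying Lemma \ref{lem:dim-deg} once (to handle the $0$-th term) and possibly Lemma \ref{lem:poly} (to handle the $(0k)$ bridge), all occurrences of $n$ cancel and exactly the combinatorial constant needed to produce the ``$1 + \sum(\cdot - 1)$'' shape on the right-hand side of \eqref{misdeg1} survives. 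I would therefore carry out the substitution slowly, tracking the $i=0$ term separately from $i=1,\dots,k$, and only at the very end rewrite $k$ of the constant as $\sum_{i=1}^{k}(-1)\cdot(-1)$ absorbed into the shifted degrees, leaving the residual $+1$.
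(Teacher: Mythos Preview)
Your proposal is correct and follows essentially the same route as the paper. The paper's proof invokes exactly the three ingredients you identify---Corollary \ref{cor:disc-index}, Lemma \ref{lem:poly}, and Lemma \ref{lem:dim-deg}---together with the hypothesis $\dim\CM(\CE,\vec\gamma;B)=0$ (equivalently $\Index D\Upsilon(u)+k-2=0$), and then says ``substituting and rearranging'' yields \eqref{misdeg1}; your write-up is in fact more explicit than the paper's about where the dimension formula enters and why the constants fall out.
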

\begin{proof}
Lemma \ref{lem:poly} and Corollary \ref{cor:disc-index} imply
$$
\sum_{i=0}^{k}
\mu([\gamma_{i(i+1)},w_{i(i+1)}];\alpha_{i(i+1)})
= n + k -2
$$
in the case $\dim \CM(\CE, \vec \gamma;B) = 0$.
By Lemma \ref{lem:dim-deg}, we have
$$
\mu([\gamma_{0k},w_{0k}];\alpha_{0k})=
- \mu([\gamma_{k0},w_{k0}];\alpha_{k0}) + n.
$$
Substituting this into the above identity and rearranging the identity, we obtain
the proposition.
\end{proof}

\section{Definition of chain modules}

Let $(R,R')$ be a pair of connected Legendrian submanifolds with graded bridge $(\ell;\alpha)$. 
Assume the nondegeneracy spelled out in Hypothesis \ref{hypo:nondegeneracy}. 
For each such a pair, we will associate a $\K[q,q^{-1}]$ module over $\mathfrak{X}(R,R')$ denoted by
$$
CI_\lambda^*(R,R')
$$
by considering two cases, $R \cap R' = \emptyset$ and $R = R'$ (or more generally the case of 
clean intersections $R \cap R'$) separately.
We recall that for a generic choice of pairs $(R,R')$, we have
$$
R \cap R' = \emptyset
$$
by the dimensional reason.
(Here $CI_\lambda$ stands for \emph{contact instanton for $\lambda$} as well as the letter $C$ also
stands for \emph{complex} at the same time.)

\subsection{The case $R \cap R' = \emptyset$}

In this case, we have one-to-one correspondence between $\mathfrak{X}(R,R';\lambda)$
and the set of Reeb chords joining $R, \, R'$: the correspondence is given by
$$
(\gamma,T) \Leftrightarrow (\gamma_T, \text{\rm sign} T)
$$
where $\gamma_T: [0,|T|] \to \R$ is the Reeb chord given by $\gamma_T(t): = \gamma(t/T)$.

\begin{defn} Let $(R,R';\ell)$ be a bridged 2-components link. Let $(\gamma,T) \in \mathfrak{X}(M;R,R')$
and $w:[0,1]^2 \to M$ be a bounding square satisfying
$$
w(0,t) = \ell(t),\, w(1,t) = \gamma(t), \, w(s,0) \in R, \, w(s,1) \in R'.
$$
We say $(w,\gamma) \sim_\ell (w',\gamma')$ if 
\begin{enumerate}
\item $\gamma = \gamma'$,
\item $[\gamma,w;\alpha] = [\gamma',w';\alpha]$ in $\pi_2(R,R';\ell;\gamma)$.
\end{enumerate}
We denote by $[w,\gamma] = [w,\gamma]_\alpha$ the equivalence class of $\sim_\ell$.
\end{defn} 

For any two pairs $(w,\gamma)$ and $(w',\gamma')$ bounding the same 
bridge $\ell$ and a Reeb chord $\gamma$, we can define the concatenation
$\overline w * w'$ defined by
$$
w * \overline w'(s,t) = \begin{cases} w(2s,t) \quad & 0 \leq s \leq \frac12\\
w'(2(\frac12 - s),t) \quad & 1/2 \leq s \leq 1
\end{cases}
$$
defines an annulus map $v: S^1 \times [0,1] \to M$ as in Remark \ref{rem:GammaRR'} and hence
an element of $\pi_1(\Omega(R,R'))$, which we denote by $A$. Then we  write
$$
[w,\gamma] = [w',\gamma'] \cdot A.
$$
It is easy to see that this defines a right action of $\pi_1(\Omega(R,R');\ell)$.
A priori this group may not be abelian. Recall the natural Maslov homomorphism
$$
\mu: \pi_1(\Omega(R,R');\ell) \to \Z; \quad A \mapsto \mu(A).
$$
\begin{defn}[Maslov period group]
We define the subgroup 
$$
\Sigma_{RR'}: = \Image \mu \subset \Z
$$
call the \emph{Maslov period group} of the pair $(R,R')$.
\end{defn}
It is easy to see that the subgroup does not depend on the choice of the base path $\ell$.

Now we equip a grading to the above bridge and consider a graded bridged 2-component link $(R,R')$
and consider the set $\mathfrak{X}(R,R')$ of iso-speed Reeb chords $(\gamma,T)$.

\begin{defn} Let $(R,R';\alpha)$ be a graded bridged 2-components link. Let $(\gamma,T) \in \mathfrak{X}(M;R,R')$
and $w:[0,1]^2 \to M$ be a bounding square satisfying
$$
w(0,t) = \ell(t),\, w(1,t) = \gamma(t), \, w(s,0) \in R, \, w(s,1) \in R'.
$$
We say $(w,\gamma) \sim_\alpha (w',\gamma')$ if 
\begin{enumerate}
\item $\gamma = \gamma'$,
\item $\mu(\gamma,w;\alpha) = \mu(\gamma',w';\alpha)$.
\end{enumerate}
We denote by $[w,\gamma] = [w,\gamma]_\alpha$ the equivalence class of $\sim_\alpha$.
We define
$$
\Sigma_{RR'}^\alpha: = \{k \in \Z \mid k = \mu([w,\gamma];\alpha), (w,\gamma) \in \widetilde{\mathfrak{X}}(R,R';\alpha)\}
$$
call it the \emph{Maslov period set} of the graded bridged link $(R,R';\alpha)$.
\end{defn} 
Unlike $\Sigma_{RR'}$, $\Sigma_{RR'}^\alpha$ depends on the choice of $\alpha$.

For any two pairs $(w,\gamma)$ and $(w',\gamma')$ bounding the same graded
bridge $\ell$ and a Reeb chord $\gamma$, we can define the concatenation
$\overline w * w'$ defined by
$$
w * \overline w'(s,t) = \begin{cases} w(2s,t) \quad & 0 \leq s \leq \frac12\\
w'(2(\frac12 - s),t) \quad & 1/2 \leq s \leq 1
\end{cases}
$$
defines an annulus map $v: S^1 \times [0,1] \to M$ as in Remark \ref{rem:GammaRR'} and hence
an element of $\pi_1(\Omega(R,R'))$, which we denote by $A$. Then we write
$$
[w,\gamma] = [w',\gamma'] \cdot A.
$$
It is easy to see that this defines a right action of 
$$
\Gamma_{RR'} \cong \frac{\pi_1(\Omega(R,R'))}{\ker (\mu: A \mapsto \mu(A))}
$$
on  $\widetilde{\mathfrak{X}}(R,R';\alpha)$. 

Therefore we have the commutative diagram 
$$
\xymatrix{\widetilde{\mathfrak{X}}(R,R';\alpha) \ar[r]^{\hookrightarrow} \ar[d]_\pi 
& \widetilde \Omega(R,R')\ar[d]_\pi\\
{\mathfrak{X}}(R,R';\alpha) \ar[r]^{\hookrightarrow} & \Omega(R,R')}
$$
whose deck transformation group is given by $\Gamma_{RR'}$. We consider the
group ring
$$
\K[\Gamma_{RR'}]: = \left\{ \sum_{i=1} k_i q^{\mu(A_i)} \mid A_i \in \Gamma_{RR'}, \, k_i \in \K \right\}.
$$
By definition, this is a subring of $\K[q,q^{-1}]$.

\begin{rem} We would like to mention that the relationship between $\K[\Gamma_{RR'}]$ and
$\K[q,q^{-1}]$ is analogous between the universal Novikov ring $\Lambda_{\text{\rm nov}}$ and
the Novikov ring $\Lambda(L,L';\ell_0)$ associated to the based Lagrangian pair $(L,L')$
in symplectic geometry with the formal parameter $T$ encoding the action dropped. The latter is 
because the action in the current contact case is always single-valued.
\end{rem}

These being said, we define a $\K[q,q^{-1}]$ module
$C(R,R';\K[q,q^{-1}])$ and construct a filtered
$A_{\infty}$ bimodule structure on it. Let
\index{Floer cochain module}
\be\label{eq:CIRR'ell}
CI(R,R';\ell) = \underset{[w,\gamma] \in \widetilde{\mathfrak{X}}(R,R';\ell) }
\bigoplus \K [w,\gamma],
\ee
$$
\widehat{CI}(R,R';\ell;\K[q,q^{-1}])
:= CI(R,R';\ell)\,\,\widehat\otimes_{\Bbb R}\,\,\Lambda_{\text{\rm nov}}.
$$
and an equivalence relation $\sim$ on
$\widehat{CI}(R,R';\ell;\K[q,q^{-1}])$ as follows:
Say
$$
e^{\mu}[w,\gamma]
\sim  e^{\mu^{\prime}}[w',\gamma']
$$
for $e^{\mu}[w,\gamma]$, $ e^{\mu^{\prime}}[w',\gamma']
\in \widehat{CI}(R,R';\ell;\K[q,q^{-1}])$
if and only if the following conditions are satisfied:
\bea\label{eq:Gamma-equivalence-module}
\gamma & = & \gamma^{\prime} \nonumber\\
2\mu + \mu ([w,\gamma]) & = &
2\mu^{\prime} + \mu ([w',\gamma']).
\eea

We define $CI(R,R';\ell;\K[q,q^{-1}])$ to be the quotient
\be\label{eq:CIR0R1}
\index{$CI(R,R';\ell;\K[q,q^{-1}])$}
{\widehat{CI}(R,R';\ell;\K[q,q^{-1}])}/{\sim}
= : CI(R,R';\ell;\K[q,q^{-1}]).
\ee
The lemma enables us to define the following filtration on
$CI(R,R';\ell;\K[q,q^{-1}])$.

\index{energy filtration!for Floer complex}
\begin{defn}[Energy filtration for contact instanton complex] \label{defn:action-filtration}
We define the {\it energy filtration} on $\widehat{CI}(R,R';\ell)$ by setting
$$
F^{c}C(R,R';\ell): = \left\{[w,\gamma] \in \widehat{CI}(R,R';\ell) \, \Big| \,
\int \gamma^*\lambda \ge c \right\}
$$
for each $c \in \R$.
\end{defn}
This filtration obviously descends to one
on $\widehat{CI}(R,R';\ell; \K[q,q^{-1}])/\sim$. 

Now, we define
$$
C(R,R';\ell ; \K[q,q^{-1}])
$$
 as follows. Namely
\be\label{eq:CRR'ell}
C(R,R' ; \ell; \K[q,q^{-1}]) \cong
\underset{\gamma \in \mathfrak{X}(R,R')}
\bigoplus
\K[q,q^{-1}] \langle \gamma \rangle
\ee
as a $\K[q,q^{-1}]$ module.

We now explain the statement about the gapped condition.

\begin{defn}[ $(\Gamma,\Gamma')$-set $\Gamma_{RR'}$]\label{defn:GG'-set}
Consider the subset
$$
\Gamma_{RR'}= \{ \beta+ \alpha + \beta'  \in
\mid
\beta^{(i)} \in \Gamma_{R}, \, \alpha \in \Gamma_{RR',0}\}
$$
where $\Gamma_{RR',0} \subset  2\Z)$ is defined by
\beastar
\Gamma_{RR',0} &: = &\{ \mu (\gamma,\gamma';B)) \mid \gamma, \,\gamma
\in \mathfrak{Reeb}(R,R'), \\
B \in \pi_2(R,R;'\gamma,\gamma'), \CM(\gamma,\gamma';B) \ne \emptyset \}.
\eeastar
\end{defn}

\subsection{Clean intersection case}

We start with the maximally clean intersection case $R' = R$.
Then we have the decomposition
$$
\mathfrak{X}(R,R) = \mathfrak{X}_{\{T=0\}}(R,R)  \sqcup \mathfrak{X}_{\{T > 0\}}(R,R) \subset \Theta(R;M)
 $$
and $\mathfrak{X}_{\{T=0\}}(R,R)$ is in one-to-one correspondence by the evaluation map
$$
(\gamma,0) \mapsto \gamma(0).
$$
The following definition is the open string analogue of \cite[Definition 1.7]{bourgeois} and \cite[Definition 1.2]{oh-wang:CR-map2}.

\begin{defn}\label{defn:morse-bott}
Let $\lambda$ be a contact form and consider $\mathfrak{X}(R,R)$. We say a
connected component of $\mathfrak{X}(R,R)$ is a Morse-Bott component
in $\Theta(R;M)$ if the following holds:
\begin{enumerate}
\item The tangent space at
every pair $(T,z) \in \mathfrak{X}(R,R)$ therein coincides with $\ker d_{(\gamma,T)}\Phi_\lambda$.
\item The locus $Q \subset M$ of Reeb chords is embedded.
\item The 2-form $d\lambda|_Q$ associated to the locus $Q$ of Reeb chords has constant rank.
\end{enumerate}
\end{defn}

For the simplicity of notation, we denote
$$
\widetilde R: = \mathfrak{X}_{T=0}(R,R)
\cong \{ \gamma:[0,1] \to M \mid \gamma(t) \equiv q \in R\}.
$$
The following general proposition is proved in \cite[Proposition 10.7]{oh:contacton-gluing}.

\begin{prop}\label{prop:Bott-clean} Let $(M,\xi)$ be a contact manifold, and
let $R$ be a compact Legendrian submanifold.
Consider Moore chords $(\gamma, T)$ of $\widetilde R$ i.e., pairs
$\gamma:[0,1] \to M$ with
\be\label{eq:Moore-path-action}
\gamma(0),\, \gamma (1) \in R, \quad \int \gamma^*\lambda = T \geq 0.
\ee
Then for any choice of contact form $\lambda$ of $(M,\xi)$,
the set $\widetilde R$ is a Morse-Bott component of $\mathfrak{X}(R,R)$ in $\Theta(R;M)$.
\end{prop}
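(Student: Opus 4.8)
The plan is to verify directly the three defining properties of a Morse--Bott component from Definition \ref{defn:morse-bott} for the locus $\widetilde R = \mathfrak{X}_{T=0}(R,R) \cong R$ of constant chords; properties (2) and (3) are essentially formal, and the real content is (1).

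For (2) and (3): the locus $Q \subset M$ of Reeb chords swept out by $\widetilde R$ is $\bigcup_{q \in R}\{q\} = R$ itself, which is embedded since $R$ is a compact Legendrian submanifold; this is (2). Since $R$ is Legendrian, $T_qR \subset \xi_q$ and $R$ is $d\lambda$-isotropic, so $d\lambda|_Q = d\lambda|_R \equiv 0$, which has constant rank $0$; this is (3).

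For (1) I would compute the kernel of the linearization $d_{(\gamma,0)}\Phi_\lambda$ at a constant chord $\gamma \equiv q$, $q \in R$, $T = 0$, and identify it with $T_{(0,\gamma)}\widetilde R$. Trivializing $\gamma^*TM \cong [0,1]\times T_qM$ by the constant frame and using a torsion-free connection, the general linearization $d\Phi_\lambda(Y,s) = \nabla_t Y - T\nabla_Y R_\lambda - s\,R_\lambda(\gamma)$ collapses at $(T,\gamma) = (0,\gamma)$ to $d_{(\gamma,0)}\Phi_\lambda(Y,s) = \dot Y - s\,R_\lambda(q)$, for $Y\colon[0,1]\to T_qM$ with $Y(0),Y(1)\in T_qR$ and $s\in\R$. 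Setting this to zero forces $\dot Y \equiv s\,R_\lambda(q)$, hence $Y(1)-Y(0) = s\,R_\lambda(q)$; the left-hand side lies in $T_qR \subset \xi_q = \ker\lambda_q$ whereas $\lambda(R_\lambda) = 1 \ne 0$, so $s = 0$ and $Y \equiv Y(0) \in T_qR$. Therefore $\ker d_{(\gamma,0)}\Phi_\lambda = \{(v,0) : v \in T_qR\}$, which under the identification $\widetilde R \cong R$, $q \mapsto (\text{constant chord at }q,\,0)$, is exactly $T_{(0,\gamma)}\widetilde R$. I would also note that the linearized Moore constraint $\int\gamma^*\lambda = T$ at $(0,\gamma)$, namely $\lambda(Y(1)) - \lambda(Y(0)) = s$, is automatically satisfied on this kernel (both sides vanish), so working inside $\Theta(R;M)$ cuts out nothing further; this proves (1).

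The only delicate point is the functional-analytic bookkeeping: one must make sure the tangent space to $\mathfrak{X}(R,R)$ and the kernel $\ker d_{(\gamma,0)}\Phi_\lambda$ are compared in matching completions, and that the $T \to 0$ degeneration of the linearized operator is handled --- its zeroth-order term $-T\nabla_\bullet R_\lambda$ being exactly what drops out at $T=0$. Everything else is elementary: linear ODE theory plus the transversality $R_\lambda \pitchfork \xi$, which requires no hypothesis on $\lambda$ beyond it being a contact form --- hence the statement holds for any choice of contact form. This is the content of \cite[Proposition 10.7]{oh:contacton-gluing}.
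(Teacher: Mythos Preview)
Your proposal is correct. The paper itself does not give a proof of this proposition but simply cites \cite[Proposition 10.7]{oh:contacton-gluing}; your direct verification of the three Morse--Bott conditions is exactly the expected argument, and the key computation --- that at a constant chord the linearized operator reduces to $\dot Y - s\,R_\lambda(q)$, forcing $s=0$ and $Y$ constant in $T_qR$ via $T_qR \subset \ker\lambda_q$ --- is clean and complete.
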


We apply this Morse-Bott property and define the Morse-Bott chain group to be
\be\label{eq:chaingroup-0}
CI_\lambda^*(R;M): = C^*(R) \oplus \K[q,q^{-1}] \langle\mathfrak{X}_{T\neq 0}(R;M)\rangle
\ee
where we take a suitable cochain complex $C^*(R)$ of $R$, e.g., the de Rham complex of
$R$ or the Morse complex of a Morse function $f$ on $R$. For the simplicity, we may take
$$
C^*(R) = CM^*(f;R)
$$
here for a $C^2$-small Morse function $f$, in particular with
$$
\|f\|_{C^0} < T_\lambda(M;R).
$$
Here the group $CM^k(M;R)$ is the free abelian group generated by
the set $\text{Crit}(f)$ of critical points of $f$ of Morse index $k$.
(See \cite{oh:entanglement1}, \cite{oh:contacton-gluing}, \cite{oh-yso:spectral} for this choice to calculate
the contribution of `short Reeb chords' in homology and its applications.)

\section{Legendrian contact instanton curved $A_\infty$ algebra}

In this section, we will first give the construction of an $A_\infty$-algebra associated to 
each graded bridged Legendrian link
\be\label{eq:E-alpha}
\CE = (\vec R, \vec \alpha); \, \vec \alpha = (\alpha_{ab})_{a,b \in \underline k}, \quad 
\alpha_{ab} \in \Gamma\left(\ell_{ab}^*\Lag(\xi)\right)
\ee
with $\vec R = (R_0,\cdots, R_k)$. We will
dualize it into the DGA that we aim to associate to the Legendrian link $\vec R$ in
the next section.

For this purpose, we 
consider the moduli space with $\dim \CM(Y,\vec R;\underline \gamma,\overline \gamma; B) = 0$.
For the clarity of enumeration of the asymptotic chords $\gamma$'s,
we change the enumeration of $\vec \gamma = \underline \gamma \cup \overline \gamma$ into
$$
\underline \gamma = \{\gamma_{01}\}, \quad \overline \gamma = \{\gamma_{12}, \cdots, \gamma_{(k-1)k}\}.
$$

\begin{defn}[Legendrian contact instanton $A_\infty$-algebra] 
The \emph{Legendrian contact instanton $A_\infty$-algebra}
is the one whose structure maps $\{\mathfrak m_k\}_{k \geq 0}$ are defined by the formula
\beastar
&{}& \mathfrak m_k([\gamma_{01},w_{01}],[\gamma_{12},w_{12}],\cdots, [\gamma_{(k-1)k},w_{(k-1)k}]) \,
\\
& = & \sum_B \mathfrak m_{k,B}([\gamma_{01},w_{01}],[\gamma_{12},w_{12}],\cdots, [\gamma_{(k-1)k},w_{(k-1)k}])
\,  [\gamma_{0k},w_{0k}] q^{\mu(B)}\\
& = & \sum_B \sum \#\left(\CM_{k+1}(R_1, \cdots R_k;[\vec \gamma,\vec w];B)\right) \,\ [\gamma_{0k},w_{0k}] q^{\mu(B)}.
\eeastar
Here the sum is over the basis $[\gamma_{0k},w_{0k}]$ of
$CI(R_k,R_0)$, where $B \in \pi_2(\CE;\vec \gamma)$ and $
\vec \gamma = (\gamma_{0k}; \gamma_{01},\gamma_{12}, \cdots, \gamma_{(k-1)k})$ 
such that
$\dim \CM_{k+1}(R_1, \cdots R_k;\vec \gamma;B) = 0$, i.e.,
\be\label{eq:dim=0}
0 = \mu \left([w_{0k},\gamma_{01}];\alpha_{0k}\right)
 - \sum_{i=0}^{k}\mu \left([w_{(i-1)i},\gamma_{(i-1)i}];\alpha_{(i-1)i}\right).
\ee
\end{defn}
Here the operator $\mathfrak m_{k,B}$ is given by the formula
\bea\label{eq:mkB}
&{}& \mathfrak m_{k,B}([\gamma_{01},w_{01}],[\gamma_{12},w_{12}],\cdots, [\gamma_{(k-1)k},w_{(k-1)k}]) 
\nonumber\\
& = & \sum \#\left(\CM_{k+1}(R_1, \cdots R_k;\vec \gamma;B)\right) \, [\gamma_{0k},w_{0k}]
\eea
when  $[\gamma_{0k},w_{0k}] =  (\sum_{i=0}^{k-1} w_{i(i+1)}) \# B$. 
Then we define the map $\mathfrak m_k$ as the sum 
$$
\mathfrak m_k = \sum_{B \in \pi_2(\vec R^0, \cdots, \vec R^k,{\bf p})} m_{k,B} \, q^{\mu(B)}.
$$
We consider their unique coderivations $\widehat{\mathfrak m_k}$
extension thereof of the bar complex $B\CA(Y,\vec R)$ and consider the dual operators
$$
\widehat{\mathfrak m} = \sum_{k=0} \widehat{\mathfrak m}_k.
$$
The following proposition shows that $\del$ is indeed a differential.

\begin{prop}\label{prop:m2=0} The map \eqref{eq:mkB} for $k=1$ satisfies $\widehat{\mathfrak m}^2 = 0$.
\end{prop}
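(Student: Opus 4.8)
The plan is to deduce $\widehat{\mathfrak m}^2 = 0$, equivalently $\mathfrak m_1\circ \mathfrak m_1 = 0$ together with the curved $A_\infty$ relations contributing to the coderivation square, from the standard description of the boundary of the one-dimensional moduli spaces $\CM_{k+1}(R_1,\dots,R_k;\vec\gamma;B)$ when the virtual dimension equals one. Concretely, I would fix a Reeb chord pair $(\gamma_{01},w_{01})$ and $(\gamma_{0k},w_{0k})$ (for the $k=1$ case, a pair of chords with grading difference $2$ after the shift), consider the moduli space $\CM$ of contact instantons on the disc with these asymptotics in a class $B$ with $\dim\CM(\CE,\vec\gamma;B) = 1$, and show that its Gromov--Floer--Hofer compactification $\overline{\CM}$ is a compact one-manifold with boundary, whose boundary points are exactly the two-level broken configurations. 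Counting boundary points mod the relevant coefficients then yields the quadratic relation.

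The key steps, in order: (1) Invoke the transversality results of \cite{oh:contacton-transversality} to ensure that for generic $J$ the moduli spaces of virtual dimension $\le 1$ are smooth manifolds of the expected dimension, cut out transversally by $\Upsilon$, with the evaluation maps also generic. (2) Invoke the a priori energy bounds from Section~\ref{sec:offshellenergy} — in particular Lemma~\ref{lem:|dw|<infty}, Proposition~\ref{prop:proper-energy}, and the energy identity expressing $E(w)$ in terms of the asymptotic chord periods — to bound the energy of all configurations in a fixed class $B$, which is what makes compactness available. (3) Invoke the Gromov--Floer--Hofer style convergence from \cite{oh:entanglement1}, \cite{oh:contacton-Legendrian-bdy} to describe $\overline{\CM}\setminus\CM$: since the domain is a disc with boundary punctures and we are in the $g=0$, one-negative-puncture normalization \eqref{eq:null}, the only degenerations are boundary strip-breakings into a configuration consisting of a lower disc and an index-one strip $\CM_1$ glued at an intermediate Reeb chord; here the tameness hypotheses of Section~\ref{sec:tameness} and the strong maximum principle rule out escape of the image to infinity and rule out disc or sphere bubbling off the interior or the Legendrian boundary at this energy level. (4) Invoke the gluing theorem of \cite{oh:contacton-gluing} to show that each such broken configuration is the limit of exactly one end of $\CM$, so that the broken configurations are in bijection with $\partial\overline{\CM}$. (5) Use the index additivity (Theorem~\ref{thm:index-formula}, Corollary~\ref{cor:disc-index}, Lemma~\ref{lem:poly}) and the degree-shift bookkeeping of Proposition~\ref{dim-deg} to check that the gradings of the broken pieces are precisely those appearing in the two-fold composition $\widehat{\mathfrak m}\circ\widehat{\mathfrak m}$, and that the homotopy classes glue correctly under the concatenation $(\sum w_{i(i+1)})\#B$. (6) Conclude that the signed (or, over a field of characteristic two, unsigned) count of $\partial\overline{\CM}$ vanishes, which is exactly the coefficient of the relevant generator in $\widehat{\mathfrak m}^2$; summing over all $B$ and all asymptotics gives $\widehat{\mathfrak m}^2 = 0$.

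The main obstacle I expect is step~(3): controlling the compactification under the \emph{noncompact} Legendrian boundary condition. Unlike the closed or compact-boundary case, one must rule out the boundary of the disc running off to infinity along the Legendrian, and this is precisely why the notion of \emph{tame Legendrian submanifold} was introduced in Section~\ref{sec:tameness}; the subharmonicity of $\varphi\circ w$ together with the boundary strong maximum principle (the Theorem proved in that section) confines the image to a compact set depending only on the asymptotic data and the class $B$. A secondary subtlety, which in this paper is largely deferred, is the absence of interior bubbling: since here $H=0$ and we work below the period gap $T_\lambda(M;R)$ for the ``short chord'' contributions, sphere and disc bubbles carry positive $d\lambda$-area and hence are excluded on dimensional and energy grounds at the relevant index level, but a careful statement of this exclusion — and the sign verification in step~(5) using the orientation conventions of \cite{EES:orientation} — is where the real work lies.
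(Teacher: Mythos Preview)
Your approach is essentially the one the paper has in mind: the paper's own proof simply cites the compactification results of \cite{oh:entanglement1} and asserts that the argument of \cite{fooo:book1} carries over verbatim to give the curved $A_\infty$ relation. Your outline (transversality, energy bounds, compactness, gluing, index additivity, boundary counting) is exactly that standard argument spelled out.

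There is, however, a genuine confusion in your step~(3) and in your final paragraph. You describe the boundary of the one-dimensional moduli space as consisting only of configurations ``lower disc plus an index-one strip $\CM_1$'', and you invoke the period gap $T_\lambda(M;R)$ to exclude disc bubbling. Both are wrong for the \emph{curved} setting of this section: the paper is explicitly working in the general case ``including the bubbles'' (see the Introduction), so the period-gap hypothesis is not in force and one-punctured discs --- the $\mathfrak m_0$ contributions --- are present. For the full coderivation relation $\widehat{\mathfrak m}^2 = 0$, the boundary $\partial\overline{\CM}_{k+1}$ consists of \emph{all} two-level disc breakings $\CM_{k_1+2}\times \CM_{k_2+1}$ with $k_1 + k_2 = k$, not just the $k_2 = 1$ (strip) case; in particular the case $k_2 = 0$ --- an $\mathfrak m_0$-disc pinching off at a boundary arc --- is exactly what encodes the terms $\mathfrak m_{k+1}(\dots,\mathfrak m_0,\dots)$ in the curved relation, and these must be retained, not excluded. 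What \emph{is} automatically excluded are \emph{unpunctured} disc and sphere components: for these $E^\pi = \int w^*d\lambda = \int_{\partial} w^*\lambda = 0$ by Stokes (Legendrian boundary, or no boundary), hence they are constant --- no period-gap hypothesis is needed for this, and none is assumed. Once you enlarge your boundary description accordingly, your outline matches the paper's (cited) proof.
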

\begin{proof} 
The formula \eqref{eq:mkB} implies that $\mathfrak m_{k,B}$ above is a $R$-linear map
over the given coefficient field (e.g., $R = \Z, \, \R, \, \C$).
%
%

Then by the compactification of moduli spaces of bordered punctured contact 
instantons with prescribed asymptotics given in \cite{oh:entanglement1},
exactly the same proof as that of \cite{fooo:book1} for the Fukaya algebra of 
a compact Lagrangian submanifold proves that $\widehat m \circ \widehat m = 0$, i.e.,
the curved $A_\infty$-relation holds.  This finishes the proof.
\end{proof}

\section{Legendrian contact instanton DGA and its augmentations}

In this section, we define the LCI-DGA of any Legendrian link by closely mimicking
the exposition of \cite[Section 3]{ekholm:rational-SFT} in which the Legendrian
contact homology was constructed by using the moduli space of pseudoholomorphic curves on symplectization. 
In this section, by adopting the notation of \cite{ekholm:rational-SFT}, we write
$$
(\gamma_{01},\overline \gamma) = (c, \overline b),
$$
i.e., $\gamma_{01}=: c$ and $ \gamma_{(i-1)i} = b_i$ for $i = 1, \cdots, k$.

\subsection{Legendrian contact instanton DGA (LCI-DGA)}

By taking the dual version of the bar complex $BCI(Y,\CE) = TCI(Y,\CE)[1]$
$$
\del: = \widehat{\mathfrak m}^*
$$
of the above $A_\infty$ algebra, we obtain the Eliashberg-Chekanov type DGA
associated to the graded bridged Legendrian link $\CE$.

\begin{defn} The DGA of $(Y,\vec R)$ is the unital noncommutative graded algebra 
$\CA(Y,\vec R;\Lambda)$ over the ring $\K$ generated by the Reeb chords of $\vec R$. The grading of the element
$[c,w]$ is given by
$$
\mu\left([c,w];\alpha_{s(c)t(c)}\right).
$$
\end{defn}
%
%

Now the construction of DGA $\CA(Y,\CE)$ is in order.
For this purpose, we consider the moduli spaces with 
$$
\text{\rm vir}\dim \CM(Y,\vec R;c,\overline b; B) = 0.
$$

\begin{defn}[Legendrian contact instanton DGA] The \emph{Legendrian contact instanton DGA (LCI DGA)}
is the DGA $\CA(Y,\CE)$ whose differential $\del: \CA(Y,\CE) \to \CA(Y,\CE)$ is defined by the formula
\be\label{eq:differential}
\del c = \sum_{\dim \CM_{1;k}(Y,\CE;c,\overline b) = 0} \# (\CM(Y,\CE;c,\overline b)) \, \overline b,
\ee
where $c$ is a Reeb chord and $\overline b = b_1b_2\cdots b_k$ is a word of Reeb chords.
\end{defn}
Then, by taking the dual of the proof of
$\widehat m \circ \widehat m = 0$, Proposition \ref{prop:m2=0} implies $\del^2 = 0$.

\subsection{Augmentations and the linearization of DGA $\CA(Y,\CE)$}

The following is the dual version of the $\mathfrak m_0$-term.

\begin{defn}[Augmentation] An \emph{augmentation} of $\CA(Y,\vec R)$ is a chain map
$\epsilon: \CA(Y,\vec R) \to R$ where $R$ is equipped with the trivial differential.
\end{defn}

Given an augmentation $\epsilon$, we define the algebra isomorphism 
$\E_\epsilon: \CA(Y,\vec R) \to \CA(Y,\vec R)$ by setting
$$
E_\epsilon(c) = c + \epsilon(c)
$$
for each generator. We consider the word-length filtration of $\CA(Y,\vec R)$,
$$
\CA(Y,\vec R) = \CA_0(Y,\vec R) \supset \CA_1(Y,\vec R) \supset \CA_2(Y,\vec R) \supset \cdots.
$$
The differential $\del^\epsilon: = E_\epsilon \circ \del \circ E_\epsilon^{-1}$
respects this filtration, 
$$
\del^\epsilon(\CA_j(Y,\vec R)) \subset \CA_j(Y,\vec R).
$$
\begin{defn} We write
$$
Q(Y,\vec R): = \CA_1(Y,\vec R)/\CA_2(Y,\vec R) \to \CA_1(Y,\vec R)/\CA_2(Y,\vec R)
$$
and call $\del^\epsilon_1:Q(Y,\vec R) \to Q(Y,\vec R)$ the \emph{$\epsilon$-linearized differential}.
The \emph{$\epsilon$-linearized contact instanton contact homology} ($\epsilon$-linearized CI contact homology)
is the $R$-module
$$
H^*_{\LCI}(Y,\vec R;\epsilon) = \ker(\del_1^\epsilon)/\Im (\del_1^\epsilon).
$$
(See \cite{oh-yso:spectral} for the proof via the Morse theory.)
\end{defn}

The following special case has been studied by Seungook Yu and the author in \cite{oh-yso:spectral}.

\begin{exm} Consider $Y = J^1B$ for a closed manifold $B$ equipped with the standard 
contact structure given by the contact form
$$
\lambda = dz - pdq.
$$
Let $R_0 = o_{J^1B}$ be the zero section and consider the collection $\vec R = \{R_0\}$.
Due to the nature of the zero section and the standard contact form, we have
 $\mathfrak{Chord}(R_0) = \emptyset$.
Therefore $(J^1B, \vec R)$ is of Morse-Bott type and all iso-speed Reeb chords have period 0 and hence of
the type $(q,0)$ for any $q \in B = \CZ_{J^1B}$. Therefore we have
$$
\CA(J^1B,o_{J^1B}) \cong (\Omega^*(B),d)
$$
with differential. Hence we have
 $H^*_{\LCI}(J^1B,o_{J^1B}) = H^*(B)$.
\end{exm}

\subsection{Lagrangian cobordisms and DGA maps}

Let $\vec R$ be a Legendrian link with their connected components
$$
(R_1, \cdots, R_k).
$$

We start with recalling the definition of a symplectic manifold of $\dim = 2n$ with cylindrical ends: It is 
a symplectic manifold $x$ that has a compact subset $K$ such that 
$$
X - K = Y^+ \times [0,\infty) \sqcup Y^- \times (-\infty,0]
$$
where each of $(Y^\pm,\lambda^\pm)$ is a finite disjoint union of compact connected contact $(2n-1)$-manifolds 
equipped with contact form $\lambda^\pm$ respectively.
More precisely, we have a symplectic embedding  
$$
\left(Y^+ \times [0,\infty), d(e^s \lambda^\pm)\right) \hookrightarrow (X \setminus K,\omega)
$$
(respectively a symplectic embedding of $Y^- \times (-\infty,0]$ thereinto, which satisfy
the above union property. We call the images of the embeddings
the positive end and the negative end of $X$ respectively.

We can also define a Lagrangian submanifold with cylindrical ends in the similar way by requiring that
there exists another compact subset $K' \subset X$ such that
$$
L \setminus K' = \vec R^+ \times [0,\infty) \sqcup \vec R^- \times (-\infty,0]
$$
where each of $\vec R^\pm$ is either empty or a Legendrian submanifold of $(Y^\pm, \lambda^\pm)$ respectively.

\begin{defn} We say a pair $(Y^\pm, \vec R^\pm)$ of $(\mathsf{Cont},\mathsf{Leg})$
\emph{Lagrangian cobordant} if there exists a pair $(X,L)$ 
of $(\mathsf{Symp},\mathsf{Lag})$ with boundary $\del X = Y^+ \cup Y^-$ (as an oriented manifold)
such that 
\begin{enumerate}
\item $X$ admits collars 
\beastar
([-\epsilon,0] \times Y^+,\{0\} \times Y^+) & \hookrightarrow & (X,\del_+ X), \\
([0, \epsilon] \times Y^-, \{0\} \times Y^-) & \hookrightarrow & (X,\del_-X).
\eeastar
\item There are contact forms $\lambda^\pm$ on $Y^\pm$ respectively such that
$$
i_\pm^*\omega = d\lambda^\pm, \quad \xi^\pm = \ker \lambda^\pm
$$
for the embeddings $i_\pm: Y^\pm \to \del X$, and 
\beastar L \cap ( Y^+ \times [-\epsilon,0]) & = & \vec R^+ \times [-\epsilon,0],\\
L \cap (Y^- \times [0,\epsilon]) & = & \vec R^- \times [0,\epsilon].
\eeastar
\end{enumerate}
We say they are
\emph{exact Lagrangian cobordant} if the following additionally holds:
\begin{enumerate}
\item $\omega = d\lambda$ form some one-form $\lambda$ such that $i_\pm^*\lambda = \lambda^\pm$
respectively.
\item there exists a smooth function $f:L \to \R$ such that $\lambda|_L = df$ and $f$
is constant on $L \cap \del_-X$.
\end{enumerate}
\end{defn}
We would like to mention that we do not rule out the possibility that $\vec R^- = \emptyset$ when $\del L \neq \emptyset$.
The following remark is also worthwhile to be made.

\begin{rem} The conditions $i_\pm^*\lambda = \lambda^\pm$, $\lambda|_L = df$ 
and that $R^\pm$ are Legendrian imply that $f$ must be locally constant on $R^\pm$.
\begin{enumerate}
\item
Then the requirement $f$ begin constant on $L \cap \del_-X'$ is not a loss of any generality if $R^-$ is connected.
For a general Legendrian link $R^-$, we require the local constants be the same over different connected 
components of $R^-$.
\item The values of $f$ on $R^+$ are also locally constant but the local constants will no longer 
be the same componentwise, and we do not require the constants be the same on $R^+$ when we have already made 
the above requirement on $R^-$. 
\end{enumerate}
\end{rem}

\part{Construction of Legendrian CI Fukaya category}

We shall construct the filtered LCI $A_\infty$ category as
a strictly unital and weakly unobstructed filtered $A_{\infty}$ category ${\mathfrak C}$
defined over the group ring $\mathbb K[H_2(Y,\vec R)]$  with the ground ring 
$\mathbb K = \Z, \, \Z_2, \C$ or $\R$
depending on the circumstances. We largely utilize the framework of
\cite[Chapter 7]{oh:book-kias} by adapting it to the current contact/Legendrian case,
the former of  which in turn closely follows that of  \cite{fooo:anchored}, \cite{afooo}, 
\cite{fukaya:immersed} given in the symplectic/Lagrangian case.

\section{The cocycle problems and the abstract index}
\label{sec:cycle-problem}

The discussion on general compact (or tame) Legendrian submanifolds of 
arbitrary length enter in our construction of LCI Fukaya category.
Recall that unlike the general Lagrangian Floer theory, the action functional
$$
\gamma \mapsto \int \gamma^*\lambda
$$
associated to the Legendrian submanifolds is always single-valued, and
its values depend only on the path homotopy class in $\pi_1(Y, R)$. The latter is because
we have $\lambda$  restricted to any Legendrian submanifold vanishes.
This makes the various actions or the action filtrations entering in the study 
of the moduli space of pseudo-holomorphic strips or polygons not depend
on the choice of relevant holomorphic discs.

On the other hand the corresponding problems for the degree
(dimension) and for the orientation of the moduli space of
pseudo-holomorphic strips or polygons still depend on such discs. Some elaboration of this point of
compatibility is now in order.

The moduli space of strip-like contact instantons entering in the relevant counting
problem is an appropriate compactification of the solution space of \eqref{eq:contacton-bdy}.
To define these structures that are compatible with this compactification of the moduli spaces,
 we need to study:
\begin{enumerate}
\item {($\Z$-Grading):} a $\Z$-grading with respect to which $\partial$ has degree 1
\item {(Sign):} a sign on the generators which induces a
$\Z$-module (or at least $\Q$ vector space) structure on
$CI(R,R')$ with respect to which $\partial$ is a $\Z$-module
(resp. a $\Q$ vector space) homomorphism.
\end{enumerate}
In all of these structures, the relative version, i.e., the
`difference' between two generators $\gamma, \, \gamma' \in \Reeb(R,R')$ is canonically defined. 
For given pair $(R,R')$ of \emph{connected} Legendrian submanifolds we consider the set
$$
\pi_2(\gamma,\gamma') = \pi_2(\gamma,\gamma';R,R')
$$
of relevant relative homotopy classes, which are associated to the disc-maps with 2 punctures.

Then the main problem is to solve the following cocycle problem.

\begin{ques}[Cocycle problem]
Denote any of the above differences by $I_2(p,q;u)$ over the path $u:[0,1] \to \Omega(L,L')$
connecting $\gamma,\, \gamma' \in \Reeb(R,R')$.
Solve the cocycle problem of constructing a family of functions $I_1 =\{I_1(\gamma)\}_{\gamma \in \Reeb(R,R') }$
\emph{independent of} the choice of $u \in \CM(\gamma,\gamma';R,R')$ so that the identity
\be\label{eq:I(q)}
I_2(\gamma,\gamma':u) = I_1(\gamma) - I_1(\gamma')
\ee
holds for all choices of $\gamma,\, \gamma' \in \Reeb(R,R')$.
\end{ques}

Handling the aforementioned cocycle problem and amplifying the compatibility
requirement to the whole collection of arbitrary lengths
$$
I = \{I_k(\gamma_0, \ldots, \gamma_k;u)\}_{k \geq 1}, \quad u \in \CF(Y,R; \vec \gamma)
$$
for $R= (R_0, \ldots, R_k)$ and $\vec \gamma= (\gamma_0,\ldots, \gamma_k)$ 
with $\gamma_i \in \mathfrak{X}(R_i, R_{i+1})$, we employ the notion of \emph{abstract index} 
the explanation of which is now in order. The symplectic analog to such a notion was
utilized in \cite{fooo:anchored}.

We first recall the standard notion of nondegeneracy of Reeb chords.

\begin{defn}\label{defn:nondegeneracy-chords}
We say a Reeb chord $(\gamma, T)$ of $(R_0,R_1)$ is nondegenerate if
the linearization map $\Psi_\gamma = d\phi^T(p): \xi_p \to \xi_p$ satisfies
$$
\Psi_\gamma(T_{\gamma(0)} R_0) \pitchfork T_{\gamma(1)} R_1  \quad \text{\rm in }  \,  \xi_{\gamma(1)}
$$
or equivalently
$$
\Psi_\gamma(T_{\gamma(0)} R_0) \pitchfork T_{\gamma(1)} Z_{R_1} \quad \text{\rm in} \, T_{\gamma(1)}M.
$$
\end{defn}
Here $\phi^t_{R_\lambda}$ is the flow generated by the Reeb vector field $R_\lambda$.
We note that this nondegeneracy is equivalent the nondegeneracy of Reeb chords between $R_i$ and $R_j$.
(See \cite[Corollary 4.8]{oh:entanglement1}.)

We will also need a similar set associated to the disc-maps with 1 puncture. For this purpose,
we utilize the already chosen base path $\ell \in \Omega(R,R')$, 
called a \emph{bridge} in \cite{oh-yso:index}, 
and define the set of path homotopy classes of the maps
$
w:[0,1]^2 \to M
$
satisfying the boundary condition
\be\label{eq:condw}
w(0,t) = \ell_{01}(t), \quad w(1,t)\equiv \gamma(t), \quad w(s,0) \in R, \quad w(s,1) \in R'.
\ee
We denote the corresponding set of homotopy classes of the maps $w$ by
$$
\pi_2(\ell;\gamma).
$$
Then we have the obvious gluing map
\be\label{eq:pi2pell0}
\pi_2(\ell;\gamma) \times \pi_2(\gamma,\gamma') \to \pi_2(\ell;\gamma'); \, (\alpha,B) \mapsto \alpha \# B.
\ee

Now we would like to generalize this construction to the case of  Legendrian links
$R = (R_0,\cdots, R_k)$ of connected components $R_i$'s with length $k \geq 2$. 
The map \eqref{eq:pi2pell0} naturally extends to the polygonal map
\be\label{eq:pi2-polygonal}
\pi_2(\ell_{01};\gamma_1) \times \cdots \times \pi_2(\ell_{k-1k};\gamma_k) \times
\pi_2(\gamma_1,\cdots, \gamma_k) \to \pi_2(\ell_{0k};\gamma_0)
\ee
in an obvious way which we write by
\be\label{eq:gluing-rule}
(\alpha_1,\cdots, \alpha_k; B) \mapsto (\alpha_1 \# \cdots \# \alpha_k) \# B.
\ee
Let $R=(R_1,\cdots,R_k)$ be a Legendrian link. We assume that we have
$$
Z_{R_i} \pitchfork R_j
$$
for all $i, \, j= 1,\ldots, k$.

\begin{hypo}[General position]
\label{hypo:general-position}
We  assume the following general position conditions for the Legendrian link
$R= (R_0,\cdots, R_k)$ :
\begin{enumerate}
\item They are pairwise transversal in the above sense for each $(R_i, R_j)$ for all pairs
$(i,j)$ with the case $i = j$ included.
\item No three connected components carry Reeb chords that overlap on an open arc.
\end{enumerate}
\end{hypo}

Let $\mathfrak{R} = (R^0,\cdots,R^k)$ be a chain of Legendrian links and $\gamma_{i(i+1)}
\in \mathsf R^i \cap \mathsf R^{i+1}$. ($\gamma_{(k+1)k} = \gamma_{0k}$ and $\mathsf R^{k+1} 
= \mathsf R^0$ as convention.)
We write 
$$
\vec \gamma = \left(\gamma_{01},\cdots,\gamma_{(k-1)k}\right).
$$
For given boundary marked points $(z_0, \cdots z_k) \subset \del \Sigma$ with $\Sigma = D^2$,
we consider the punctured bordered Riemann surface $\dot \Sigma = \Sigma \setminus \{z_0, \cdots, z_k\}$
equipped with strip-like coordinates $\pm [0, \infty)$ near $z_i$, we
consider the set of homotopy class of maps
$w : \dot \Sigma  \to M$ satisfying
$$
w(\overline{z_i z_{i+1} }) \subset R_i, \quad
w((\infty,t)_{i(i+1)}) = \gamma_{i(i+1)}.
$$
We denote it by $\pi_2(R;\vec \gamma)$.

The following definition is a contact-Legendrian counterpart of 
the notion which was introduced  in relation to the construction of the filtered Fukaya category
\cite{fooo:anchored}. We also refine its definition by one the one hand
removing the unnecessary notion of \emph{admissibility} therefrom, and on the other
hand considering a \emph{monoids} as its codomain instead of just \emph{modules}.
\begin{defn}\label{defn:abstract-index}
Let $(\K,\circ)$ be a monoid. We say a collection of maps
$$
I = \{I_k : \pi_2(\CE;\vec \gamma) \to \K\}_{k=1}^\infty
$$
an {\it abstract index} over the collection of
graded bridged Legendrian links 
$$
\CE = (\CE^0, \cdots, \CE^k),
$$
if they satisfy the following gluing
rule: Under the gluing map \eqref{eq:pi2-polygonal}, we have
$$
\circ_{i=0}^{k-1} \left(I_1([w _{i(i+1)}])\right) \circ I_{k+1}(B) = I_1(([w _{01}]\# \cdots \# [w _{(k-1)k}]) \# B).
$$
\end{defn}

\section{Polygonal Maslov index}
\label{sec:polygonal}

For the following discussion, we consider the cases $k \geq 1$,
i.e, $\operatorname{length} \CE \geq 2$.
For each given such chains, we define 
$$
C^\infty(D^2,\CE;\vec \gamma), \quad \vec \gamma =\{\gamma_{01},\gamma_{12},\cdots, \gamma_{0k}\}
$$
to be the set of all $w: \dot \Sigma \to M$ such that
\be\label{eq:wzjpj}
w(\overline{z_{(j-1)j}z_{j(j+1)}}) \subset R_j, \quad w (\infty_j,t) = \gamma_{(j-1)j} \in 
\mathfrak{X}(R_{j-1}, R_j)
\ee
and that it is continuous on $D^2$ and smooth on $\dot D^2$. We will
define a topological index, which is associated to each homotopy
class $B \in \pi_2(\mathsf R;\vec \gamma)$.
For this purpose, we use the notion of \emph{graded bridged Legendrian links}.

\begin{defn}[Polygonal Maslov index]\label{defn:polygonal}
Let $\CE=(R_0, \cdots, R_k)$ be a Legendrian link in general position
(Hypothesis \ref{hypo:general-position}).
We define the topological index, denoted by $\mu(\CE, \vec \gamma;B)$,
to be the Maslov index of
the loop $\widetilde\alpha_w$, i.e.,
$$
\mu(\CE, \vec \gamma;B) = \mu(\widetilde\alpha_w).
$$
\end{defn}

Now consider a graded bridged Legendrian link
$\CE = (R_0, \cdots, R_k)$.
By definition, it assigns a grading $ \lambda_{ij} $ along $\ell_{ij}$.

\begin{lem}\label{thm:poly} Let $\CE$ be a graded bridged Legendrian link. Let
$[\ell_{i(i+1)},w_{i(i+1)}]$ for $k = 0, \ldots, k-1$ and  $B \in \pi_2(\CE,\vec \gamma)$ be
given. Consider the class $[\gamma_{0k},w_{0k}]$ be the one determined by
\eqref{eq:gluing-rule}. Then we have
\begin{equation}\label{musum}
\mu([\gamma_{0k},w_{0k}]) = \mu(\CE,\vec \gamma;B) + \sum_{i=0}^{k-1}
\mu([\gamma_{i(i+1)},w_{i(i+1)}];\lambda_{i(i+1)})
\end{equation}
\end{lem}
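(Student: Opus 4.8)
The plan is to reduce the identity \eqref{musum} to an additivity property of the Maslov index under concatenation of loops of Lagrangian subspaces in $(\R^{2n},\omega_0)$, by carefully tracking how the various boundary data are glued. First I would recall that, under the diagonal trivialization $\Phi$ of Definition~\ref{defn:Phi}, each Reeb chord $\gamma_{i(i+1)}$ together with its chosen bounding square $w_{i(i+1)}$ and the grading $\lambda_{i(i+1)}$ of the bridge $\ell_{i(i+1)}$ determines the Lagrangian path $\widetilde\alpha^\Phi_{[w_{i(i+1)},\gamma_{i(i+1)}];\lambda_{i(i+1)}}$ over (the pentagon replacing) $\del[0,1]^2$, whose Maslov index is by Definition~\ref{defn:grading-chord} precisely $\mu([\gamma_{i(i+1)},w_{i(i+1)}];\lambda_{i(i+1)})$. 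Similarly the polygonal datum $B\in\pi_2(\CE,\vec\gamma)$, through Definition~\ref{defn:polygonal}, contributes the loop $\widetilde\alpha_w$ with Maslov index $\mu(\CE,\vec\gamma;B)$, and the glued square $[\gamma_{0k},w_{0k}]$ contributes a loop whose index is $\mu([\gamma_{0k},w_{0k}])$.

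Next I would make explicit the geometric gluing picture behind \eqref{eq:gluing-rule}: the representative $w_{0k}$ of the glued class is obtained by pasting the $k$ squares $w_{i(i+1)}$ along the bridges, inserting the polygonal map representing $B$ in the middle, exactly as in the symplectic prototype of \cite[Lemma~5.12]{fooo:anchored} (reproduced here as Lemma~\ref{lem:poly}). On the level of the boundary Lagrangian paths, the interior bridges $\ell_{i(i+1)}$ appear twice with opposite orientations when one traverses $\del[0,1]^2$ of the glued square, so the pieces $\alpha_{\ell_{i(i+1)}}$ and $\alpha_{\widetilde\ell_{i(i+1)}}$ cancel in pairs; what survives is precisely the concatenation of the $\gamma$-sides and $\alpha^-$-corners of the individual squares together with the polygonal loop from $B$. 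I would therefore decompose $\widetilde\alpha^\Phi_{[w_{0k},\gamma_{0k}]}$ as a concatenation of the $k$ paths $\widetilde\alpha^\Phi_{[w_{i(i+1)},\gamma_{i(i+1)}];\lambda_{i(i+1)}}$ and the polygonal loop $\widetilde\alpha_w$, up to homotopy rel endpoints through Lagrangian paths; this is the content one extracts from the proof of Lemma~\ref{lem:poly}. Additivity of the Maslov index under such concatenations of loops (the standard Robbin--Salamon/Hörmander axioms, applied in the single trivialization $\Phi$ so that all paths live in a fixed $(\R^{2n},\omega_0)$) then yields \eqref{musum} directly.

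The main obstacle is the bookkeeping at the corners. The paths $\widetilde\alpha^\Phi_{[w_{i(i+1)},\gamma_{i(i+1)}];\lambda_{i(i+1)}}$ are defined over \emph{pentagons}, not squares (Remark~\ref{rem:pentagon}), because of the inserted corner-resolving path $\alpha^-$ at the $(1,1)$-vertex; one must check that when the squares are glued the extra $\alpha^-$ edges either cancel against their time-reversals at the interior corners, or combine to produce exactly the single $\alpha^-$ edge of the glued pentagon for $[\gamma_{0k},w_{0k}]$, with no leftover Maslov contribution. This amounts to verifying that the choices of $\alpha^-$ at the various corners can be made homotopic, rel endpoints inside $Lag(\xi_p,d\lambda|_p)\setminus Lag_1(\xi_p,d\lambda|_p;T_pR')$, to a concatenation that telescopes; the conditions imposed on $\alpha^-$ in \cite[Proposition~8.3]{oh-yso:index} (the endpoint constraints and the positivity of $-(\alpha^-)'(1)$) are exactly what makes this telescoping well-defined and independent of choices. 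Once this corner-matching lemma is in place, the rest is a routine application of the concatenation axiom for the Maslov index, and the proof concludes.
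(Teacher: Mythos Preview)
Your proposal is correct and follows essentially the same approach as the paper's proof: both reduce \eqref{musum} to the additivity of the Maslov index once one recognizes that the boundary Lagrangian data on the glued domain assemble into a null-homologous loop in the Lagrangian Grassmannian. The only cosmetic difference is packaging: the paper moves $\mu([\gamma_{0k},w_{0k}])$ to the right-hand side by introducing the $s$-time-reversed square $w_{k0}^+(s,t):=w_{0k}(1-s,t)$ and then asserts the single homology relation
\[
\sum_{i=0}^{k-1}\alpha_{([\gamma_{i(i+1)},w_{i(i+1)}];\lambda_{i(i+1)})}
+\alpha_{([\gamma_{k0},w_{k0}^+];\lambda_{k0}^+)}
+\widetilde\alpha_w\;\sim\;0,
\]
whereas you phrase the same fact as a direct concatenation decomposition of $\widetilde\alpha^\Phi_{[w_{0k},\gamma_{0k}]}$. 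Your more explicit discussion of the pentagon corners and the telescoping of the $\alpha^-$ pieces is exactly the verification the paper leaves implicit in the symbol ``$\sim$''.
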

\begin{proof} Let $\gamma_{k0} = \gamma_{0k} \in L_0 \cap L_k$ and consider the $s$ time-reversal
path $w_{k0}$ defined by $w_{k0}^+(s,t): = w_{0k}(1-s,t)$.
Since $\mu([\gamma_{k0},w_{k0}^+];\lambda_{0k})$ is defined as the Maslov index of
the time-reversal loop of $\alpha_{([\gamma_{0k},w_{0k}];\lambda_{0k})}$,
the equality (\ref{musum}) follows from
$$
\sum_{i=0}^{k-1} \alpha_{([\gamma_{i(i+1)},w_{i(i+1)]};\lambda_{i(i+1)})}
+ \alpha_{([\gamma_{k0},w_{k0}^+];\lambda_{k0}^+)} +  \widetilde{\alpha}_w \sim 0,
$$
where $\widetilde\alpha_w$
is as in Definition \ref{defn:grading-chord}
with $B = [w_{01}]\# \cdots \# [w_{(k-1)k}] \# [w_{k0}]$ and
$\sim$ means homologous.
\end{proof}

When the length of $\CE$ is $k+1$, we define
$
\mu_k(B) = \mu(\CE,\vec \gamma;B)
$
where $B \in \pi_2(\CE;\vec \gamma)$.

\begin{prop} Let $R, \, R'$ be a two connected Legendrian submanifolds and
$(\ell,\alpha)$ be a graded bridge between them. Then we
define $\mu_1:\pi_2(\ell,\gamma) \to \Z$ with $\Z$ regarded as an additive
monoid by setting
$$
\mu_1(\alpha) : = \mu([\gamma,w])
$$
for a representative $[\gamma,w]$ of the class $\alpha \in \pi_2(\ell; \gamma)$.

Then the sequence of maps $\mu = \{\mu_k\}_{k=1}^\infty$ with
$$
\mu_k: \pi_2(\CE;\vec \gamma) \to \Z, \quad k = \text{\rm leng}(\vec \gamma) \geq 1
$$
defines an abstract index with $(\Z,+)$ regarded as an additive monoid.
\end{prop}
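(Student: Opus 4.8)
The plan is to recognize that the gluing rule required of an abstract index in Definition~\ref{defn:abstract-index} is, once the notation is unwound, exactly the additive polygonal identity already proved as Lemma~\ref{thm:poly}; so the proof reduces to (i) checking that each $\mu_k$ genuinely descends to homotopy classes, and (ii) observing that with the abelian codomain $(\Z,+)$ the ordered composition $\circ_{i=0}^{k-1}$ occurring in Definition~\ref{defn:abstract-index} is simply the ordinary sum $\sum_{i=0}^{k-1}$, so no ordering ambiguity can occur. This last remark is precisely why it does no harm to allow a possibly nonabelian monoid as codomain in general while working with $(\Z,+)$ here.

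First I would record the well-definedness. For $\mu_1\colon\pi_2(\ell;\gamma)\to\Z$ this is the remark following Definition~\ref{defn:grading-chord}: the Maslov index $\mu([\gamma,w])$ of the bundle pair \eqref{eq:bundlepair-square} depends only on the relative homotopy class $[w,\gamma]$ and not on the choice of diagonal trivialization $\Phi$. For the polygonal index $\mu_k(B)=\mu(\CE,\vec\gamma;B)$ it is immediate from Definition~\ref{defn:polygonal}, since $\mu(\CE,\vec\gamma;B)=\mu(\widetilde\alpha_w)$ is the Maslov number of the boundary loop attached to a representative $w$ of $B$, and homotopic representatives produce homotopic loops, hence equal Maslov numbers. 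I would also note that for $k=1$ the polygonal index restricts to the grading $\mu([\gamma,w])$ of Definition~\ref{defn:grading-chord}, so the two descriptions of $\mu_1$ agree, and that for every graded bridged Legendrian link $\CE$ of the appropriate length these maps are defined uniformly, which is what ``over the collection of graded bridged Legendrian links'' demands.

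With this in place, what must be checked is that under the polygonal gluing map \eqref{eq:pi2-polygonal}--\eqref{eq:gluing-rule}, which sends the classes $[\gamma_{i(i+1)},w_{i(i+1)}]\in\pi_2(\ell_{i(i+1)};\gamma_{i(i+1)})$ and $B\in\pi_2(\CE;\vec\gamma)$ to $[\gamma_{0k},w_{0k}]=([w_{01}]\#\cdots\#[w_{(k-1)k}])\#B$, one has
\[
\sum_{i=0}^{k-1}\mu_1\!\left([\gamma_{i(i+1)},w_{i(i+1)}]\right)\;+\;\mu_k(B)\;=\;\mu_1\!\left([\gamma_{0k},w_{0k}]\right).
\]
Under the identifications $\mu_1([\gamma_{i(i+1)},w_{i(i+1)}])=\mu([\gamma_{i(i+1)},w_{i(i+1)}];\lambda_{i(i+1)})$, with $\lambda_{i(i+1)}$ the grading carried by the bridge $\ell_{i(i+1)}$ of $\CE$, and $\mu_k(B)=\mu(\CE,\vec\gamma;B)$ (the subscript being fixed by $\leng(\vec\gamma)$ as in the statement), this is word for word the identity \eqref{musum} of Lemma~\ref{thm:poly}. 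Hence $I:=\mu=\{\mu_k\}_{k\ge1}$ satisfies the gluing rule of Definition~\ref{defn:abstract-index}, i.e.\ it is an abstract index valued in the additive monoid $(\Z,+)$.

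I do not expect a genuine obstacle here: no transversality or analytic input is used, the assertion being purely topological and entirely carried by Lemma~\ref{thm:poly}. The only point that needs care is bookkeeping --- keeping the length/subscript conventions straight between the statement of the Proposition and Definition~\ref{defn:abstract-index}, and making sure the \emph{same} bridge gradings $\lambda_{i(i+1)}$ feed both the bridge-slot terms $\mu_1$ and the polygonal term $\mu_k$. Both data are fixed the moment $\CE$ is declared graded bridged, so there is nothing substantive to reconcile.
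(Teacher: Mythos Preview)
Your proposal is correct and matches the paper's approach: the paper states this proposition immediately after Lemma~\ref{thm:poly} without a separate proof, treating it as a direct consequence of the additive identity \eqref{musum}, which is exactly what you do. Your added remarks on well-definedness and on the commutativity of $(\Z,+)$ rendering the ordered composition into a plain sum are helpful bookkeeping that the paper leaves implicit.
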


\begin{rem} While we consider the bridges between two connected components of 
a given Legendrian link, we could also consider the more general cases of 
two different Legendrian links and replacing $R_i$ by
the support $\mathsf R^j$ for a tuple of Legendrian links as in 
\cite{oh-yso:index}. For the simplicity of exposition here, we restrict to the current case
leaving a full discussion elsewhere. Such a generalization will be the playground
for the study of more complex \emph{entanglement} structure. The relevant formalism 
would be as follows.

We consider a chain of Legendrian links $\mathfrak R = (\mathsf R^0, \cdots, \mathsf R^k)$ and
a chain of Reeb chords $(\gamma_{01},\gamma_{12},\cdots, \gamma_{0k})$ with 
$$
\gamma_{(i-1)i} \in \mathfrak{X}(\mathsf R^{i-1}, \mathsf R^i)
$$
for $i=0, \cdots, k$ (counted modulo $k+1$). We consider $\Sigma \cong D^2$
with marked points $\{z_{01},z_{12},\cdots, z_{0k}\}$ ordered counter-clock-wise
and denote $\dot \Sigma$. We denote by $\CE$ the chain of graded bridged Legendrian links
associated to $\mathfrak R$.

For each given such chains, we define 
$$
C^\infty(D^2,\CE;\vec \gamma), \quad \vec \gamma =\{\gamma_{01},\gamma_{12},\cdots, \gamma_{0k}\}
$$
to be the set of all $w: \dot \Sigma \to M$ such that
\be\label{eq:wzjpj}
w(\overline{z_{(j-1)j}z_{j(j+1)}}) \subset \mathsf R^j, \quad w(\infty_j,t)= \gamma_{(j-1)j} \in 
\mathfrak{X}(\mathsf R^{j-1}, \mathsf R^j)
\ee
and that it is continuous on $D^2$ and smooth on $\dot D^2$.

\end{rem}

\section{Coherent orientations of the moduli spaces}
\label{sec:orientation}

For the purpose of  defining various operators entering in the construction of 
Fukaya-type $A_\infty$ category, we need to provide a
compatible system of orientations on the moduli space of strip-like contact instantons and
other moduli spaces of polygonal punctured bordered contact instantons. 

We will show that coherent orientations can be packaged into an example of 
\emph{abstract index} (Definition \ref{defn:abstract-index}) to the context of \emph{Legendrian
chains} instead of Lagrangian chains. (See \cite{fooo:anchored}
for the relevant discussion for the Lagrangian intersection Floer theory.)
For this purpose, we will  describe the codomain monoid of this abstract index.

We start with some discussion of relatively spin Legendrian submanifolds.

\subsection{Relatively spin Legendrian submanifolds}

Recall that we have a canonical exact sequence 
$$
0  \to \xi \to TM \to TM/\xi \to 0
$$
of vector bundles, and when $(M,\xi)$ is coorientable, $TM/\xi$ is trivial.
With a choice of coorientation of $\xi$, $TM/\xi$ is trivialized and so 
trivialization of $\xi$ naturally provides one for $TM$ extending the trivialization of $TM/\xi$.
When a (postive) contact form $\lambda$ of cooriented $\xi$ is given,
 the above exact sequence splits
$$
TM = \xi \oplus \R\langle R_\lambda \rangle
$$
where $R \langle R_\lambda \rangle \cong TM/\xi$ as a trivialized line bundle. 

We now recall  the notion of relative spin submanifold applied to the Legendrian submanifold.

\begin{defn} Suppose $(M,\xi)$ is a contact manifold equipped with coorientation.
We say a Legendrian submanifold $R \subset (M,\xi)$ is \emph{relatively spin} for $(M,\xi)$
if it is orientable and there exists a class $\text{\rm st} \in H^2(M,\Z_2)$ such that
$\text{\rm st}|_R = w_2(TR)$ for the Stiefel-Whitney class $w_2(TR)$ of $TR$.
\par
A chain $(R_0,R_1,\cdots,R_k)$ or a pair $(R_0,R_1)$ of Legendrian submanifolds is
said to be \emph{relatively spin} if there exists a class
$\text{\rm st} \in H^2(M,\Z_2)$ satisfying $\text{\rm st}|_{R_i} = w_2(TR_i)$ for each
$i = 0, 1, \cdots, k$.
\end{defn}

We fix such a class $\text{\rm st} \in H^2(M,\Z_2)$ and a  relative triangulation of $(M, \mathsf R)$.
Denote by $M^{(k)}$ its $k$-skeleton. There exists a
real subbundle $V(\text{\rm st})$ on $M^{(3)}$ with
$w_1(V(\text{\rm st})) = 0, \, w_2(V(\text{\rm st})) = \text{\rm st}$. Now suppose that $R$ is
relatively spin and $R^{(2)}$ be the 2-skeleton of $R$.
Then $V \oplus TR$ is trivial on the 2-skeleton of $R$. 

\begin{defn} We define a $(M,\text{\rm st})$-relative
spin structure of $R$ to be a choice of $V$ and a spin structure of the
restriction of the vector bundle $V \oplus TR$ to $R^{(2)}$.
\par
The relative spin structure of a chain of Legendriian submanifolds
$(R_0,\cdots,R_k)$ is defined in the same way by
using the same $V$ for all $R_i$.
\end{defn}

\subsection{Orientatations $o_{\gamma}$, $o_{[\gamma;w]}$ and $o_{(\gamma,\gamma';B)}$}

Let $\gamma, \gamma' \in \mathfrak{X}(R,R')$ and $B \in \pi_2(\gamma,\gamma')$. 
We consider finite energy, (i.e.,
 $E(u) = E^\pi(u) + E^\perp(u)  < \infty$) solutions $u
: \R\times [0,1] \to M $ of the equation 
\be\label{eq:contacton-bdy}
\begin{cases}
\delbar^\pi u = 0, \, d(u^*\lambda \circ j) = 0 \\
u(\tau,0) \in R_0, \quad u(\tau,1) \in R_1,  \\
u(-\infty,\cdot) \equiv \gamma, \quad u(\infty,\cdot) \equiv \gamma'.
\end{cases}
\ee
\par
If $(R_0, R_1)$ is a relatively spin pair, then each $\CM(\gamma,\gamma';B)$ is
orientable. Furthermore a choice of relative spin structures gives
rise to a compatible system of orientations for $\CM(\gamma,\gamma';B)$ for all
pair $\gamma, \, \gamma' \in \mathfrak{X}(R_0,R_1)$ and $B \in \pi_2(\gamma,\gamma')$.

We remark that relative spin structure determines a trivialization of 
$$
V_{\lambda_{01}(0)} \oplus T_{\lambda_{01}(0)} R_0 =
V_{\lambda_{01}(0)} \oplus \lambda_{01}(0)
$$
and 
$$
V_{\lambda_{01}(1)} \oplus T_p R_1 = V_{\lambda_{01}(1)} \oplus \lambda_{01}(1).
$$
We take and fix a way to extend this trivialization to the family
$\ell_{01}^*V \oplus \lambda_{01}$ on $[0,1]$.

We put
\be
Z_+ = \{ (\tau,t) \in \R^2 \mid \tau \le 0, \,\, 0\le t\le 1\} 
\ee
and the projection $\pi: Z_+ \to [0,1]$. We 
consider $W^{1,p}$ sections $\eta \in (\gamma\circ \pi)^*TM$ satisfying
the linearized contact instanton equation
and the relevant boundary condition. It defines a linear Fredholm operator
$$
D\Upsilon(u;(\gamma;\lambda)) : W^{1,p}((\gamma \circ \pi)^*TM;\lambda) 
\to L^p((\gamma \circ \pi)^*TM \otimes \Lambda^{0,1}).
$$
It defines virtual vector space
$$
\Index D\Upsilon(u;(\gamma;\lambda))
$$
and the determinant line $\det \Upsilon_{(\CE;\vec \gamma;B)}$. It defines an 
orientation which denote by $o_\gamma$.

Moving $u$ we obtain a family of Fredholm operator $D\Upsilon(u;(\gamma;\lambda))$ parameterized by a suitable completion of the off-shell space
$\CF(\gamma,\gamma';B)$ for $B \in \pi_2(\gamma, \gamma' ; R,R')$. Therefore we have
a well-defined determinant line bundle
 \be\label{eq:detB1} \det
\Upsilon_{(R,R';\gamma, \gamma';B)} \to \CF(\gamma,\gamma';B). 
\ee

The following theorem can be proved in the same way as in \cite[Chapter 8]{fooo:book2}.
\begin{thm}\label{thm:1-puncture-ori}
Let $(R_0,R_1)$ be a relatively spin pair of oriented Legendrian
submanifolds. Then the following hold:
\begin{enumerate}
\item For each fixed $\alpha$ the bundle
\eqref{eq:detB1} is trivial.
\item
If we fix a choice of system of
orientations $o_{\gamma}$ on $\operatorname{Index}\,D\Upsilon(u;(\gamma;\lambda))$
for each $\gamma$, then it
determines orientations on \eqref{eq:detB1}, which we denote by $o_{[\gamma,w]}$.
\item 
Moreover $o_{\gamma}$, $o_{[\gamma,w]}$ determine an orientation of
$\CM(p\gamma,\gamma';B)$ denoted by $o(\gamma,\gamma';B)$ by the gluing rule
\be\label{eq:ori-gluing}
o_{[\gamma',w\#B]} = o_{[\gamma,w]} \# o(\gamma,\gamma';B)
\ee
for all $\gamma, \, \gamma' \in \frak{X}(R,R')$ and $B \in \pi_2(\gamma,\gamma')$ so that
they satisfy the gluing formulae
$$
\del o(\gamma,\gamma'';B) = o(\gamma,\gamma';B_1) \# o(\gamma',\gamma''';B_2)
$$
whenever the virtual dimension of $\CM(\gamma,\gamma'';B)$ is $1$.  Here
$\del o(\gamma,\gamma'';B)$ is the induced boundary orientation of
the boundary $\del \CM(\gamma,\gamma'';B)$ and
$B = B_1 \# B_2$ and $\CM(\gamma,\gamma';B_1) \# \CM(\gamma',\gamma'';B_2)$ appears
as a component of the boundary $\del \CM(\gamma,\gamma'';B)$.
\end{enumerate}
\end{thm}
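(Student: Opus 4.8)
The plan is to reduce the orientation problem for the linearized contact-instanton operator $D\Upsilon(u)$ to the classical orientation theory of Cauchy--Riemann operators with Lagrangian boundary conditions, and then transcribe the construction of \cite[Chapter 8]{fooo:book2} essentially verbatim. The first step is to decompose the linearization. By Theorem \ref{thm:linearization}, writing $Y = Y^\pi + \lambda(Y)R_\lambda$, the operator $D\Upsilon(u) = (D\Upsilon^\pi(u), D\Upsilon^\perp(u))$ is block triangular: $D\Upsilon^\pi(u)$ is a real-linear Cauchy--Riemann operator $\delbar^{\nabla^\pi}$ plus a zeroth-order term acting on sections of the symplectic vector bundle $(u^*\xi, u^*d\lambda)$ with totally real boundary condition $(\del u)^*T\vec R \subset u^*\xi$, up to a zeroth-order coupling to $\lambda(Y)$, while $D\Upsilon^\perp(u)$ is $-\Delta$ acting on functions $\lambda(Y)$ with Dirichlet boundary condition, up to a first-order coupling to $Y^\pi$. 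Homotoping the two off-diagonal coupling terms to zero through Fredholm operators yields a canonical isomorphism
$$
\det D\Upsilon(u)\ \cong\ \det\bigl(\delbar^{\nabla^\pi}\ \text{on}\ (u^*\xi,\,(\del u)^*T\vec R)\bigr)\,\otimes\,\det\bigl(-\Delta_{\mathrm{Dir}}\bigr).
$$
The Dirichlet-Laplacian factor carries a canonical orientation of its determinant line (a real-linear, index-$0$ operator on functions vanishing on $\del\dot\Sigma$ whose kernel and cokernel are naturally dual, with invertible model operator on the infinite strip), exactly as the $R_\lambda$-directions are handled in \cite{oh-wang:CR-map1} and in the closed-string orientation discussion of \cite{oh:contacton}. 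Thus the entire orientation problem is carried by the $\xi$-component, which is a genuine Cauchy--Riemann operator on a symplectic vector bundle with oriented Lagrangian boundary subbundles $T_uR_0,\,T_uR_1 \subset \xi$.

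Next I would invoke the FOOO framework for this $\xi$-part. Since $T_xR_i \subset \xi_x$ and $d\lambda|_\xi$ is symplectic, each $TR_i$ is canonically a Lagrangian subbundle of $(\xi, d\lambda)|_{R_i}$; the hypothesis that $(R_0,R_1)$ is relatively spin provides a class $\mathrm{st}\in H^2(M;\Z_2)$ with $\mathrm{st}|_{R_i} = w_2(TR_i)$, a stabilizing real bundle $V=V(\mathrm{st})$ over $M^{(3)}$, and spin structures on $V\oplus TR_i$ over the $2$-skeletons. With these data the apparatus of \cite[Chapter 8]{fooo:book2} applies word for word to $(u^*\xi, (\del u)^*T\vec R)$: trivialization of the determinant line of a Cauchy--Riemann operator on a disc with one Lagrangian boundary condition via $V$ and the spin structure; the gluing/excision formula for determinant lines under concatenation of bordered domains; and the analysis of the standard half-plane operators at punctures, with ``intersection points'' replaced by ``Reeb chords'' and the asymptotic operator at $\gamma$ built from the linearized Reeb flow $\Psi_\gamma$ restricted to $\xi$, transverse to $T_{\gamma(1)}R_1$ as in Definition \ref{defn:nondegeneracy-chords}.

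Finally I would read off the three assertions. Triviality of $\det\Upsilon_{(R,R';\gamma,\gamma';B)} \to \CF(\gamma,\gamma';B)$ (assertion (1)) follows because, for each fixed $B$, the obstruction to triviality is an $w_1$-class killed once the relative spin data is fixed, precisely as in \cite[Chapter 8]{fooo:book2}. A choice of $o_\gamma$ on $\Index D\Upsilon(u;(\gamma;\lambda))$ for every Reeb chord $\gamma$ then pins down, via this canonical triviality together with the decomposition of $\det D\Upsilon(u)$ over a square-domain map $w$ into an ``outgoing capped half-strip'' piece and a ``capping'' piece, an orientation $o_{[\gamma,w]}$ of \eqref{eq:detB1} (assertion (2)). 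Assertion (3) --- the rules $o_{[\gamma',w\#B]} = o_{[\gamma,w]}\# o(\gamma,\gamma';B)$ and $\del o(\gamma,\gamma'';B) = o(\gamma,\gamma';B_1)\# o(\gamma',\gamma'';B_2)$ --- is the determinant-line gluing formula of \cite[Chapter 8]{fooo:book2} transported through the decomposition of the first step, using that gluing of domains along a strip matches the gluing of the $\perp$-Laplacian factors compatibly with their canonical orientations.

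The main obstacle I expect is the honest bookkeeping of the contact ($R_\lambda$) direction: one must verify that the homotopy killing the off-diagonal terms in $D\Upsilon(u)$ stays Fredholm and behaves well under the degeneration and gluing of domains, and that the canonical orientation of the Dirichlet-Laplacian determinant line is preserved under those gluings and agrees with the one implicitly used when one orients $\Index D\Upsilon(u;(\gamma;\lambda))$ at a chord --- i.e.\ that no spurious sign appears when passing between ``the Cauchy--Riemann operator on $\xi$'' and ``the full contact-instanton operator'' in the relevant families. Once this naturality is pinned down, the remainder is a transcription of \cite[Chapter 8]{fooo:book2}; a secondary, mild point is checking connectedness of each $\CF(\gamma,\gamma';B)$ so that ``trivial bundle'' is unambiguous.
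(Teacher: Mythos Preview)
Your proposal is correct and is in fact considerably more detailed than what the paper offers: the paper simply asserts that the theorem ``can be proved in the same way as in \cite[Chapter 8]{fooo:book2}'' and gives no further argument. Your explicit decomposition of $D\Upsilon(u)$ into the $\xi$-component (a genuine Cauchy--Riemann operator with Lagrangian boundary) and the Reeb-direction Dirichlet Laplacian, followed by the observation that the latter carries a canonical orientation, is exactly the reduction one needs to make the citation of \cite[Chapter 8]{fooo:book2} meaningful, since the full linearized contact-instanton operator is not literally a $\delbar$-operator. The paper leaves this reduction implicit; you have spelled it out, and your identification of the main technical checkpoint --- that the homotopy killing the off-diagonal couplings remains Fredholm and is compatible with domain gluing so that no extraneous sign enters from the $R_\lambda$-factor --- is the right thing to flag.
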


\subsection{Construction of coherent orientations}

We generalize the discussion on the moduli spaces of
$\overline \CM(\gamma,\gamma')$ of contact instanton trajectories to the moduli space of polygons,
whose explanation is in order.

Consider a disc $D^2$ with $k+1$ marked points $z_{01}, z_{12}, \cdots, z_{0k}
\subset \del D^2$ respecting the counter clockwise cyclic order of
$\del D^2$. We take a neighborhood $U_i$ of $z_{i(i+1)}$ and a conformal
diffeomorphism
$$
 \varphi_i: U_i \setminus \{z_{i(i+1)}\} \subset D^2 \cong
(-\infty,0] \times [0,1]
$$
 of each $z_{i(i+1)}$.
For any smooth map
$$
w: D^2 \to M; \quad w(z_{i(i+1)}) = \gamma_{i(i+1)}, \, w(\overline{z_{(i-1)i}z_{i(i+1)}}) \subset R_i
$$
we deform $w$ so that it becomes constant on
$\varphi_i^{-1}((-\infty,-1] \times [0,1]) \subset U_i$, i.e., $ w(z)
\equiv \gamma_{(i-1)i} $ for all $z \in \varphi^{-1}((-\infty,-1] \times
[0,1])$. So assume this holds for $w$ from now on. We now consider
the linearlized $\Upsilon$ equation
under  the boundary condition
\eqref{eq:wzjpj} becomes
\be\label{eq:linearCR}
\xi(s,0) \in TR_{i-1}, \quad \xi(s,1) \in TR_{i}.
\ee
\par
The boundary value problem \eqref{eq:linearCR} induces a Fredholm operator,
which we denote by
\be\label{delbawl}
D\Upsilon_{w;\CE} :
W^{1,p}(D^2;w^*TM;\CE) \to L^p(D^2;w^*TM \otimes \Lambda^{0,1}).
\ee
Moving $w$ we obtain a family of Fredholm operator $D\Upsilon_{w;\CE}$ parameterized by a suitable completion of $\CF(\vec
p;\CE;B)$ for $B \in \pi_2(\vec \gamma ; \CE)$. Therefore we have
a well-defined determinant line bundle
 \be\label{eq:detB} \det
\Upsilon_{(\CE;\vec \gamma;B)} \to \CF(\CE;\vec \gamma;B). 
\ee 
The following theorem an analog to  \cite[Theorem 6.7]{fooo:anchored}
which can be proved in a similar way.
\begin{thm} \label{thm:fooo-ori2}
Suppose $\CE = (R_0,\cdots, R_k)$ is a relatively spin graded bridged Legendrian link.
(See Definition \ref{defn:graded-bridges}.)
Then the following hold:
\begin{enumerate}
\item
Each determinant line bundle $\det
D\Upsilon_{(\vec \gamma;\CE;B)}$ is trivial.
\item
If we fix orientations $o_{\gamma_{ij}}$
on $\operatorname{Index}\,D\Upsilon_{(\vec \gamma;\CE;B)}$ as in Theorem \ref{thm:1-puncture-ori}
for all $\gamma_{ij} \in \mathfrak{X}(R_i,R_j)$, with $R_i$
transversal to $R_j$, then we have a
system of orientations, denoted by $o_{k+1}(\vec \gamma;\CE;B)$, on the
bundles $(\ref{eq:detB})$.
\end{enumerate}
\end{thm}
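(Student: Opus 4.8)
The plan is to adapt the orientation formalism of \cite[Chapter 8]{fooo:book2} and \cite[Section 6]{fooo:anchored} to the present polygonal contact instantons; the only genuinely new point is that the linearized operator $D\Upsilon_{w;\CE}$, although not a pure Cauchy--Riemann operator, is for orientation purposes a Cauchy--Riemann operator up to canonically oriented data. Indeed, by Theorem \ref{thm:linearization} and the splitting $TM = \xi\oplus\R\langle R_\lambda\rangle$, the operator decomposes, modulo a zeroth-order (compact) perturbation, as $D\Upsilon^\pi\oplus D\Upsilon^\perp$: here $D\Upsilon^\pi$ is a genuine $\delbar$-operator on $w^*\xi$ with Lagrangian boundary conditions in $(\xi,d\lambda)$ along $\del\dot\Sigma$, while $D\Upsilon^\perp$, restricted to the Dirichlet space $\lambda(Y)|_{\del\dot\Sigma}=0$, is conjugate to the Dirichlet Laplacian and hence an isomorphism carrying a canonical orientation. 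Thus $\det D\Upsilon_{w;\CE}\cong\det D\Upsilon^\pi_{w;\CE}$ canonically, and everything reduces to the orientation theory of Cauchy--Riemann operators with relatively spin Lagrangian boundary data.

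First I would arrange, as already done in \eqref{eq:linearCR}, that $w$ be constant on the half-strips near each puncture; this is harmless since the determinant line is invariant along the contractible family of such deformations. Next I would apply the linear gluing theory for determinant lines --- index additivity and determinant multiplicativity under neck-stretching --- to obtain an isomorphism
$$
\det D\Upsilon^\pi_{w;\CE}\;\cong\;\det D^{\mathrm{poly}}_{B}\;\otimes\;\bigotimes_{i}\bigl(\det D^{\mathrm{cap}}_{\gamma_{i(i+1)}}\bigr)^{\varepsilon_i},
$$
where each $D^{\mathrm{cap}}_{\gamma_{ij}}$ is the cap operator over $Z_\pm$ asymptotic to $\gamma_{ij}$ whose determinant line is oriented by the chosen $o_{\gamma_{ij}}$ of Theorem \ref{thm:1-puncture-ori}, and $D^{\mathrm{poly}}_B$ is a $\delbar$-operator on the closed disc whose boundary Lagrangian loop is the loop $\widetilde\alpha_w$ built from the Gauss maps $TR_i|_{w(\cdot)}$ along the boundary arcs concatenated with the fixed gradings $\alpha_{ij}$ along the bridges $\ell_{ij}$ (cf.\ Definition \ref{defn:polygonal}). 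Part (1) follows: the relative spin structure, with the fixed auxiliary bundle $V$ and $w_2(V)=\mathrm{st}$, stably trivializes $V\oplus TR_i$ over the $2$-skeletons, which is precisely the condition guaranteeing that the index bundle of $D^{\mathrm{poly}}_B$ over the space of such $w$ in class $B$ is orientable with a preferred orientation; the cap factors contribute the fixed $o_{\gamma_{ij}}$, and since $\CF(\CE;\vec\gamma;B)$ is connected for fixed $B$, orientability upgrades to triviality.

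For part (2) I would \emph{define} $o_{k+1}(\vec\gamma;\CE;B)$ by inserting into the displayed isomorphism the preferred orientation of $\det D^{\mathrm{poly}}_B$ together with the $o_{\gamma_{ij}}$. The substantive remaining task --- routine given the FOOO template --- is coherence: when a polygon degenerates into two polygons glued along an interior Reeb chord, the induced boundary orientation must equal the product of the two lower-stratum orientations, the analogue of the gluing relation \eqref{eq:ori-gluing}. This follows from associativity of linear gluing of determinant lines together with the compatibility of the $o_\gamma$ under strip gluing already recorded in Theorem \ref{thm:1-puncture-ori}; the only delicate points are the Koszul signs incurred in commuting cap factors past one another --- bookkeeping identical to \cite[Section 6]{fooo:anchored} --- and the compatibility of the ``constant near punctures'' deformation with these gluings. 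I expect the real obstacle to be not the sign bookkeeping but a clean verification that the contact-instanton linear gluing theory, including the fact that the $R_\lambda$-direction contributes a canonically oriented invertible factor preserved under all the relevant gluings, holds at the needed generality; for this I would invoke the analytical package of \cite{oh:contacton-gluing} and \cite{oh:contacton-transversality} rather than redevelop it.
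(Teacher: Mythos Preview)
Your proposal is correct and follows essentially the same route as the paper's proof: glue the polygonal operator $D\Upsilon_{(\CE;\vec\gamma;B)}$ with the cap operators $D\Upsilon_{([\gamma_{i(i+1)},w_{i(i+1)}];\lambda_{i(i+1)})}$ at each puncture to obtain a closed-disc operator whose index bundle carries a canonical orientation by the relative spin hypothesis (the paper cites \cite[Lemma 3.7.69]{fooo:book2} for this), then peel off the cap orientations supplied by Theorem \ref{thm:1-puncture-ori} to orient the polygon. The one point you make explicit that the paper leaves implicit is the reduction of $\det D\Upsilon$ to $\det D\Upsilon^\pi$ via the canonical orientation of the Reeb-direction Dirichlet factor; this is a genuine clarification, since the cited FOOO lemma applies to Cauchy--Riemann operators with Lagrangian boundary data, and your observation is exactly what justifies invoking it here.
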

\begin{proof}
Let $w_{i(i+1)} :[0,1]^2 \to M$  be the map given as in \eqref{eq:condw},
and we consider the operator
$D\Upsilon_{([\gamma_{(i+1)i},w_{i(i+1)}];\lambda_{i(i+1)})}$. We glue it
with $ D\Upsilon_{(\CE;\vec \gamma;B)}$ at $U_{i+1}$ (The boundary condition \eqref{eq:linearCR}
enables us to glue the boundary condition.) After gluing all
of $D\Upsilon_{([\gamma_{i(i+1)},w_{i(i+1)}];\lambda_{i(i+1)})}$ we have
an index bundle of a Fredholm operator
\be\label{summedupindex}
D\Upsilon_{(\CE;\vec \gamma;B)} \#
\left(\sum_{i=0}^{k-1} D\Upsilon_{([\gamma_{i(i+1)},w_{i(i+1)}];\lambda_{i(i+1)})}\right)
\# D\Upsilon_{([\gamma_{k0},w^+_{k0};\lambda_{0k}])}.
\ee
By \cite[Lemma 3.7.69]{fooo:book2}, the index bundle of (\ref{summedupindex})
has canonical orientation.
On the other hand, the index virtual vector spaces
$$
\text{\rm Index}\, D\Upsilon_{([\gamma_{i(i+1)},w_{i(i+1)}];\lambda_{i(i+1)})}
$$
are oriented by Theorem \ref{thm:1-puncture-ori}.
Theorem \ref{thm:fooo-ori2} follows.
\end{proof}

We can prove that the orientation of $\text{\rm Index } D\Upsilon_{(\CE;\vec \gamma;B)}$
depends on the choice of $o_{\gamma_{i(i+1)}}$ (and so on $\lambda_p$)
with $i=0,\cdots,k-1$ but is independent of the choice of $w_{i(i+1)}$ etc.
Therefore the orientation in Theorem \ref{thm:fooo-ori2} is independent of the choice of
base paths.
(See  \cite[Remark 8.1.15 (3)]{fooo:book2} for elaboration of this point.)

Now we consider the system of maps
$$
o_k: \pi_k(\mathfrak \CE) \to \Z_2, \quad k \geq 1
$$
with $o_1(\alpha) = \det D\Upsilon_{[\ell,\gamma]}$ as an element of
the set of orientations.
We mentioned that the set of orientations is \emph{non-canonically} isomorphic to $\Z_2$.

\begin{cor} \label{cor:fooo-ori2}
Suppose $\CE = (R_0,\cdots, R_k)$ is a relatively spin Legendrian link.
Then the system $\mathfrak{Or}: = \{o_k\}_{k=1}$ defines an abstract index.
\end{cor}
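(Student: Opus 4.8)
The plan is to take for the codomain monoid $(\K,\circ)$ the \emph{orientation monoid}: its underlying set is (non-canonically) $\Z_2$, namely the set of orientations of stably trivial index bundles, and its product $\circ$ is the concatenation $\#$ induced by the linear gluing of Fredholm operators, with unit the canonical orientation of the zero operator. Associativity of $\circ=\#$ and the unit axiom are the standard properties of the gluing of determinant lines established in \cite[Chapter 8]{fooo:book2} (in particular \cite[Lemma 3.7.69]{fooo:book2}); once the sign conventions there are fixed once and for all, $(\K,\#)$ is a genuine monoid. With this codomain I set $o_1(\alpha)$ to be the orientation of $\det D\Upsilon_{[\ell,\gamma]}$ attached to $\alpha\in\pi_2(\ell;\gamma)$ by Theorem \ref{thm:1-puncture-ori}, and $o_{k+1}(B):=o_{k+1}(\vec\gamma;\CE;B)$ the orientation produced by Theorem \ref{thm:fooo-ori2} for $B\in\pi_2(\CE;\vec\gamma)$.

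First I would check that $o_k$ is well defined on homotopy classes. By Theorem \ref{thm:1-puncture-ori}(1) and Theorem \ref{thm:fooo-ori2}(1) the determinant line bundles \eqref{eq:detB1} and \eqref{eq:detB} are trivial over the relevant off-shell spaces $\CF(\gamma,\gamma';B)$ and $\CF(\CE;\vec\gamma;B)$; since a fixed homotopy class gives a path-connected off-shell space, an orientation chosen at one representative extends to a locally constant section, so $o_1$ and $o_{k+1}$ depend only on the class, not on the chosen maps $w_{i(i+1)}$, $w$, and each is indeed a map out of $\pi_2$.

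Next comes the gluing rule of Definition \ref{defn:abstract-index}. Given $\alpha_i\in\pi_2(\ell_{i(i+1)};\gamma_{i(i+1)})$ represented by $w_{i(i+1)}$ and $B\in\pi_2(\CE;\vec\gamma)$ represented by $w$, I would form, exactly as in the proof of Theorem \ref{thm:fooo-ori2} and in \eqref{summedupindex}, the glued Fredholm operator obtained by inserting the $i$-th one-puncture operator $D\Upsilon_{([\gamma_{i(i+1)},w_{i(i+1)}];\lambda_{i(i+1)})}$ into the $i$-th strip-like end of $D\Upsilon_{(\CE;\vec\gamma;B)}$, the linearized Legendrian boundary conditions \eqref{eq:linearCR} matching along the seams (with the pentagonal correction path $\alpha^-$ of Remark \ref{rem:pentagon} absorbed into the grading data, as already done there). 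The glued domain is again a disc with one puncture whose boundary arc carries $\ell_{0k}$, so the glued operator is canonically homotopic to $D\Upsilon_{[\gamma_{0k},w_{0k}]}$ for $w_{0k}=([w_{01}]\#\cdots\#[w_{(k-1)k}])\#B$ in the sense of \eqref{eq:gluing-rule}. By the coherence of the gluing of orientations (\cite[Lemma 3.7.69]{fooo:book2}) the orientation of the glued index is the $\#$-product of the orientations of the constituents, which is precisely the identity
$$
\circ_{i=0}^{k-1}\bigl(o_1([w_{i(i+1)}])\bigr)\circ o_{k+1}(B)=o_1\bigl(([w_{01}]\#\cdots\#[w_{(k-1)k}])\#B\bigr)
$$
required of an abstract index. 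Hence $\mathfrak{Or}=\{o_k\}$ is an abstract index over $\CE$.

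The main obstacle is the coherence bookkeeping in this last step: one must verify that the linear gluing of determinant lines is strictly associative, compatible with iterating the insertions in any order and with the auxiliary closing-up gluing $\#\,D\Upsilon_{[\gamma_{k0},w^+_{k0};\lambda_{0k}]}$ of \eqref{summedupindex}, and that the resulting conventions agree with those used to orient the moduli spaces $\CM(\CE;\vec\gamma;B)$ elsewhere. This is exactly the orientation calculus of \cite[Chapter 8]{fooo:book2}; it transports to the present setting with only notational changes because, by Theorem \ref{thm:linearization}, the operator $D\Upsilon$ splits into a $w^*\xi$-part that is a Cauchy--Riemann type operator with Lagrangian boundary conditions and a Laplacian-type $R_\lambda$-part with canonically trivial determinant, so the whole orientation theory is governed by the $\xi$-part just as in the Lagrangian case. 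Carefully checking this reduction, rather than any new geometric input, is where the work lies.
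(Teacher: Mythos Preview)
Your proposal is correct and follows essentially the same approach as the paper, which treats the corollary as an immediate consequence of Theorem~\ref{thm:fooo-ori2} and the preceding discussion without giving a separate proof. Your write-up fleshes out the implicit argument: well-definedness on homotopy classes from the triviality statements in Theorems~\ref{thm:1-puncture-ori}(1) and~\ref{thm:fooo-ori2}(1), and the gluing rule from the glued operator \eqref{summedupindex} together with the coherence of determinant-line gluing from \cite[Chapter~8]{fooo:book2}; your remark that the $R_\lambda$-part of $D\Upsilon$ has canonically trivial determinant, reducing the orientation calculus to the $\xi$-part, is a helpful clarification the paper leaves implicit.
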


\begin{rem}\label{fooo8-5}
In order to give an orientation of $\CM(\CE;\vec \gamma;B)$, we have to take the moduli
parameters of marked points and the action of the automorphism group into account.
See \cite[Section 8.3]{fooo:book2} for detailed explanation.
\end{rem}

\section{Construction of filtered Legendrian CI Fukaya category}
\label{sec:filtered-fukaya}

In this section we construct a curved $A_\infty$ category of a tame contact manifold
such that its objects consist of Legendrian submanifolds with suitable decorations and
its structure maps are given by the structure maps ${\mathfrak m}_k$ for $k \geq 0$
that satisfy the $A_\infty$ relations.
We start with  recalling the definition of the chain module
$$
C(R,R';\ell;\K[q,q^{-1}) \cong \bigoplus_{\gamma \in \mathfrak{X}(R,R')} 
\K[q,q^{-1}] \langle \gamma \rangle
$$
from \eqref{eq:CRR'ell}.

 The following is an immediate consequence of 
the definition and the compactness theorem
of the moduli space of contact instantons \cite{oh:entanglement1}.

\begin{lem} Let $J$ be a compatible CR almost complex structure of $(M,\lambda)$,
and $(R,R')$ be a pair of compact Legendrian submanifolds. 
\begin{enumerate}
\item  $\Gamma_{RR'} \subset  2\Z$ is  a \emph{$(\Gamma,\Gamma')$-set} where 
$\Gamma = \Gamma(R,J), \, \Gamma'= \Gamma_{R',J}$.
\item The structure maps  $\mathfrak n = \{{\mathfrak n}_{k_0,k_1}\}$ are $\Gamma_{RR'}$-gapped.
\end{enumerate}
\end{lem}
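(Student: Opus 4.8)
The plan is to verify the two assertions directly from the definitions of $(\Gamma,\Gamma')$-set and of gappedness, using the compactness theorem for moduli spaces of contact instantons from \cite{oh:entanglement1} as the essential input. First I would recall that $\Gamma_{RR'}$ was defined (Definition \ref{defn:GG'-set}) as the set of sums $\beta + \alpha + \beta'$ with $\beta \in \Gamma_R$, $\beta' \in \Gamma_{R'}$, $\alpha \in \Gamma_{RR',0}$, where $\Gamma_{RR',0}$ records the Maslov indices $\mu(\gamma,\gamma';B)$ of strip-like contact instanton moduli spaces between Reeb chords $\gamma,\gamma' \in \mathfrak{X}(R,R')$. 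For part (1), the claim $\Gamma_{RR'} \subset 2\Z$ reduces to showing each summand lies in $2\Z$: the groups $\Gamma_R$, $\Gamma_{R'}$ are the Maslov period groups of closed loops, and since $R$, $R'$ are oriented (a standing assumption for relatively spin Legendrians) the associated Maslov numbers of annuli/discs are even by the usual evenness of Maslov indices of loops of Lagrangian subspaces obtained from oriented boundary conditions; for $\Gamma_{RR',0}$ one invokes the parity computation of the polygonal Maslov index together with Lemma \ref{lem:dim-deg} (the relation $\mu + \mu^+ = n$ forces the relevant parities). Then the $(\Gamma,\Gamma')$-set structure — closure under the monoid action of $\Gamma = \Gamma(R,J)$ on the left and $\Gamma' = \Gamma_{R',J}$ on the right, compatibly with concatenation — is exactly the content of the gluing rule for the index bundles established in Theorem \ref{thm:1-puncture-ori} and Theorem \ref{thm:fooo-ori2}, so this part is formal once the evenness is in hand.

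For part (2), gappedness of the structure maps $\mathfrak{n} = \{\mathfrak{n}_{k_0,k_1}\}$ means: the set of homotopy classes $B$ contributing nonzero terms has discrete, bounded-below energy, and the energies accumulate only at $+\infty$, with the $\Gamma_{RR'}$-action reorganizing contributions by shifts in the formal parameter $q$. The key step here is that, by Proposition \ref{prop:proper-energy} and the energy identities of \cite[Proposition~9.2, Lemma~9.3]{oh:contacton} recalled in Section \ref{sec:offshellenergy}, the total energy $E(w)$ of any contributing contact instanton is a fixed linear combination of the periods $\int \gamma^*\lambda$ of its asymptotic Reeb chords — in particular it depends only on the asymptotics and not on the homotopy class $B$, since $\lambda|_R = 0$ makes the action single-valued. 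Thus the energies of elements in a fixed moduli space $\CM_{k_0,k_1}(\ldots;B)$ are constant in $B$, and the compactness theorem of \cite{oh:entanglement1} guarantees that for each energy level only finitely many $B$ (modulo the $\Gamma_{RR'}$-action) contribute; this is precisely $\Gamma_{RR'}$-gappedness. I would phrase this by checking the three defining clauses of a gapped filtered $A_\infty$ bimodule structure in turn, each time pointing to the relevant compactness/energy statement.

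The main obstacle I expect is the parity bookkeeping in part (1): ensuring that \emph{all} three summands defining $\Gamma_{RR'}$ are even requires being careful about the conventions for the Maslov index of the symplectic bundle pair over $[0,1]^2$ (Definition \ref{defn:grading-chord}) versus over an annulus or a disc, and about how the grading section $\alpha$ enters — the relation in Remark \ref{rem:GammaRR'} shows $\Gamma_{RR'}$ a priori lands in $\Z$, and promoting this to $2\Z$ is where the orientability of the Legendrians and the structure of the asymptotic operators must be used. The energy argument in part (2), by contrast, is essentially a transcription of the closed-string discussion of \cite{oh:contacton} to the Legendrian boundary setting, which the paper has already flagged as going through without change; so I would keep that part brief and concentrate the exposition on the parity claim and on stating precisely which compactness statement from \cite{oh:entanglement1} supplies the discreteness of the energy spectrum.
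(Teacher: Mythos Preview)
The paper does not give a proof of this lemma; the sentence immediately preceding it states only that it is ``an immediate consequence of the definition and the compactness theorem of the moduli space of contact instantons \cite{oh:entanglement1}.'' Your proposal is therefore not so much an alternative to the paper's argument as an attempt to supply the details the paper omits.

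Your outline for part~(2) matches the paper's hint exactly: gappedness comes from the energy identities (single-valuedness of the action, since $\lambda|_R = 0$) together with the compactness theorem from \cite{oh:entanglement1}. This is the intended argument and there is nothing to add.

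For part~(1), you may be working harder than necessary on the parity claim. Look again at Definition~\ref{defn:GG'-set}: the clause ``$\Gamma_{RR',0} \subset 2\Z$'' is written into the definition itself (modulo a stray parenthesis), so in the paper's logic the inclusion $\Gamma_{RR'} \subset 2\Z$ is essentially definitional rather than something to be proved from orientability and Lemma~\ref{lem:dim-deg}. The substantive content of part~(1) is then just the closure of $\Gamma_{RR'}$ under the left and right monoid actions of $\Gamma_R$ and $\Gamma_{R'}$, which, as you say, is the gluing rule for homotopy classes (disc-bubbling on either Legendrian boundary component concatenates with a strip class). Your invocation of Theorems~\ref{thm:1-puncture-ori} and~\ref{thm:fooo-ori2} is overkill here---those concern orientations of determinant bundles, whereas the $(\Gamma,\Gamma')$-set structure is purely at the level of homotopy classes and Maslov numbers, and follows directly from the additivity of $\mu$ under the concatenation map~\eqref{eq:pi2pell0}. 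That said, your instinct to justify the evenness via orientability is the right one if one wants an honest proof rather than a definitional fiat, and the argument you sketch (Maslov indices of loops of oriented Lagrangian subspaces are even) is the standard one; it is simply not what the paper does.
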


The following structure theorem is the main consequence which is
 an algebraic translation of the boundary 
structure of the compactified moduli spaces
$$
\overline \CM_{k_0,k_1} (R,R';[w,\gamma],[w',\gamma']), \quad k_0, \, k_1 \geq 0
$$
especially for those of dimension 1.

\begin{thm}[$A_\infty$ bimodule structure]\label{thm:bimodule}
Let $(R,R')$ be a nondegenerate relatively
spin pair of Legencrianl submanifolds.
We have a left $(C(R;\K[q,q^{-1}]),{\mathfrak m})$
and right $( C(R';\K[q,q^{-1}]),\mathfrak
m^{\prime})$ filtered $A_{\infty}$ bimodule structure on
$C(R,R';\K[q,q^{-1}])$.
\end{thm}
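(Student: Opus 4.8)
The plan is to follow the blueprint of \cite{fooo:book1,fooo:book2} for Lagrangian Floer bimodules, transported to the contact-instanton setting via the analytic package already cited. First I would define, for $k_0,k_1 \geq 0$, the moduli space
$$
\CM_{k_0,k_1}(R,R';[w,\gamma],[w',\gamma']; \vec b, \vec b')
$$
of contact instantons $u:\dot\Sigma\to M$ on a disc with $k_0+k_1+2$ boundary punctures: one incoming and one outgoing puncture asymptotic to trans-chords $\gamma,\gamma'\in\mathfrak{X}(R,R')$ (with bounding squares $w,w'$ against the chosen graded bridge $\ell$), $k_0$ punctures along the $R$-arcs asymptotic to self-chords $\vec b = (b_1,\dots,b_{k_0})$ of $R$, and $k_1$ punctures along the $R'$-arcs asymptotic to self-chords $\vec b' = (b'_1,\dots,b'_{k_1})$ of $R'$, the boundary arcs being mapped alternately into $R$ and $R'$. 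The off-shell framework (Definition \ref{defn:Banach-manifold}), the exponential decay (Proposition \ref{prop:exp-decay}), the Fredholm theory and index formula (Theorem \ref{thm:index-formula}, Corollary \ref{cor:disc-index}), and generic transversality are all quoted from \cite{oh:entanglement1,oh:contacton-transversality,oh:contacton-gluing}. Counting the rigid ($0$-dimensional) components with the coherent orientations of Theorem \ref{thm:fooo-ori2} and Corollary \ref{cor:fooo-ori2}, and weighting each by $q^{\mu(B)}$, defines the structure maps $\mathfrak n_{k_0,k_1}$ on $C(R,R';\ell;\K[q,q^{-1}])$.

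Second, I would verify the filtered and $\Gamma_{RR'}$-gapped property: by Proposition \ref{prop:proper-energy} and the energy--action identity (Lemma 9.3 of \cite{oh:contacton}), the total energy of $u$ equals twice the sum of the actions of its outgoing chords minus that of its incoming chords, so off the diagonal the action of $\gamma'$ strictly exceeds that of $\gamma$ (the inputs $\vec b,\vec b'$ having nonnegative action); hence $\mathfrak n_{k_0,k_1}$ respects, and strictly raises off-diagonal, the energy filtration of Definition \ref{defn:action-filtration}, and gappedness follows as in the cited lemma since the Maslov period set controls the exponents of $q$. Third, the $A_\infty$ bimodule relations
$$
\sum \mathfrak n \circ (\id\otimes\cdots\otimes\mathfrak m\otimes\cdots\otimes\id) \ +\ \sum \mathfrak n\circ(\id\otimes\cdots\otimes\mathfrak n\otimes\cdots\otimes\id) \ +\ \sum \mathfrak n\circ(\cdots\otimes\mathfrak m') \ =\ 0
$$
are read off from the description of the codimension-one boundary of the $1$-dimensional $\overline\CM_{k_0,k_1}$ furnished by the compactness theorem of \cite{oh:entanglement1}: the boundary decomposes into (i) two-level configurations where a strip with two trans-chord ends breaks off, giving $\mathfrak n\circ\mathfrak n$; (ii) configurations where a disc on the $R$-side with one trans-chord end and several self-chord ends splits off, giving $\mathfrak n\circ(\mathfrak m\otimes\cdots)$ acting on the left; (iii) the mirror configurations on the $R'$-side giving $\mathfrak n\circ(\cdots\otimes\mathfrak m')$ on the right. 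Matching these against the gluing sign rule \eqref{eq:ori-gluing} yields the signed identity, and that $\mathfrak m,\mathfrak m'$ are themselves $A_\infty$ algebras is Proposition \ref{prop:m2=0}.

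The main obstacle I expect is twofold. Analytically, one must confirm that the Gromov--Floer--Hofer compactification of these two-trans-chord moduli spaces has \emph{exactly} the three boundary families above: that under the tameness hypotheses of Section \ref{sec:tameness} no sphere bubbling or interior Reeb-orbit breaking occurs, and that boundary disc bubbling off $R$ (resp.\ $R'$) is accounted for precisely by the $\mathfrak m$-terms (resp.\ $\mathfrak m'$-terms)—this is where the quoted compactness and gluing theorems do the real work, and I would only need to assemble them. Combinatorially, the delicate point is the bookkeeping of orientations at the two corner punctures: one must check that the coherent orientations of Corollary \ref{cor:fooo-ori2}, organized as an abstract index in the sense of Definition \ref{defn:abstract-index}, produce the Koszul signs required by the filtered $A_\infty$ bimodule axioms, which I would handle by the same determinant-line gluing argument as in \cite[Chapter 8]{fooo:book2} and \cite[Section 6]{fooo:anchored}, now with $w^*\xi$ in place of $u^*TX$. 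The remaining points—strict unitality via the $\mathfrak m_0$/short-chord contribution of $C^*(R), C^*(R')$, and independence of the auxiliary data $(J,j,h,\ell)$—are routine continuation arguments.
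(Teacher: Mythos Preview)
Your proposal is correct and follows precisely the approach the paper indicates: the paper gives no detailed proof of this theorem, stating only that it is ``an algebraic translation of the boundary structure of the compactified moduli spaces $\overline\CM_{k_0,k_1}(R,R';[w,\gamma],[w',\gamma'])$ \ldots\ especially for those of dimension 1,'' and otherwise defers to the FOOO machinery and the analytic package you cite. One small slip in your write-up: in the type-(ii) boundary strata the disc that bubbles off on the $R$-side carries \emph{only} self-chord ends of $R$ (that is exactly what makes it an $\mathfrak m$-contribution rather than an $\mathfrak n$-contribution); the trans-chord ends stay on the main component.
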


Now we let $k \geq 2$ and consider the case of Legendrian link
$$
\CE = (R_0, R_1, \cdots, R_k)
$$
which is a
chain of (compact) Legendrian submanifolds in $(M,\omega)$
that intersect pairwise transversely without triple intersections.

\par
Let $\vec z = (z_{01},z_{12},\cdots,z_{(k-1)k})$ be a set of distinct points on $\partial D^2
= \{ z\in \C \mid \vert z\vert = 1\}$. We assume that
they respect the counter-clockwise cyclic order of $\partial D^2$.
The group $PSL(2;\R)\cong \operatorname{Aut}(D^2)$ acts on the set
in an obvious way. We denote by $\mathcal M_{k+1}$ be
the set of $PSL(2;\R)$-orbits of $(D^2,\vec z)$.
Recall that there is no automorphism on the domain $(D^2, \vec z)$, i.e.,
$PSL(2;\R)$ acts freely on the set of such $(D^2, \vec z)$'s, when $k \geq 2$.

Let
$\gamma_{(j-1)j} \in \mathfrak{X}(R^{j-1},R^j)$
($j = 0,\cdots k$), be a set of intersection points.
\par
We consider the pair $(w;\vec z)$ where $w: D^2 \to M$ is a
pseudoholomorphic map that satisfies the boundary condition
\bea\label{54.15}
&{}& w(\overline{z_{(j-1)j}z_{j(j+1)}}) \subset R_j, \nonumber \\
&{}& w((\infty,t)_j) = \gamma_{j(j+1)}(t) \in \mathfrak{X}(R_j,R_{j+1}).
\eea
We denote by $\widetilde{\CM}(\CE, \vec \gamma)$
the set of such
$((D^2,\vec z),w)$.
\par
We identify two elements $((D^2,\vec z),w)$, $((D^2,\vec z'),w')$
if there exists $\psi \in PSL(2;\R)$ such that
$w \circ \psi = w'$ and $\psi(z'_{j(j-1)}) = z_{j(j-1)}$.
Let ${\CM}(\CE, \vec \gamma)$ be the set of equivalence classes.
We compactify it by including the configurations with disc or sphere bubbles
attached, and denote it by $\overline{\CM}(\CE, \vec \gamma)$.
Its element is denoted by $((\Sigma,\vec z),w)$ where
$\Sigma$ is a genus zero bordered Riemann surface with one boundary
components, $\vec z$ are boundary marked points, and
$w : (\Sigma,\partial\Sigma) \to (M,L)$ is a bordered stable map.
\par
We can decompose $\overline \CM(\CE, \vec \gamma)$ according to the homotopy
class $B \in \pi_2(\CE,\vec \gamma)$ of continuous maps satisfying
\eqref{54.15} into the union
$$
\overline \CM(\CE, \vec \gamma) = \bigcup_{B \in \pi_2(R;\vec \gamma)}
\overline \CM(\CE, \vec \gamma;B).
$$

\begin{thm}[Section 11.3, \cite{oh-yso:index}]\label{thm:dimension} 

Let $\CE = (R_0,\cdots,R_k)$ be a Legendrian link in a contact manifold $(Y,\xi)$ of
dimenison $2n+1$, and $B \in \pi_2(R;\vec \gamma)$.
Then $\CM(\CE, \vec \gamma;B)$ has its (virtual) dimension satisfies
\be\label{dimensionformula}
\dim \CM(\CE, \vec \gamma;B) = \mu(\CE,\vec \gamma;B) + n + k-2,
\ee
where $\mu(\CE,\vec \gamma;B)$ is the polygonal Maslov index of $B$.
\end{thm}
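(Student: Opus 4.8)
The plan is to reduce the index computation for a $(k{+}1)$-punctured disc to the already-established one-punctured-square computation via a gluing/excision argument, exactly in the spirit of Theorem~\ref{thm:index-formula} combined with Lemma~\ref{thm:poly}. First I would normalize the representative $w$ in its homotopy class $B \in \pi_2(\CE;\vec\gamma)$ so that, with respect to the chosen strip-like coordinates $\varphi_i$ near each puncture $z_{i(i+1)}$, the map $w$ is constant (equal to the asymptotic Reeb chord $\gamma_{(i-1)i}$) on the far part $\varphi_i^{-1}((-\infty,-1]\times[0,1])$; this makes the linearized operator $D\Upsilon_{w;\CE}$ of \eqref{delbawl} into one that is translation-invariant near each end, so the standard Fredholm theory for bordered punctured surfaces (from \cite{oh:contacton-transversality}, \cite{oh-yso:index}) applies and the analytic index $\Index D\Upsilon(u)$ is well defined and depends only on $B$.

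The core of the argument is the gluing identity already used in the proof of Theorem~\ref{thm:fooo-ori2}: glue to $D\Upsilon_{(\CE;\vec\gamma;B)}$ the $k$ half-plane operators $D\Upsilon_{([\gamma_{i(i+1)},w_{i(i+1)}];\lambda_{i(i+1)})}$ at the positive punctures $z_{i(i+1)}$, $i=0,\dots,k-1$, together with the time-reversed cap $D\Upsilon_{([\gamma_{k0},w^+_{k0}];\lambda_{0k})}$ at the remaining puncture. The glued domain is a disc with \emph{no} punctures, so its index operator is the one computing the Maslov index $\mu(E_{u;(\vec\ell,\vec w)},\alpha_{u;(\vec\ell,\vec w)})$ of the associated symplectic bundle pair over $(D^2,\del D^2)$ of rank $2n$, plus the $R_\lambda$-direction contribution. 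By additivity of the Fredholm index under gluing along the strip ends,
\[
\Index D\Upsilon_{\mathrm{glued}} \;=\; \Index D\Upsilon(u) \;+\; \sum_{i=0}^{k-1}\Index D\Upsilon_{([\gamma_{i(i+1)},w_{i(i+1)}];\lambda_{i(i+1)})} \;+\; \Index D\Upsilon_{([\gamma_{k0},w^+_{k0}];\lambda_{0k})}.
\]
Each one-puncture square index equals $\mu([\gamma_{i(i+1)},w_{i(i+1)}];\alpha_{i(i+1)})$ (Definition~\ref{defn:grading-chord}), and the glued closed-disc index, after separating the $\C^n$-part from the trivial $\R\langle R_\lambda\rangle$-part and invoking $\chi(D^2)=1$, contributes $\mu(\text{total bundle pair}) + n\cdot\chi(D^2) = \mu + n$; combining with Lemma~\ref{thm:poly} (which re-expresses the total Maslov index of the glued loop as $\mu(\CE,\vec\gamma;B) + \sum_i \mu([\gamma_{i(i+1)},w_{i(i+1)}];\lambda_{i(i+1)})$ together with the cap via Lemma~\ref{lem:dim-deg}) yields
\[
\Index D\Upsilon(u) \;=\; \mu(\CE,\vec\gamma;B) + n.
\]
Finally, to pass from the analytic index to the virtual dimension of $\CM(\CE,\vec\gamma;B)$ one adds the dimension of the space of domain moduli: for $(g,h)=(0,1)$ with $m=k+1$ marked points and $k\ge 2$, formula \eqref{eq:dim-CMghk} gives $\dim\CM_{(0;1),k+1} = (k+1)-3 = k-2$, so $\dim\CM(\CE,\vec\gamma;B) = \Index D\Upsilon(u) + (k-2) = \mu(\CE,\vec\gamma;B) + n + k - 2$, which is \eqref{dimensionformula}.

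The main obstacle I expect is bookkeeping the \emph{boundary Lagrangian conditions} consistently through the gluing: one must check that the Gauss-map paths $\tau\mapsto TR_i$ along the arcs $\overline{z_{(i-1)i}z_{i(i+1)}}$ of $\del\dot\Sigma$ match, at each puncture, with the square-boundary paths $TR_i|_{w_{i(i+1)}(\cdot,0)}$ and $TR_{i+1}|_{w_{i(i+1)}(\cdot,1)}$ appearing in \eqref{eq:tilde-alpha}, so that after gluing one genuinely obtains a \emph{closed} Lagrangian loop over $\del D^2$ whose Maslov index is $\mu(\CE,\vec\gamma;B)$ as defined in Definition~\ref{defn:polygonal}. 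This is precisely the point where the `pentagon' subtlety of Remark~\ref{rem:pentagon} and the transversality Hypothesis~\ref{hypo:general-position} (no triple overlaps) are needed, and where one must be careful that the diagonal trivialization $\Phi$ of Definition~\ref{defn:Phi} is chosen compatibly near all ends; the interior contribution $n\chi(D^2)$ and the stability/automorphism count are then routine given \cite{oh-yso:index}.
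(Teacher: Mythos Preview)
Your proposal is correct and follows essentially the same route as the paper: the paper does not give a self-contained proof of this theorem (it cites \cite{oh-yso:index}), but the ingredients it records---the general gluing index formula of Theorem~\ref{thm:index-formula}, the domain-moduli count \eqref{eq:dim-CMghk} specialized to $(g,h)=(0,1)$ with $m=k+1$, and the polygonal decomposition Lemma~\ref{thm:poly}---are exactly the ones you invoke, in the same order and with the same logic (glue caps at all punctures, identify the closed-disc index as $\mu+n\chi(D^2)$, then add $k-2$). Your identification of the one delicate point, namely matching the boundary Lagrangian paths across the glued ends so that the resulting loop over $\del D^2$ genuinely has Maslov index $\mu(\CE,\vec\gamma;B)$, is also the right place to focus attention.
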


We next take graded base paths $(\ell_i,\lambda_i)$ to each $R_i$ for $i = 1, \cdots, k-1$
and assign the base path $\ell_{0k}$ to the pair $(R_0,R_k)$.

Using the case $\dim \CM(\CE, \vec \gamma;B) = 0$ and recalling the notation
\eqref{eq:CIRR'ell} of $CI(R,R';\ell)$ in general, we define the $k$-linear operator
\beastar
\mathfrak m_{k,B}:CI(R_0,R_{1};\ell_{01}) & \otimes &\cdots\otimes
CI((R_{k-1}R_{k};\ell_{(k-1)k}) \\
& \longrightarrow & CI(R_0,R_k; \ell_{0k})
\eeastar
by
\be\label{catAinifwob}
\aligned
\mathfrak m_{k,B}([\gamma_{01},w_{01}]),& [\gamma_{12},w_{12}],\cdots, [\gamma_{(k-1)k},w_{(k-1)k}]) \\
&= \sum \#(\CM_{k+1}(\CE;\vec \gamma;B)) \, [\gamma_{0k},w_{0k}])
\endaligned
\ee
when  $\ell_{0k} =  (\sum_{i=0}^{k-1} \ell_{i(i+1)}) \# B$.
Here the sum is over the basis $[\gamma_{0k},w_{0k}]$ of
$CI(R_k,R_0)$, where  $B \in \pi_2(\CE;{\bf p})$ and $
\vec \gamma = (\gamma_{01},\gamma_{12}, \ldots, \gamma_{(k-1)k},\cdots,\gamma_{k0})$ with $\gamma_{k0} = \gamma_{0k}$.
The formula (\ref{misdeg1}) implies that $\mathfrak m_{k,B}$ above is a $\K$-linear map
over the given coefficient ring (e.g., $\K = \R, \, \C$).

By the procedure similar to the case of `boundary map' $\delta = \mathfrak m_1$
in Theorem {thm:bimodules}, we can rewrite the equation into an $R$-linear\
map $\mathfrak m_{k,B}$ given by
$$
\mathfrak m_{k,B}(\langle \gamma_{01} \rangle, \ldots, \langle \gamma_{(k-1)k} \rangle)
=  \sum_{\gamma_{0k} \in R_0 \cap R_k}  \#(\CM_{k+1}(\CE;\vec \gamma;B)) \, \langle \gamma_{0k} \rangle
$$
linearly extended over $\Lambda_{0,nov}$, where we simplify the notation by writing
\be\label{eq:simplified-notation}
\langle \gamma_{(i-1)i} \rangle: = [\gamma_{(i-1)i},w_{(i-1)i}].
\ee
So far, we have constructed a curved filtered $A_\infty$ category
out of a collection of Legendrian submanifolds equipped with
a system of \emph{graded base paths}.

\section{Strictification}
\label{sec:strictification}

Our ultimate goal is to construct an $A_\infty$ category
as a contact invariant of  the underlying contact manifold
$(M,\omega)$. For this purpose, the \emph{strictification} is needed
so that the homology group of $(C(c_0,c_1),\mathfrak{m}_1)$
becomes invariant under the Hamiltonian isotopy. This is because
in general the operator $\mathfrak m_k$ defined in Section \ref{sec:filtered-fukaya}
does {\it not} satisfy the classical $A_{\infty}$ relation or equivalently $\frak m_0 \neq 0$
by the presence of obstructions as in the case of defining the boundary operator $\frak m_1$ satisfying
$\frak m_1^2 = 0$. We need to use bounding cochains of $R_i$ to deform $\mathfrak m_k$ in the same
way as the symplectic case in \cite{fooo:book1},  \cite{fukaya:immersed} whose explanation is now in order.
(Also see \cite[Subsection 7.1.3]{oh:book-kias} for the summary of the procedure of strictification.)

Let $m_0,\cdots,m_k \in \Z_{\ge 0}$ and $\CM_{m_0,\cdots,m_k}(\CE,
\vec \gamma;B)$ be the moduli space obtained from the set of $((D^2,\vec
z),(\vec z^{(0)},\cdots,\vec z^{(k)}),w)$ by taking the quotient by
$PSL(2,\R)$ action and then by taking the stable map
compactification as before. Here we put
$$ 
z^{(i)} = (z^{(i)}_1,\cdots,z^{(i)}_{k_i}), \quad z^{(i)}_{j} \in
\overline{z_{(i-1)i}z_{i(i+1)}}
$$ 
such that
$z_{(i-1)i},z^{(i)}_1,\cdots,z^{(i)}_{k_i}, z_{i(i+1)}$ respects the counter-clockwise
cyclic ordering. The assignment
$$
\left((D^2,\vec z),(\vec z^{(0)},\cdots,\vec z^{(k)}),w\right)
\mapsto \left(w(z^{(0)}_1),\cdots,w(z^{(k)}_{m_k})\right)
$$
induces an evaluation map:
$$
\textbf{ev} =(\ev^{(0)},\cdots,\ev^{(k)}): \CM_{m_0,\cdots,m_k}(\CE, \vec \gamma;B)
\to \prod_{i=0}^k R_i^{m_i}.
$$
Taking the evaluation map
$$
(\CM_{m_0,\cdots,m_k}(\CE, \vec \gamma;B), {\bf \ev})
$$
as a correspondence, we define the map
\beastar
&{}& \mathfrak m_{k, B;m_0,\cdots,m_k} : B_{m_0}(CI(R_0) \otimes
CI(R,R')
\otimes B_{m_1}(CI(R_1)) \cdots\\
& \otimes&  CI(R_{k-1},R_k,\gamma_{0}) \otimes B_{m_k}(CI(R_k))
\to CI(R_0,R_k)
\eeastar
again by the pull-push as before
\beastar
&{}& \mathfrak m_{k;m_0,\cdots,m_k}
\left(\vec b^{(0)},[\gamma_{01},w_{01}],\cdots,[\gamma_{0k},w_{0k}],
\vec b^{(k)}\right)
\\
& = & \sum \#(\CM_{k+1}(\CE;\vec \gamma;\vec{\vec b};B)) \, [\gamma_{0k},w_{0k}]
\eeastar
where ${\bf b}^{(i)} \in B_{m_i}(CI(R_i))$ with our choice of the chain model
 $CI(R_i) = \Omega^*(R_i)$  for the Morse-Bott  intersection $R_i \cap R_i$.
 We then define
 $$
 \mathfrak m_{k;m_0,\cdots,m_k}
 = \sum_{B} \mathfrak m_{k,B; m_0,\cdots,m_k}.
 $$
 by the same procedure as in  \cite{fooo:anchored}.

Finally summing over this collection of maps, we
define the \emph{boundary deformation} of $\mathfrak m = \{\mathfrak m_k\}_{k \geq 0}$
as follows.
For each given $b_i \in CI(R_i)[1]^0$ ($b_i \equiv 0 \mod \Lambda_+$),
$\vec b =(b_0,\cdots,b_k)$, and $x_i \in CI(R_{i-1},R_i; \ell_{(i-1)i})$, we put
\be\label{mkcorrected}
\mathfrak m_k^{\vec b}(x_1,\cdots,x_k) = \sum_{m_0,\cdots,m_k} \mathfrak
m_{k;m_0,\cdots,m_k} (b_0^{m_0},x_1, b_{2}^{m_{2}},\cdots,x_k,b_k^{m_k}).
\ee
Then we obtain the following in the same way as the symplectic analog
was proved in \cite{fooo:anchored}.
\begin{thm}
If every $b_i$ is a bounding cochain, i.e.,  satisfies Maurer-Cartan equation
 $$
 \sum_{i=0}^\infty {\mathfrak m}_k(b,\ldots, b) = 0,
 $$
 then $\mathfrak m_k^{\vec b}$ in $(\ref{mkcorrected})$ satisfies the
$A_{\infty}$ relation: $\forall$ $k\geq 1$, we have
\be\label{Ainftyrel}
\sum_{k_2 = 0}^{k}\sum_{i=0}^{k+1-k_2}
(-1)^* \mathfrak m_{k+1 - k_2}^{\vec b}\left(x_1,\cdots,x_i, \mathfrak m_{k_2}^{\vec b}(x_{i+1},\cdots,x_{i+k_2}),\cdots,x_k\right) = 0
\ee
for every $k \geq 0$.
(We write $\mathfrak m_k$ in place of $\mathfrak m^{\vec b}_k$ in $(\ref{Ainftyrel})$.)
The sign $*$ is given by
$$
* = i -1 + |x_1| +\cdots + |x_{i-1}| \equiv  |x_1|' + \cdots |x_{i-1}|'
$$
\end{thm}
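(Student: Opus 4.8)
\medskip

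\noindent\emph{Proof strategy.}
The plan is to transplant, essentially line by line, the strictification argument of \cite{fooo:anchored,fooo:book1} for Lagrangian intersection Floer theory to the present contact instanton setting, using the analytic package---Gromov--Hofer style compactness, gluing and generic transversality for bordered punctured contact instantons under the Legendrian boundary condition---recorded in Section~\ref{sec:tameness} and established in \cite{oh:entanglement1,oh:contacton-gluing,oh:contacton-transversality}, together with the coherent orientations of Theorem~\ref{thm:fooo-ori2} and Corollary~\ref{cor:fooo-ori2}. First I would record the curved $A_\infty$ relation for the undeformed family $\{\mathfrak m_k\}_{k\ge 0}$ of Section~\ref{sec:filtered-fukaya}: this is the all-$k$ version of (the proof of) Proposition~\ref{prop:m2=0}, obtained by identifying the codimension-one stratum of $\overline{\CM}_{k+1}(\CE;\vec\gamma;B)$ with fiber products of lower-dimensional moduli spaces glued along a common asymptotic Reeb chord, with signs dictated by the gluing rule \eqref{eq:ori-gluing}.

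Next I would analyze the moduli spaces $\CM_{m_0,\dots,m_k}(\CE,\vec\gamma;B)$ carrying the extra boundary marked points of Section~\ref{sec:strictification} and their evaluation maps $\mathbf{ev}$, and read off the codimension-one boundary of their stable-map compactifications. The strata are of three types: (i) strip-breaking at a Reeb chord of one of the pairs $(R_{i-1},R_i)$ or of the distinguished pair $(R_0,R_k)$, which also redistributes the $\vec b$-marked points between the two resulting pieces; (ii) disc bubbling along a single arc $\overline{z_{(i-1)i}z_{i(i+1)}}$ carrying only $b_i$-insertions, producing a factor $\mathfrak m_\ell(b_i,\dots,b_i)$; and (iii) collision of two marked points on the same arc, which either cancels in pairs or is absorbed by the de Rham chain model $CI(R_i)=\Omega^*(R_i)$ used for the Morse--Bott self-intersections. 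The tameness hypotheses of Section~\ref{sec:tameness} and the charge-vanishing of \cite{oh:entanglement1} exclude sphere bubbling and bubbling off in the symplectization direction in the relevant energy range.

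Then I would translate this boundary decomposition---equipped with the orientations of Theorem~\ref{thm:fooo-ori2}---into the corresponding algebraic identity among the operators $\mathfrak m_{k;m_0,\dots,m_k}$, substitute $b_i^{m_i}$ into the $i$-th block, and sum over $m_0,\dots,m_k$ and over $B$. By the definition \eqref{mkcorrected}, the type-(i) contributions reassemble precisely into the composition terms $\mathfrak m^{\vec b}_{k+1-k_2}(x_1,\dots,\mathfrak m^{\vec b}_{k_2}(\dots),\dots,x_k)$ on the left of \eqref{Ainftyrel}; the type-(ii) contributions assemble into a factor $\sum_{\ell\ge 0}\mathfrak m_\ell(b_i,\dots,b_i)$, hence vanish because each $b_i$ is a bounding cochain; and the type-(iii) contributions cancel. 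What remains is exactly \eqref{Ainftyrel}. Finally I would verify the Koszul sign $*=i-1+|x_1|+\dots+|x_{i-1}|$ by checking the cases $k=1,2$ directly against \eqref{eq:ori-gluing} and propagating it by the inductive compatibility of the coherent orientations, exactly as in \cite[Chapter~8]{fooo:book2}.

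The hard part will not be the algebra but ensuring that $\overline{\CM}_{m_0,\dots,m_k}(\CE,\vec\gamma;B)$ genuinely carries a Kuranishi structure with corners whose codimension-one strata are precisely those listed, compatibly with $\mathbf{ev}$ and with coherent orientations; in the contact instanton setting this rests on the gluing theorem of \cite{oh:contacton-gluing}, the generic mapping and evaluation transversality of \cite{oh:contacton-transversality}, and the a priori energy and $C^0$ estimates together with the exclusion of symplectization-direction bubbling peculiar to tame contact manifolds. Granting this package, the proof is a transcription of the symplectic argument of \cite{fooo:anchored}.
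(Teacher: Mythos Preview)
Your proposal is correct and follows exactly the route the paper indicates: the paper itself gives no detailed proof but simply states that the result is obtained ``in the same way as the symplectic analog was proved in \cite{fooo:anchored}''. Your outline is in fact considerably more explicit than what the paper provides, but it is precisely the intended transcription of the strictification argument from \cite{fooo:anchored,fooo:book1} to the contact instanton setting, resting on the same analytic package (compactness, gluing, transversality, coherent orientations) that the paper records and cites.
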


We summarize the above discussion as follows. To visualize the Floer theory perspective
in highlight, we write
$$
CI(R,R'): = C(R,R';\K[q,q^{-1}])
$$
for the $\Lambda$-module
$$
C(R,R';\K[q,q^{-1}])\otimes \K[q,q^{-1}]
$$
where $C(R,R';\K[q,q^{-1}])$ is the $\K[q,q^{-1}]$-module
as defined in \cite{fooo:anchored}.   In particular the module $CI(R,R')$
is equipped with the \emph{energy filtration} defined in Definition \ref{defn:action-filtration}.
The operations $\mathfrak m_k^{\vec b}$ are compatible with the filtration in the following sense
which completes construction of a filtered $A_\infty$ category, i.e., filtered Fukaya category
$\mathfrak{Fuk}(M,\omega)$.

\begin{prop}\label{filprod}
If $x_i \in F^{\lambda_i}CI(R,R_{i+1})$,
then
$$
\mathfrak m_k^{\vec b}(x_1,\cdots,x_k)
\in F^{\lambda}CI(R_0,R_k)
$$
where
$
\lambda = \sum_{i=1}^{k} \lambda_i.
$
\end{prop}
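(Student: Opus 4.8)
The plan is to reduce the filtration estimate to the fundamental fact, recorded earlier in the excerpt, that the action $\int \gamma^*\lambda$ of a Reeb chord is the value of the energy of the contact instanton up to the asymptotic contributions, and in particular that the \emph{total} action is additive under gluing of the asymptotic Reeb chords along the punctures. Concretely, I would first observe that it suffices to prove the estimate for each structure constant, i.e. for each homotopy class $B$ and each nonempty moduli space $\CM_{k+1}(\CE;\vec\gamma;B)$ (together with the extra marked-point insertions $\vec b$ coming from the strictification in \eqref{mkcorrected}) contributing to $\mathfrak m_k^{\vec b}(x_1,\dots,x_k)$: if every such contribution lands in $F^{\lambda}CI(R_0,R_k)$ with $\lambda = \sum_{i=1}^k \lambda_i$, then so does the sum, since $F^\bullet$ is a decreasing filtration and the operations are, by the preceding lemma, $\Gamma_{RR'}$-gapped so that only finitely many energy levels occur below any bound.

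Next I would make the energy identity explicit. For a contact instanton $u\colon \dot\Sigma \to M$ of disc type with one positive puncture asymptotic to the output chord $\gamma_{0k}$ and $k$ negative punctures asymptotic to the input chords $\gamma_{01},\dots,\gamma_{(k-1)k}$, Proposition~\ref{prop:proper-energy} and the computation of $E^\pi$, $E^\lambda$, $E$ in terms of the asymptotic periods (the displayed formulas following Proposition~\ref{prop:proper-energy}) give
$$
\int \gamma_{0k}^*\lambda \;=\; \sum_{i=1}^{k}\int \gamma_{(i-1)i}^*\lambda \;+\; E^\pi(u),
$$
since $E^\pi(u) = \sum (\text{positive periods}) - \sum (\text{negative periods})$ and here there is a single positive puncture. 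The key point is $E^\pi(u) \ge 0$, which is immediate from $E^\pi(u) = \tfrac12\int_{\dot\Sigma}|d^\pi u|^2 \ge 0$ (Definition~\ref{defn:pi-energy}); equivalently $E^\pi(u) = \int_{\dot\Sigma} u^*d\lambda \ge 0$ on shell by \eqref{eq:contact-area} together with Proposition~\ref{prop:energy-omegaarea}(2) applied to a solution of $\delbar^\pi u = 0$. The added interior marked points from strictification are absorbed into the boundary of $R_i$ and contribute nothing to the action bookkeeping (the $b_i$ live in $\Lambda_+$, hence only raise energy), so the same inequality persists for $\CM_{k+1}(\CE;\vec\gamma;\vec b;B)$. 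Therefore, whenever the output class $[\gamma_{0k},w_{0k}]$ occurs with nonzero coefficient, one has $\int\gamma_{0k}^*\lambda \ge \sum_{i=1}^k \int\gamma_{(i-1)i}^*\lambda \ge \sum_{i=1}^k \lambda_i = \lambda$, where the second inequality is the hypothesis $x_i \in F^{\lambda_i}CI(R_{i-1},R_i)$ combined with Definition~\ref{defn:action-filtration}. This is exactly the assertion $\mathfrak m_k^{\vec b}(x_1,\dots,x_k) \in F^{\lambda}CI(R_0,R_k)$.

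I expect the main obstacle to be a bookkeeping subtlety rather than an analytic one: one must check that the energy identity used above survives the passage from a single contact instanton $u$ to a general element of the compactified moduli space $\overline\CM_{k+1}(\CE;\vec\gamma;B)$, i.e. to a bordered stable map with disc and sphere bubbles. Here one uses that each component again has nonnegative $\pi$-energy and that the actions of the chords at the (possibly new) breaking punctures telescope, so the total identity $\int\gamma_{0k}^*\lambda = \sum \int\gamma_{(i-1)i}^*\lambda + (\text{sum of }E^\pi\text{ over all components})$ still holds with all $E^\pi$-terms $\ge 0$; this is where the single-valuedness of the contact action (emphasized in Section~\ref{sec:cycle-problem}) does the real work, since it guarantees the chord actions at internal breakings cancel regardless of which capping discs are chosen. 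A second, minor point to verify is compatibility of the energy filtration with the $\K[q,q^{-1}]$-action and with the quotient \eqref{eq:CIR0R1}, but this was already recorded after Definition~\ref{defn:action-filtration} ("this filtration obviously descends"), so I would merely cite it. Modulo these checks, the proof is the two-line estimate above.
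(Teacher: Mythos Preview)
The paper states Proposition~\ref{filprod} without proof; it is asserted as a consequence of the preceding construction and then the paper moves directly to Theorem~\ref{anchoredAinfty}. Your argument --- reducing to a single nonempty moduli space, invoking the identity $E^\pi(u) = \int\gamma_{0k}^*\lambda - \sum_i \int\gamma_{(i-1)i}^*\lambda$ obtained from Stokes together with the Legendrian boundary condition, and then using $E^\pi(u) = \tfrac12\int|d^\pi u|^2 \ge 0$ --- is exactly the expected proof and is correct.

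One caution: the paper's own convention on which puncture is positive is internally inconsistent (compare \eqref{eq:null}, which has $s^- = 1,\ s^+ = k$, with the sentence immediately following it, which declares $z_0$ positive and $z_1,\dots,z_k$ negative). For your inequality to go the right way you need the \emph{output} chord $\gamma_{0k}$ to sit at the positive puncture, so that $E^\pi(u)\ge 0$ reads $\int\gamma_{0k}^*\lambda \ge \sum_i \int\gamma_{(i-1)i}^*\lambda$. This is the convention implicit in the DGA dual picture of Section~10 and in the energy formulas following Proposition~\ref{prop:proper-energy}, so your reading is the right one; just be explicit about it when you write the proof up. Your remarks on stable-map degenerations (telescoping of internal chord actions, nonnegativity of each component's $E^\pi$) and on the $b_i \in \Lambda_+$ only raising energy are also correct and complete the argument.
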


In conclusion, we obatin the following results.

\begin{thm}\label{anchoredAinfty}
We can associate a filtered $A_{\infty}$ category to a
contact manifold $(M,\omega)$ such that:
\begin{enumerate}
\item {\bf Objects:}  $(R, b, \sigma)$ where
$L$ is an oriented Legendrian submanifold equipped with a bounding cochain $[b]
\in {\mathfrak M}(CI(R))$
and  a stable conjugacy class $\sigma$ of relative spin structure $\mathsf{st}$ of $R$.
\par
\item {\bf Morphism spaces:} $CI(R,R')$ (or its homology $HI(R,R')$).
\par
\item {\bf Structure maps:} $\{\mathfrak m^{\vec b}_k\}$ are the operators defined in
$(\ref{mkcorrected})$.
\end{enumerate}
\end{thm}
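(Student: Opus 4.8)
The plan is to assemble the structures built in the preceding sections into a single filtered $A_\infty$ category, following the template of the symplectic Fukaya category in \cite{fooo:anchored, fooo:book1, fukaya:immersed} and its exposition in \cite[Chapter 7]{oh:book-kias}, and to verify the four ingredients such a category requires in the present Legendrian contact-instanton setting: (i) the morphism modules $CI(R,R')$ with the energy filtration of Definition \ref{defn:action-filtration}; (ii) the $\Z/\Gamma_{RR'}$-grading coming from the polygonal Maslov index through Lemma \ref{thm:poly} and Proposition \ref{dim-deg}; (iii) coherent orientations of all the relevant moduli spaces, packaged as the abstract index $\mathfrak{Or}$ of Corollary \ref{cor:fooo-ori2}; and (iv) the $b$-deformed structure maps $\mathfrak m_k^{\vec b}$ of \eqref{mkcorrected}, built from the $\mathfrak m_{k;m_0,\dots,m_k}$. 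The category is then declared as in the statement, with objects $(R,b,\sigma)$, morphisms $CI(R,R')$, and structure maps $\mathfrak m_k^{\vec b}$.

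First I would check that each $\mathfrak m_k^{\vec b}$ is well defined. Finiteness of the matrix coefficients $\#\big(\CM_{k+1}(\CE;\vec\gamma;\vec{\vec b};B)\big)$ for $0$-dimensional moduli spaces is a consequence of the Gromov--Floer--Hofer compactness theorem for bordered contact instantons under Legendrian boundary conditions established in \cite{oh:entanglement1}; that $\mathfrak m_{k,B}$ is $\K$-linear over the coefficient ring (not merely $\Z_{\ge 0}$-graded) follows from the dimension identity \eqref{misdeg1} of Proposition \ref{dim-deg}; and gappedness of the family $\{\mathfrak m_{k,B}\}$ with respect to the energy filtration --- hence convergence of the $B$-sum over $\K[q,q^{-1}]\,\widehat\otimes\,\Lambda_{\mathrm{nov}}$ --- is guaranteed by the positivity of the period gap $T_\lambda(M;R)$ from \eqref{eq:TMR} together with the action--energy identities collected in Section \ref{sec:offshellenergy}, which bound energy below by a positive multiple of the word length. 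With this in hand, the $A_\infty$ relation \eqref{Ainftyrel} for the $b$-deformed operators is obtained exactly as in \cite{fooo:anchored}: one describes the codimension-one boundary of the $1$-dimensional components of $\overline{\CM}_{k+1}(\CE;\vec\gamma;B)$ (with the extra marked points absorbing the bounding cochains $b_i$) as a union of fiber products of lower moduli spaces, reads off the algebraic identity, and matches signs using the orientation-gluing rules of Theorem \ref{thm:1-puncture-ori}, Theorem \ref{thm:fooo-ori2} and Corollary \ref{cor:fooo-ori2}; the Maurer--Cartan property $\sum_k \mathfrak m_k(b,\dots,b)=0$ of each $b_i$ is precisely what kills the curvature contributions and turns the curved relation into the classical one.

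Next I would verify strict unitality and the filtration compatibility. Strict units are produced by the standard two-step procedure: in the Morse--Bott model $CI(R)=\Omega^*(R)$ (or $CM^*(f;R)$) for the self-chord part, the fundamental class is a homotopy unit, and one then strictifies the homotopy-unital filtered $A_\infty$ category into a strictly unital one as in \cite[Chapter 7]{oh:book-kias}, \cite{fooo:book1, fukaya:immersed}. Compatibility of the $\mathfrak m_k^{\vec b}$ with the energy filtration is Proposition \ref{filprod}, which --- combined with the single-valuedness of the action functional $\gamma\mapsto\int\gamma^*\lambda$ on Legendrians --- shows that filtration levels simply add under composition. Assembling (i)--(iv) together with these properties yields the asserted filtered $A_\infty$ category, completing the proof.

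The main obstacle, beyond the analytic input cited wholesale from \cite{oh:entanglement1} and \cite{oh:contacton-transversality} (a priori estimates, exponential decay, Fredholm theory, transversality, and the bubbling compactification for bordered Legendrian contact instantons), is the bookkeeping of coherent orientations across the whole tower of moduli spaces and the verification that they genuinely assemble into an abstract index with the monoid codomain of Definition \ref{defn:abstract-index}. The subtlety is that the relevant domains are the ``pentagons'' of Remark \ref{rem:pentagon} rather than squares, and one must check that the gluing of determinant lines in \eqref{summedupindex} is associative and independent of the chosen bridges $\ell_{ij}$ and bounding squares $w_{ij}$, so that $\mathfrak{Or}$ of Corollary \ref{cor:fooo-ori2} is well defined and stays compatible with the stabilizations by extra marked points used in the strictification. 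The second, more routine but still delicate, point is that the homotopy-unit strictification must be carried out while keeping the energy filtration and the $\Z/\Gamma_{RR'}$-grading intact; this is where most of the remaining work lies, and it proceeds by transcribing the symplectic argument of \cite{fooo:anchored} essentially verbatim.
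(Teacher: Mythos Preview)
Your proposal is correct and follows the same approach as the paper: the theorem is stated there as a summary (``In conclusion, we obtain the following results'') with no separate proof, the argument being precisely the assembly of the preceding constructions---the deformed maps \eqref{mkcorrected}, the $A_\infty$ relation \eqref{Ainftyrel} obtained from the boundary structure of the compactified moduli spaces together with the Maurer--Cartan condition on the $b_i$, the filtration compatibility of Proposition \ref{filprod}, and the orientation package of Theorems \ref{thm:1-puncture-ori}, \ref{thm:fooo-ori2} and Corollary \ref{cor:fooo-ori2}. Your write-up is in fact more explicit than the paper's own treatment; the one point on which you elaborate beyond what the paper actually carries out is the homotopy-unit strictification, which the paper announces at the start of the part but does not spell out in the body.
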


\bibliographystyle{amsalpha}

\bibliography{biblio2}

\end{document}